\newcommand{\cal}{\mathcal}
\newcommand{\FF}{{\cal F}}
\newcommand{\BR}{{\mathbb R}}
\newcommand{\si}{\sigma}
\newcommand{\om}{\omega}
\newcommand{\E}{{\mathbb E}}
\newcommand{\bbE}{{\mathbb E}}
\newcommand{\R}{{\mathbb R}}
\newcommand{\bbR}{{\mathbb R}}
\newcommand{\al}{\alpha}
\newcommand{\la}{\lambda}
\newcommand{\bn}{\bf n}
\newcommand{\eps}{\epsilon}
\newcommand{\ga}{\gamma}
\newcommand{\essinf}{\mathop{\mathrm{ess\,inf}}}
\newtheorem{theorem}{\bf Theorem}[section]
\newtheorem{proposition}[theorem]{\bf Proposition}
\newtheorem{lemma}[theorem]{\bf Lemma}
\newtheorem{corollary}[theorem]{\bf Corollary}
\theoremstyle{definition}
\newtheorem{definition}[theorem]{Definition}
\newtheorem{remark}[theorem]{Remark}
\numberwithin{equation}{section}
\begin{document}

\title[Maximum principles and    Hopf  lemma for integro-differential operators]{On the maximum principles and the quantitative version of the Hopf  lemma for
   uniformly elliptic integro-differential operators}

\author {Tomasz Klimsiak  
\and  Tomasz Komorowski}

\address[Tomasz Klimsiak]{Institute of Mathematics, Polish Academy Of Sciences,
ul. \'{S}niadeckich 8,   00-656 Warsaw, Poland, \and Faculty of
Mathematics and Computer Science, Nicolaus Copernicus University,
Chopina 12/18, 87-100 Torun, Poland, e-mail :{\tt tomas@mat.umk.pl}}

\address[Tomasz Komorowski]{Institute of Mathematics,
  Polish Academy Of Sciences, ul. \'Sniadeckich 8, 00-636 Warsaw, Poland
e-mail:{ \tt tkomorowski@impan.pl}}

\date{}
\maketitle
\begin{abstract}
In the present paper we prove   estimates on
{subsolutions of the equation $-Av+c(x)v=0$}, $x\in D$, where $D\subset \bbR^d$ is a
domain (i.e. an open and connected set) and $A$ is an
integro-differential operator of the Waldenfels type,  whose
differential part  satisfies the uniform
ellipticity condition on compact sets.  In general,  the coefficients of the
operator need not be continuous but only bounded and Borel measurable. Some of our results may be
considered      ''quantitative'' versions of the Hopf lemma, as they provide
the lower bound 
on  the outward normal directional derivative at the maximum point of
a subsolution 
in terms of its value at the point. We
shall also show lower bounds on the subsolution around its maximum
point by the principal eigenfunction associated with $A$ and the domain.  Additional results, among them the weak and
strong maximum principles, the weak Harnack inequality are also proven.
\end{abstract}

\footnotetext{{\em Mathematics Subject Classification:}
Primary  35B50; Secondary 35J15 , 35J08}

\footnotetext{{\em Keywords:} Integro-differential elliptic operator, maximum principle, the Hopf lemma, principal eigenvalue and eigenvector}


\section{Introduction}
\label{sec1}

The maximum principle and the Hopf lemma express some of the most fundamental properties of
solutions of elliptic partial differential equations. 
Consider a     differential  operator
\begin{equation}
\label{L}
Lu(x):=\frac12\sum_{i,j=1}^dq_{i,j}(x)\partial_{x_i,x_j}^2u(x)+\sum_{i=1}^db_{i}(x)\partial_{x_i}u(x),\quad
u\in C^2(\bbR^d)
\end{equation}
that is {\em uniformly elliptic} on compact
  sets, i.e. the coefficients
 $q_{i,j} $, $b_j\ $,  $i,j=1,\dots,d$ are bounded, Borel measurable
  and 
  the matrix 
${\bf
  Q}(x)=[q_{i,j}(x)]_{i,j=1}^d$  is {\em uniformly positive
definite} on compact sets, see \eqref{la-K} below.
 
{The strong
maximum principle, see e.g. \cite[Theorem 9.6, p. 225]{gilbarg}, can
be formulated as follows. 
Suppose that $D$ is a   domain (an open and connected set) and $c$ is
a non-negative, bounded and measurable function on $
D$. Assume
furthermore that  a
continuous function $u$, belonging to $W^{2,d}_{\rm
  loc}(D)$ (i.e. its two generalized derivatives 
are $L^d$-integrable on compacts) and satisfying
\begin{equation}
\label{010610-20}
-Lu(x)+c(x)u(x)\le0,\quad x\in D
\end{equation}
 admits a   maximum $M\ge0$ in $D$. Then, $u$ has to be constant,
 i.e. $u(x)= M$, $x\in D$. A function satisfying \eqref{010610-20} is
 called a {\em subsolution} of the equation $(-L+c)f=0$ in $D$.}

If $\partial D$ - the boundary of the domain - is
sufficiently regular, say $C^2$ smooth and $u$ is non-constant, then
the {\em Hopf lemma}, see
e.g. \cite[Theorem 7, p. 65]{weinberger}, asserts that $\partial_{\bf
  n}u(\hat x)$ - the  directional
derivative of $u$ in the outward normal to $\partial D$  - at the
maximal point $\hat x$ has to be strictly positive.

It has been shown in \cite{boko20} that if  $\partial D$ is $C^2$
smooth and $\underline c:=\inf_{x\in D} c(x)>0$, then  there exists a
constant $a>0$, depending only on the ellipticity constant on $\bar D$, $\underline c$, the $L^\infty$ bound on the
coefficients of $L$ and $c$, and $D$ such that
\begin{equation}
\label{quant-hopf}
\partial_{\bf
  n}u(\hat x)>a u(\hat x)
\end{equation}
for any $u$ satisfying  \eqref{010610-20} that is not identically equal to $0$.
The above estimate  could be  viewed as a form of a {\em
  quantitative Hopf lemma}.
An elementary example, using harmonic functions, shows that \eqref{quant-hopf} is
false if $ c\equiv 0$. 

One of the objectives of the present paper is to investigate an
analogue of the estimate \eqref{quant-hopf} and its  generalizations  for non-local operators.
More precisely, we consider non-divergence form integro-differential
operators  
\begin{equation}
\label{011703-20}
Au(x)=Lu(x)+Su(x),\quad u\in C^{2}(\BR^d)\cap C_b(\BR^d),\,x\in\BR^d,
\end{equation}
where $L$ is given by \eqref{L}
and 
\begin{equation}
\label{S}
Su(x):=\int_{\BR^d}\left(u(x+y)-u(x)-\sum_{i=1}^d\frac{y_i\partial_{x_i}u(x)}{1+|y|^2}\right)N(x,dy),\quad x\in\BR^d.
\end{equation}
The coefficients of the operator $L$ are as in the foregoing and the
kernel $N$ (sometimes referred to as the L\'evy kernel) assigns to each $x\in\bbR^d$   a
$\si$-finite Borel measure $N(x,\cdot)$, which  satisfies  
\begin{equation}
\label{Nx1}
N_*:=\sup_{x\in \bbR^d}\int_{\bbR^d} \min\{1,|y|^2\} N(x,dy)<+\infty.
\end{equation}
Such operators are sometimes called of the  {\em Waldenfels type},
see e.g. \cite[Chapter 10]{Taira2}. A sufficiently regular function
$u:D\to\bbR$, see Section \ref{sec2.3a} for a rigorous definition, is
called a {\em subsolution (resp. supersolution)}   of  equation
\begin{equation} 
\label{eq3.1a}
(-A+c)f=0.
\end{equation}
 if $-Au(x)+c(x)u(x)\le 0$ (resp. $-Au(x)+c(x)u(x)\ge 0$) in
 $D$. Throughout the paper we shall always assume that $c:D\to\bbR$
 is nonegative,  bounded and Borel measurable.

Let
\begin{equation}
\label{cSD}
\mathcal S(D):=\bigcup_{x\in D}\mathcal S_x, \quad \mbox{where $\mathcal S_x:=\big[x+y: y\in \mbox{supp}\, N(x,\cdot)\big].$}
\end{equation} 
The set $\mathcal S_x$ describes the "range of non-locality" of $A$ at
$x$, i.e. the evaluation of $Au(x)$
depends on the values of $u$ in an arbitrarily small neighborhood of $x$, and
in $\mathcal S_x$. 

Then, the following version of the ''quantitative Hopf lemma'' holds,
see Theorem \ref{thm:main1a} below for a  precise
statement.
\begin{theorem} 
\label{thm:main1a-informal}
Suppose that   $\underline c>0$, 
 the domain $D$ is bounded and satisfies the  uniform interior
ball condition (see Definition \ref{def2.3}), $u$ - a subsolution of
\eqref{eq3.1a} - attains its  maximum  over $\mathcal
S(D)\cup \overline{D}$
at some $\hat x\in \partial D$  and is not identically equal to $0$. Then, there exists $a>0$ that depends
  only on $D$, ellipticity constant,
     $\underline c$, the $L^\infty$ norm of the coefficients and $N_*$
such that estimate
\eqref{quant-hopf} holds
\end{theorem}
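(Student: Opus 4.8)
The plan is to mimic the classical Hopf-lemma barrier construction, adapted to the presence of the nonlocal term $S$. Near the maximum point $\hat x\in\partial D$ the uniform interior ball condition gives a ball $B=B(x_0,r)\subset D$ with $\hat x\in\partial B$. On the annulus $B(x_0,r)\setminus B(x_0,r/2)$ I would build an explicit barrier of the form $w(x)=e^{-\beta|x-x_0|^2}-e^{-\beta r^2}$, which vanishes on $\partial B(x_0,r)$ and is strictly positive on the inner sphere. For a purely differential operator one chooses $\beta$ large enough that $Lw-c(x)w>0$ on the annulus; the point is that the second-order term $\tfrac12 q_{ij}\partial^2_{ij}w$ dominates once $\beta$ is large, using uniform ellipticity on $\bar D$ and the $L^\infty$ bounds on the coefficients. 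Then $v:=u-u(\hat x)$ is a subsolution of $(A-c)$ that is $\le 0$ on $\mathcal S(D)\cup\bar D$ (since $\hat x$ is the maximum over that set) and $v(\hat x)=0$; comparing $v$ with $\delta w$ on the annulus for a suitably small $\delta>0$ and using the maximum principle gives $v\le \delta w$ there, and evaluating the normal derivative at $\hat x$ yields $\partial_{\bf n}u(\hat x)\ge -\delta\,\partial_{\bf n}w(\hat x) = \delta\cdot 2\beta r\,e^{-\beta r^2}>0$.

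The nonlocal term is where real care is needed, and I expect it to be the main obstacle. When I apply $A$ to the barrier $w$ on the annulus I also pick up $Sw(x)=\int (w(x+y)-w(x)-\ldots)N(x,dy)$; here $w$ must be understood as extended by a convenient formula (e.g. the same Gaussian expression) to all of $\bbR^d$, and the integrability is controlled by \eqref{Nx1} together with $C^2$-bounds on the extended $w$ uniformly in $\beta$ over the relevant region — but the catch is that the second-order difference bound for the Gaussian degrades with $\beta$, so $Sw$ could be of order $\beta$ or worse, competing with the order-$\beta^2$ gain from $Lw$. The resolution is to split $N(x,dy)$ into the small-jump part $|y|\le\rho$, whose contribution is bounded by $N_*\sup|D^2 w|\lesssim N_*\beta^2 e^{-\beta r^2}$ times something that can be absorbed, and the large-jump part $|y|>\rho$, whose contribution involves $w(x+y)$; since the extended $w$ is bounded (say by $1$) and $N_*$ controls $\int_{|y|>\rho}N(x,dy)\le N_*/\rho^2$, this part is $O(1)$ uniformly in $\beta$, hence dominated by the $\beta^2$ term for $\beta$ large. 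One must also handle the drift-correction term $\sum_i y_i\partial_{x_i}w(x)/(1+|y|^2)$, which is $O(\beta e^{-\beta r^2})$ in the same way as the $L$-drift. After this bookkeeping, $\beta$ can be fixed (depending only on the allowed data: $D$, the ellipticity constant, $\underline c$, the $L^\infty$ norms, and $N_*$) so that $(A-c)w>0$ on the annulus.

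The second point requiring attention is that the comparison argument must use the maximum principle in the nonlocal setting, i.e. the version established earlier in the paper (the weak maximum principle for $A-c$ with $\underline c>0$), which requires that $v-\delta w\le 0$ not merely on the boundary of the annulus but on the full nonlocal "exterior" $\big(\mathcal S(\text{annulus})\cup\overline{\text{annulus}}\big)\setminus\text{annulus}$. On the outer sphere $v\le 0=\delta w$ because $\hat x$ is the global max over $\mathcal S(D)\cup\bar D$; on the inner sphere we choose $\delta$ small so that $\delta w \ge v$ there (possible since $w>0$ there and $v$ is bounded); and on points of $\mathcal S$ landing outside the annulus, $v\le 0$ while the extended $w$ should be arranged $\ge 0$ there — this is a constraint on how we extend $w$ outside $B(x_0,r)$, and one should extend it by a nonnegative, bounded, $C^2$ function (for instance keep the Gaussian formula, which stays in $(0,1]$, or truncate smoothly). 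With the extension chosen nonnegative and bounded, all the estimates above go through.

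Finally, I would assemble the constant: $\beta$ is fixed from the drift/ellipticity/$N_*$ data, $\delta$ is fixed from $\beta$, the radius $r$ of the interior ball (part of the data of $D$), and an a priori bound on $\sup_{\mathcal S(D)\cup\bar D}(u(\hat x)-u)$ in terms of $u(\hat x)$ — here I would invoke the strong maximum principle / the lower bounds by the principal eigenfunction proven earlier to control $\inf u$ from below in terms of $u(\hat x)$, or argue by homogeneity (the estimate is scale-invariant in $u$, so one may normalize $u(\hat x)=1$ and then needs a uniform upper bound on $-\inf u$, which follows from the weak maximum principle since $u\le u(\hat x)$ already). Then $a:=2\beta r e^{-\beta r^2}\delta_0$ with $\delta_0$ the normalized value of $\delta$ works, depending only on the stated data, which is exactly \eqref{quant-hopf}. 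The main obstacle, to restate, is making the nonlocal contribution $Sw$ to the barrier harmless — controlling it by splitting jumps and using \eqref{Nx1}, and simultaneously ensuring the barrier's extension is nonnegative so the nonlocal maximum principle applies.
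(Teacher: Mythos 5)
Your barrier construction and comparison argument are essentially the paper's own route to the \emph{ordinary} Hopf lemma (part 2 of Theorem \ref{strong-max} together with Lemma \ref{lm010204-20a}): the annulus, the function $e^{-\beta|x-\bar y|^2}-e^{-\beta r^2}$, the small/large jump splitting of $S$ controlled by \eqref{Nx1}, and the nonlocal weak maximum principle on the annulus are all there, and that part of your proposal is sound. But what that argument delivers is
$\underline{\partial}_{\bf n}u(\hat x)\ge a\,\inf_{x\in D_{r/2}}\bigl(u(\hat x)-u(x)\bigr)$,
i.e.\ \eqref{010104-20a}, not \eqref{quant-hopf}. The entire content of the present theorem is the conversion of $\inf_{D_{r/2}}(u(\hat x)-u)$ into $a'\,u(\hat x)$, and your proposal does not contain a working mechanism for it. After normalizing $u(\hat x)=1$ you need $\sup_{D_{r/2}}u\le 1-a'$ for a constant $a'>0$ depending only on the structural data; instead you ask for ``a uniform upper bound on $-\inf u$'' (a lower bound on $u$), which is the wrong quantity and the wrong direction, and the weak maximum principle only gives $u\le 1$ with no gap. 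The strong maximum principle gives $u<1$ in $D$ but with no quantitative modulus, and the principal-eigenfunction bounds (Theorems \ref{pee1}--\ref{pee}) would introduce constants involving $\varphi_D$ and $\la_D$ that are not controlled by $r_D$, $\la_{\bar D}$, $M_A$, $\|c\|_\infty$ and $\underline c$ alone — that route yields the weaker Theorem \ref{thm:main1}, not this one. Note also that the claim is genuinely false for $c\equiv 0$, so whatever supplies the gap must use $\underline c>0$ in an essential, quantitative way; your proposal never does.

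The paper's mechanism is probabilistic: by the Feynman--Kac formula (Proposition \ref{prop010204-20}) a subsolution satisfies $u(x)\le \E_x\bigl[e_c(\tau_D)u(X_{\tau_D})\bigr]\le u(\hat x)\,\E_x e_c(\tau_D)$, hence
$u(\hat x)-u(x)\ge u(\hat x)\,w_{c,D}(x)$ with $w_{c,D}(x)=1-\E_x e^{-\int_0^{\tau_D}c(X_s)\,ds}\ge 1-\E_x e^{-\underline c\,\tau_D}\ge (1-e^{-\underline c\,t})P_x(\tau_D>t)$. Lemma \ref{lm010704-20y} then bounds $P_x(\tau_{B(x,r)}>t)$ from below, uniformly over $x\in D_{r/2}$, by a martingale estimate with a smooth bump function whose constants involve only $M_A$; this yields $\inf_{D_{r/2}}w_{c,D}\ge a_*$ with $a_*$ depending only on the allowed data, and Lemma \ref{lm010204-20z} transfers this lower bound to the whole annulus by comparing $w_{c,D}$ with the barrier. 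If you want to keep your analytic framing, the missing step is precisely a quantitative lower bound on $1-v_{c,D}$ over the inner ball — some supersolution or exit-time argument exploiting $\underline c>0$ — without which the constant $\delta$ in your comparison cannot be tied to $u(\hat x)$.
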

We can relax the assumption that $\underline c>0$. Namely, see Theorem
\ref{thm:main1} below, we   admit that   
$\underline c=0$ but $c(x)$
does not vanish a.e. in $D$. However, in that case we have to allow $a>0$ in
\eqref{quant-hopf}  to depend
also on the
operator $A$ (in particular  on the lower bound of its Green
function), not just on some characteristics of its coefficients as in
Theorem \ref{thm:main1a-informal}.

In our further generalizations of estimate \eqref{quant-hopf}, see Theorems
\ref{pee1} and \ref{pee} below, we compare the increment  $u(\hat
x)-u(x)$ with the value of the principal eigenfunction $\varphi_D(x)$ associated
with the operator $A$ on the domain $D$, i.e. the unique  normalized eigenfunction
that is strictly positive in $D$, see Theorem
\ref{thm020809-20}. It turns out (see Theorem \ref{pee}) that, with no 
assumption on the regularity of $D$,  the following bound holds
\begin{equation}
\label{012804-20zz}
u(\hat x)-u(x)\ge \frac{\varphi_D(x)}{2e
  \|\varphi_D\|_\infty }\left[\frac{\underline
  cu(\hat
x)}{\la_D+\underline c}+\frac{{\rm essinf}_D(Au-cu)}{\la_D+\|c\|_\infty}\right],\quad x\in D.
\end{equation} 
Here $\la_D$ is the principal eigenvalue corresponding to $\varphi_D$
and $\|\cdot\|_\infty$ is the usual supremum norm.

In fact, instead of the essential infimum of $Au-cu$ in \eqref{012804-20zz}, we can use its
average over $D$ with respect to some measure equivalent with   the
Lebesgue measure. Namely, see Theorem
\ref{pee1}, there exist $\psi(x)$   continuous,
strictly positive in $D$ and $\chi(x)$  strictly positive, Borel
measurable  such that any subsolution $u,$ described in the
foregoing, satisfies
\begin{equation}
\label{012804-20-1zz}
u(\hat x)-u(x)\ge \frac{\underline
  c\varphi_D(x) u(\hat
x)}{2e
  \|\varphi_D\|_\infty (\la_D+\underline c)}+\psi(x)\int_D(Au-cu)\chi\,dx,\quad x\in D.
\end{equation}
We have already mentioned that estimates
\eqref{012804-20zz} and \eqref{012804-20-1zz} are obtained without
making assumptions on the regularity of the domain $D$. However, if we suppose that
  both the uniform interior and exterior ball conditions hold, see
Definitions \ref{def2.3} and \ref{def3.1z}, then we can choose
$\psi(x)=\delta_D(x)$, where $\delta_D(x) :={\rm dist}(x,D^c)$, see
Theorem \ref{cor.d21.20.1}. Furthermore, if  some  
regularity of the coefficients of $A$ is allowed, then,  in addition
to the above choice of $\psi(x)$,  we
can admit also $\chi(x)=a\delta_D(x)$, with some $a>0$, see Theorems
\ref{cor.d21.20.1a} and \ref{thm013009-20}.

It is also worthwhile to mention that the estimates  
\eqref{012804-20zz}
and \eqref{012804-20-1zz}  hold even  without  the uniform
ellipticity hypothesis. They  are related to ergodic properties  of the
Markov process,   associated with the operator $A$ via the respective
martingale problem, see Section \ref{sec4} below, e.g. its uniform
conditional ergodicity, or
 intrinsic ultracontractivity of its transition semigroup. This shall be the topic of  our forthcoming paper \cite{KK20a}.

In addition to the results mentioned above, in
the present paper  we  also obtain a
weak maximum principle for operators of the form \eqref{011703-20},
see Proposition \ref{prop012804-20}. Namely, if $u$ is a subsolution
of \eqref{eq3.1a} in $D$, then 
\begin{equation}
\label{032804-20zz}
\sup_{x\in D}u(x)\le  \sup_{y\in\mathcal (S(D)\setminus
  D)\cup \partial D}u^+(y).
\end{equation}
This result holds without assuming that $D$ is connected.
Using the weak form of the maximum principle we establish its strong
version and the Hopf lemma, see Theorem \ref{strong-max}. In addition
we prove also the Bony type maximum principle and {a form} of the weak Harnack
inequality, see    Theorems
\ref{thm:main3a} and \ref{thm:main4a}, {respectively}. 
{The
  latter asserts the existence of  a strictly positive, bounded, Borel measurable
  function $\chi$ on $D$ such that
 for any open $V$, with $\bar V$  compact and  $\bar V \subset
 D$,  there exists a constant $C>0$   such that
for any  $u$ - a  non-negative supersolution to \eqref{eq3.1a} -  we have
\begin{equation}
\label{830104-20aabba}
\inf_{x\in V} u(x)\ge C \int_V\chi u\,dx.
\end{equation} 
In the case when $D$ is regular, e.g. satisfies both the interior 
and exterior ball conditions (see Definitions \ref{def2.3} and
\ref{def3.1z}) and  the formal adjoint to $A$ satisfies appropriate
hypotheses, see Section \ref{sec5.2z}, one can take $\chi(x)=\delta_D(x)$, $x\in
D$, see Remarks \ref{rmk9.3} and \ref{rmk9.4} below, and conclude the
''usual'' weak Harnack inequality, see e.g. \cite[Theorem 8.18,
p. 194, or Theorem 9.22, p.246]{gilbarg},
\begin{equation}
\label{830104-20aabbb}
\inf_{x\in V} u(x)\ge C\int_Vu\,dx.
\end{equation}}

{Recall that  the  (strong) Harnack inequality, asserts the existence of $C>0$, for which   
$\inf_{x\in V} u(x)\ge C \sup_{x\in V} u(x)$ for  all non-negative supersolutions. This type of inequality
need not hold in general for operators of the type
\eqref{011703-20} and for its validity some 
  additional restrictions on the kernel $N$ are needed, see \cite[Section
7]{Foondun} for more details}.

Let us briefly discuss the relation between our work and some  
results  existing in the
literature. The maximum principles both weak and strong, together with
the Hopf lemma can be found   e.g. in  \cite[Section I.4]{BCP}, \cite[Section 10.2]{Taira2} and
\cite[Appendix C]{Taira6}. However,  the versions of the maximum
principles we are aware of  do not cover the case
considered in our paper.  For example, the formulations of
\cite{Taira2,Taira6} concern only the case  when coefficients are
$C^\infty$ smooth, while \cite{BCP} requires some regularity
assumptions on the L\'evy kernel, see Section I.2.3 and condition
(I.4.1) ibid.    {The closest formulation
to part 1) of our Theorem \ref{strong-max},   we have encountered
in the
recent  paper of Taira \cite{Taira5}. In ibid.  the author has additionally
assumed that  ${\cal S}_x(D)=\overline D$ for $x\in D$ (no jumps
outside $\overline D$) and that there exists a reference measure, with respect to which, all
the L\'evy kernels are absolutely continuous.} The  
 regularity of the resulting densities is also assumed.
The domain $D$ is to be of class $C^{1,1}$ and $u\in W^{2,p}(D)$ for some
 $p>d$. 

Our formulation of  the Hopf lemma, see part 2) of Theorem
\ref{strong-max} and in particular Corollary \ref{thm:harnack},
resembles somewhat  
 Proposition 1.34 of \cite{hl}, which actually uses the phrase
 ''quantitative Hopf lemma''.
The result of ibid.  holds     for harmonic
functions on a ball $B$    and
asserts that there exists $a>0$ such
that for any    bounded harmonic function $u$ in $B$, we have 
\begin{equation}
\label{eqi.3}
\partial_{\mathbf n}u(\hat x)\ge a(u(\hat x)-u(x_0))
\end{equation}
with $\hat x\in \partial B$ a maximum point of $u$ in $\bar B $ and
$x_0$ - the center of the ball.
The counterpart of this
estimate for   $A$-harmonic functions, provided that the operator has
the Harnack property (see Definition \ref{hnk}), is stated in
  \eqref{010104-20b} below.



{Estimate  \eqref{012804-20-1zz} of Theorem \ref{pee1}} can be compared with the Morel and Oswald    version of the 
Hopf lemma proved by Brezis and
Cabr\'e in  \cite{BC}, see Lemma 3.2.  It asserts that for a smooth and
bounded domain $D$ there exists
$a>0$   such that for any   {superharmonic} function  $u$,
vanishing on the boundary, we have
\begin{equation}
\label{eqi.5}
 {u(x)\ge a\delta_D(x) \int_D(-\Delta u)(y) \delta_D(y)\,dy},\quad x\in D.
\end{equation}
The estimate is sometimes referred to as  the {\em uniform Hopf
lemma} (see e.g. \cite{BDR,dmo,op}).  

We prove an analogue of
\eqref{eqi.5}, see Theorem \ref{thm013009-20}, for $C^2$-regular
 domains,   $u(x)$ - a subsolution of \eqref{eq3.1a} - and
operator $A$ of the form \eqref{011703-20}, whose coefficients  {of}
the second derivative, as
well as the respective coefficients of its formal adjoint belong to  the VMO class,
see Definition \ref{df5.6}.
From \eqref{eqi.5} one can easily conclude the {\em weak Harnack
  inequality} (see e.g. \cite[Theorem 9.22]{gilbarg}) that is shown in
the present paper in Theorem \ref{thm:main4a}.

The principal tools used in our analysis are: the martingale
problem associated with operator $A$, see Section \ref{SMM}, and the Feynman-Kac
formula for the respective canonical process $(X_t)_{t\ge0}$, see Proposition \ref{prop010204-20}. They allow us to obtain
a probabilistic representation for a subsolution of the equation
\eqref{eq3.1a}, see Definition \ref{df3.1}, in terms of the expectation
of an appropriate functional of the process, see
\eqref{010204-20dfg}. We exploit various properties the process
$(X_t)_{t\ge0}$, such as its irreducibility and the strong Feller property, to derive our results.

{The body   of the paper is essentially divided into two
  parts.  The first one, consisting of Sections \ref{sec2} to 
\ref{sec5z}, besides presenting some introductory material,  is
primarily devoted to the exposition of the results contained in
the article. Their proofs have been placed in the second part,
containing Sections \ref{sec6} through   \ref{sec5}.   In more detail, in Section 2 we introduce
the basic notation and definitions.     The   formulations of the
main results are contained in Sections \ref{sec3} and
\ref{sec5z}. In Section \ref{sec4}  we introduce some basic probabilistic
tools used  throughout the  paper, such as: the already
mentioned  
martingale problem associated with $A$, the (probabilistic) semigroup
and resolvent corresponding to the canonical process, the gauge
functional and the principal eigenvalue and eigenfunction associated
with $A$ and domain $D$. They are used in formulations of the results
of Section \ref{sec5z}. In Section \ref{sec6} we present the proof of
the Feynman-Kac formula, which is our basic tool in showing of
virtually  all the results of  the paper. We have also placed in that
section the proof of the weak maximum principle (Proposition
\ref{prop012804-20}) as it almost directly follows from the
aforementioned formula.  Section \ref{sec5.1} discusses various
properties of the resolvent that will be useful in our further
arguments. Section \ref{sec8} contains   auxiliary facts and
notions used in the proofs of our main results. Finally,  
the main results are proved in Section \ref{sec5}}.

\section{Preliminaries}
\label{sec2}

\subsection{Basic notation}

Given   a metric space $E$  we denote by ${\cal B}(E)$ its Borel $\si$-algebra. Let
$B_b(E)$ ($B^+_b(E)$) be the space of all (non-negative) bounded Borel
measurable functions and let $C_b(E)$ ($C_c(E)$) be the space of all
bounded  continuous (compactly supported) functions on $ E$.
Furthermore by ${\cal M}(E)$ we denote the set of all Borel measures
on $E$. 
Suppose that $\mu,\nu\in {\cal M}(E)$. We say that $\mu$ dominates
$\nu$ and write $\nu\ll\mu$ if all null sets
for $\mu$ are also null for $\nu$.
We say that measures are equivalent and write $\mu\sim\nu$ iff
$\mu\ll\nu$ and $\nu\ll\mu$.

Given a point $x\in E$ and $r>0$ we 
let $B(x,r)$ be the open ball of radius $r$ centered at $x$ and 
$\bar B(x,r)$ be its closure.
As it is customary
for a given function $f:E\to\R$ we denote  $\|f\|_\infty=\sup_{x\in E}|f(x)|$. 
For a subset $B\subset E$ we let $B^c:= E\setminus B$ and $\bar B$
be its closure.

Suppose that $B$ is an arbitrary set. For functions $f,g:B\to[0,+\infty)$
we write $f\preceq g$ on $B$ if there exists number $C>0$, i.e., \textit{constant}, such that
$$
f(x)\le Cg(x),\quad x\in B. 
$$
Furthermore, we write $f\approx g$ if both $f\preceq g$ and $g\preceq f$.

Assume now that $D\subset \bbR^d$ and $V\subset D$ are open. We shall write
$V\Subset D$ if $\bar V$ is compact and $\bar V\subset D$.
Furthermore we 
let $C^m(D)$, $m\ge1$ be the class of $m$-times
continuously differentiable functions in $D$. By $C_0(D)$ we denote
the subset of  $C(D)$ that consists of functions extending
continuously to $\bar D$ by letting $f(x)\equiv0$, $x\in\partial D$ -
the boundary of $D$.

Denote by $m_d$ the $d$-dimensional Lebesgue measure on
$\bbR^d$. Sometimes we omit writing the subscript, when the dimension
is obvious from the context. By $dx$ we denote the volume element
corresponding to the Lebesgue measure. {A function $f\in B^+(D)$ is
called non-trivial if $\int_Df dx>0$.}

For $p\in[1,+\infty)$ we denote by  $L^p(D)$  ($L^p_{\rm loc}(D)$) the space of functions
that are integrable with their $p$-th power on 
$D$ (any compact subset of $D$). By  $W^{k,p}(D)$
($W^{k,p}_{\rm loc}(D)$) we denote the Sobolev space of
functions whose $k$ generalized derivatives belong to $L^p(D)$
($L^p_{\rm loc}(D)$).  Given $f\in L^p(D)$ and $B\in {\cal B}(D)$ we
denote by $\essinf_{B}f$ the usual essential infimum of $f$ in
$B$. We define  the {\em essential limit inferior} of $f$
at  $x_0\in D$ as
\begin{equation}
\label{essinf}
\liminf_{x\rightarrow x_0} {\rm ess}
\,f(x):=\lim_{r\to0+}\essinf_{B(x_0,r)\subset D}f.
\end{equation}


\subsection{Second-order, elliptic integro-differential operators}

\label{sec2.1}

Consider
 an elliptic  integro-differential operator of
 the form
\begin{equation}
\label{011703-20i}
Au(x)=Lu(x)+Su(x),\quad u\in C^{2}(\BR^d)\cap C_b(\BR^d),\,x\in\BR^d,
\end{equation}
where $L$, $S$ are   a differential and
integro-differential operators, given by \eqref{L} and \eqref{S} respectively.
Throughout the paper we shall always assume that the  coefficients satisfy the following hypotheses:
\begin{itemize}
\item[A1)] $q_{i,j}\in B_b(\BR^d)$, $b_j\in B_b(\BR^d)$,  $i,j=1,\dots,d$ and 
  we suppose that the differential operator $L$ is {\em uniformly elliptic} on compact
  sets, i.e. 
  the matrix 
${\bf
  Q}(x)=[q_{i,j}(x)]_{i,j=1}^d$  is {\em uniformly positive
definite} on compact sets. The latter means that for any compact set $K\subset \R^d$
there exists $\ga_K>0$ such that 
 \begin{equation}
\label{la-K}
\ga_K|\xi|^2\le \sum_{i,j=1}^dq_{i,j}(x)\xi_i\xi_j,\quad x\in K,\, \xi=(\xi_1,\ldots,\xi_d)\in\BR^d.
\end{equation}
\item[A2)]  $N:\bbR^d\to {\cal M}(\bbR^d)$, called the {\em
  L\'evy
kernel},  is a $\si$-finite Borel measure valued function such that,
{the mapping $x\mapsto N(x,A)$ is Borel measurable for each $A\in {\cal
  B}(\bbR^d)$ and}
\eqref{Nx1} holds.
\end{itemize}
We let
\begin{equation}
\label{MA}
M_A:=\sum_{i,j=1}^d\|q_{i,j}\|_\infty+\sum_{i=1}^d\|b_{i}\|_\infty+N_* 
\end{equation}


{Suppose that $D$ is an open set.
Obviously in order to define $Au(x)$ for $x\in D$ it suffices to
assume that $u\in C^2(D)\cap C_b(\bbR^d)$. In fact, 
$Au$ can be defined as an element of $L^p(D)$, using \eqref{011703-20}--\eqref{S}, even if  $u\in W^{2,p}_{\rm loc}(D)\cap
C_b(\bbR^d)$ when $p>d$. 
This is possible due to a well known estimate, see \cite[Lemme 1,
p. 361]{bony1}:  for any $p>d$ there
exists $C>0$ such that
$$
\|U[v]\|_{L^p(\bbR^d)}\le C\|\nabla^2 v\|_{L^p(\bbR^d)},\quad v\in W^{2,p}(\bbR^d).
$$
Here
$$
U[v](x):=\sup_{|y|>0}|y|^{-2}\left|v(x+y)-v(x)-\sum_{i=1}^dy_i\partial_{x_i}v(x)\right|.
$$}

\subsection{A subsolution, supersolution and solution of an elliptic type
integro-differential equation}

\label{sec2.3a}

\begin{definition}
\label{df3.1}
 Suppose that $c,{g}:\BR^d\to\BR$.  A function $u: \BR^d\to\BR$ is called a {\em subsolution}
of the equation
\begin{equation} 
\label{eq3.1}
(-A+c)f={g}
\end{equation}
on an open set $D\subset \bbR^d$
if $u\in W^{2,p}_{\rm loc}(D)\cap C_b(\BR^d)$ for some $p>d$ and
\begin{equation}
\label{subsol}
-Au(x)+c(x)u(x)\le g(x),\quad \mbox{for a.e. } x\in D.
\end{equation}
{We say that $u$ is a {\em supersolution} of equation \eqref{eq3.1a}
if $-u$ is  {a}
  subsolution. Furthermore, $u$ is a  {\em solution}
if it is both sub- and supersolution.}
\end{definition}
{Throughout the paper, we shall mostly deal with    homogeneous  equations, i.e. 
the case when $g=0$.}

\bigskip
Given a bounded and measurable function $c:D\to\bbR$ we let
\begin{equation}
\label{c-c}
\bar c=\sup_{x\in D} c(x),\quad \underline{c}:=\inf_{x\in D}
c(x)\quad\mbox{and}\quad \langle c\rangle_D:=\int_Dc\,dx.
\end{equation}
Concerning the coefficient $c(\cdot)$ it is assumed to satisfy $c\in
B_b(D)$ and  either one of the following hypotheses:
\begin{itemize}
\item[A3)] 
it is non-negative on $D$, or 
\item[A3')] 
it is both non-negative on $D$ and 
$
\langle c\rangle_D>0,
$ (i.e. $c\not\equiv 0$ a.e.), or 
\end{itemize} 
\begin{itemize}
\item[A3'')] 
$
\underline{c}>0.
$
\end{itemize}

Below we formulate some hypotheses concerning an open
set 
$D$ that shall be used in various results presented  {in} the
paper.  Throughout the paper, unless stated otherwise (notably
in 
 Proposition \ref{prop012804-20})   it is
 supposed to be  a   {\em domain}, i.e.  a bounded, open and 
  connected subset of $\R^d$. 

\begin{definition}[Interior ball condition]
\label{def2.3}
For a given domain $D$ 
and $\hat x\in\partial D$, we say that the {\em interior
ball condition} is satisfied at $\hat x$, if there exists $B$ -  an open
ball of a positive
radius - contained in $D$ and such that $\hat x\in \partial D\cap \bar
B$. 
Any ball $B$ as above shall
be called {\em an
interior ball} for $D$ at $\hat x$. 
We furthermore say that $D$
satisfies  the {\em interior
ball condition} if the condition is satisfied at any point of its boundary.
We let $\frak r_D(\hat x)$ denote the maximal radius, that is less
than, or equal to $1$, of an interior ball
at the given $\hat x\in\partial D$.
We  say that $D$
satisfies  the {\em uniform interior
ball condition} if it satisfies the interior ball condition and 
\begin{equation}
\label{IBC}
\mbox{there exists $r_D\in(0,1)$ such that $\frak r_D(\hat x)\ge
  r_D,\mbox{for all } \hat
  x\in \partial D$.}
\end{equation}

\end{definition}

Suppose  $D$ satisfies the interior ball condition at $\hat x\in \partial D$
and $B(y,r)\subset D$, with $r>0$,  is an
interior ball at $\hat x$.   Any vector of the form
\begin{equation}
\label{bnr}
{\bn}:=\frac{1}{r}(\hat x-y)
\end{equation}
shall be called a {\em generalized unit outward normal vector} to $
\partial D$ at $\hat x$. Denote by 
${\frak n}(\hat x)$ the set of all such vectors at $\hat x\in \partial
D$.

\begin{definition}[Lower outward normal  derivative at a boundary point]
\label{def022605-20}
Suppose that  $D$ satisfies the interior ball condition at $\hat x\in
D$. Assume furthermore that  $f:\bbR^d \to\bbR$.
For any ${\bf n}\in {\frak n}(\hat x)$ define the {\em lower outward normal}  derivative at $\hat x$ as 
\begin{equation}
\label{062605-20}
\underline{\partial}_{\bn}f(\hat x):=\mathop{\liminf_{h\to0+}}_{\hat
x-h{\bn}\in D}\frac{f(\hat
x)-f(\hat
x-h{\bn})}{h}.
\end{equation}
\end{definition}
\begin{remark} 
{If $\partial D$ is $C^2$ smooth, then for any $\hat
x\in \partial D$ the set ${\frak n}(\hat x)$ is
a singleton and consists only of the unit
outward normal vector to $\partial D$ at $\hat x$.
In addition, if  $f:\bbR^d \to\bbR$ possesses a
derivative at $\hat x\in \partial D$, then
$\underline{\partial}_{\bn}f={\partial}_{\bn}f$ - the outward normal derivative.}
\end{remark}

In some cases, see e.g. Theorem \ref{cor.d21.20.1} below, we shall assume
that the domain satisfies the exterior ball condition in the following sense.
\begin{definition}
\label{def3.1z}
  {For a given domain $D$ 
and $\hat x\in\partial D$, we say that the {\em exterior
ball condition} is satisfied at $\hat x$, if there exists $B$ -  an open
ball of a positive
radius - contained in $D^c$ and such that $\hat x\in \partial D\cap \bar
B$. Any ball $B$ as above shall
be called {\em an
exterior ball} for $D$ at $\hat x$.  We furthermore say that $D$
satisfies  the {\em exterior
ball condition} if the condition is satisfied at any point of its boundary.}
\end{definition}

\section{Maximum principles and quantitative Hopf Lemmas}

\label{sec3}

The present section is devoted to the formulation of the first part of
the set of our main results. Their proofs are presented throughout Section \ref{sec5}.

\subsection{Maximum principles and the Hopf lemma}

Our first result is a form of a weak maximum principle. 
\begin{proposition}{[Weak maximum principle]}
\label{prop012804-20}
Suppose that the function $c$ 
  satisfies  A3). If $u$  is a subsolution to
\eqref{eq3.1a} on a  {bounded, open set  $D\subset \BR^d$} then (cf \eqref{cSD})
\begin{equation}
\label{032804-20}
\sup_{x\in D}u(x)\le  {\sup_{y\in(\mathcal S(D)\setminus D)\cup\partial
D}u^+(y) } .
\end{equation}
Moreover, if $c\equiv 0$, then $u^+$ in the right-hand side can be replaced by $u$.
\end{proposition}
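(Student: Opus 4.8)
The plan is to obtain the weak maximum principle via the probabilistic (Feynman--Kac) representation announced in the introduction, exactly as the paper intends to do for its other results. Let $(X_t)_{t\ge0}$ be the canonical process associated with the martingale problem for $A$, started from $x\in D$, and let $\tau_D := \inf\{t\ge0 : X_t\notin D\}$ be the first exit time from $D$. Since $D$ is bounded and $L$ is uniformly elliptic on compact sets, one expects $\E_x\tau_D<\infty$ for every $x\in D$; this is the first technical point I would need (a standard consequence of the nondegeneracy of the diffusion part on $\bar D$, giving an exponential moment bound on $\tau_D$). The key structural fact is that a jump of $X$ out of $D$ lands at a point of $\mathcal S(D)\setminus D$ by definition of $\mathcal S(D)$ in \eqref{cSD}, while a continuous exit lands on $\partial D$; hence $X_{\tau_D}\in (\mathcal S(D)\setminus D)\cup\partial D$ almost surely on $\{\tau_D<\infty\}$.

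First I would establish that for a subsolution $u$ of $(-A+c)f=0$ on $D$ with $c\ge0$, the process
\[
M_t := e^{-\int_0^t c(X_s)\,ds}\, u(X_{t\wedge\tau_D})
\]
is a bounded submartingale under $\E_x$. This is the content of (the proof of) Proposition~\ref{prop010204-20}: since $u\in W^{2,p}_{\rm loc}(D)\cap C_b(\bbR^d)$ with $p>d$, the Itô/Dynkin formula for the martingale problem applies (in its $W^{2,p}$ form via the Bony estimate quoted after \eqref{MA}), and $-Au+cu\le0$ a.e. on $D$ gives the submartingale inequality; boundedness comes from $u\in C_b(\bbR^d)$, $c\ge0$. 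Then optional stopping at $\tau_D$ together with $\E_x\tau_D<\infty$ yields
\[
u(x) = M_0 \le \E_x M_{\tau_D} = \E_x\!\left[e^{-\int_0^{\tau_D} c(X_s)\,ds}\, u(X_{\tau_D})\right] \le \E_x\!\left[u^+(X_{\tau_D})\right],
\]
where in the last step I use $e^{-\int_0^{\tau_D}c}\le1$ (again $c\ge0$) and bound by the positive part. Since $X_{\tau_D}\in (\mathcal S(D)\setminus D)\cup\partial D$ a.s., the right-hand side is at most $\max\{\sup_{\mathcal S(D)\setminus D}u^+,\ \sup_{\partial D}u^+\}$, which is \eqref{032804-20}. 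Taking the supremum over $x\in D$ finishes the first assertion; note connectedness of $D$ is nowhere used, consistent with the statement.

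For the final sentence, when $c\equiv0$ the exponential factor disappears and $M_t=u(X_{t\wedge\tau_D})$ is a bounded submartingale, so $u(x)\le\E_x u(X_{\tau_D})\le\max\{\sup_{\mathcal S(D)\setminus D}u,\ \sup_{\partial D}u\}$ directly, without passing to $u^+$. The step I expect to be the main obstacle is the rigorous justification of $\E_x\tau_D<\infty$ and, more delicately, of applying the Dynkin formula to a merely $W^{2,p}_{\rm loc}$ (not $C^2$) subsolution: one must either invoke an approximation/localization argument (exhaust $D$ by $V_n\Subset D$, apply Itô on $V_n$ where $u$ can be mollified, then let $n\to\infty$ using $\tau_{V_n}\uparrow\tau_D$ and dominated convergence via the exponential moment of $\tau_D$) or cite the extended Itô formula for the martingale problem already set up in Section~\ref{sec4}. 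A secondary subtlety is ensuring the integro-differential part $S u$ is well defined along trajectories and that the compensated jump integral is a genuine martingale; this is handled by \eqref{Nx1} together with $u\in C_b(\bbR^d)$, which is precisely why the global boundedness hypothesis on $u$ appears in Definition~\ref{df3.1}.
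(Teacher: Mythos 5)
Your proposal is correct and follows essentially the same route as the paper: the Feynman--Kac representation of Proposition \ref{prop010204-20} gives $u(x)\le \E_x[e_c(\tau_D)u(X_{\tau_D})]\le \E_x u^+(X_{\tau_D})$, and the conclusion follows once one knows $X_{\tau_D}\in(\mathcal S(D)\setminus D)\cup\partial D$ a.s. The only point you state somewhat informally --- that a jump out of $D$ must land in $\mathcal S(D)\setminus D$ --- is exactly what the paper makes rigorous via the Ikeda--Watanabe formula, and the technical issues you flag ($\E_x\tau_D<\infty$ and the $W^{2,p}_{\rm loc}$ It\^o formula) are resolved in the paper precisely by the approximation and Krylov-estimate argument you sketch.
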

The proof of the proposition is shown in Section \ref{sec5.1b}.

 {
\begin{remark}
Observe that the right-hand side of \eqref{032804-20} reduces to
$\sup_{y\in D^c}u^+(y)$ in the case $\mathcal S(D)=\BR^d$ 
and to $\sup_{y\in\partial D}u^+(y)$ when $\mathcal S(D)=\overline D$.
\end{remark}}

 {
\begin{theorem}[Bony maximum principle]
\label{thm:main3a}
Let $D$ be an open subset of $\BR^d$ and $ {\mathcal S(D)\subset \overline D}$. 
Suppose furthermore  that  $u\in W^{2,p}_{loc}(D)\cap C_b(\BR^d)$ for some $p>d$ and $\hat x\in
D$ is its  maximum point in $D$.
Then, cf \eqref{essinf},
\begin{equation}
\label{010104-20aabb}
\liminf_{x\rightarrow \hat x} {\rm ess} \,Au(x)\le 0.
\end{equation}
\end{theorem}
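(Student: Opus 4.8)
The plan is to argue by contradiction and to use the probabilistic representation of Section~\ref{sec4}. Suppose that \eqref{010104-20aabb} fails. By the definition~\eqref{essinf} of the essential limit inferior there then exist $\delta>0$ and $r>0$ --- which, since $\hat x\in D$ is an interior point, we may take small enough that $\overline{B(\hat x,r)}\subset D$ --- with
\[
Au(x)\ge\delta\qquad\text{for a.e. }x\in V:=B(\hat x,r).
\]
In particular $-Au\le 0$ a.e.\ on $V$, so $u$ is a subsolution of $(-A+c)f=0$ on $V$ in the sense of Definition~\ref{df3.1}, with $c\equiv 0$ (which satisfies A3). Since $u\in C_b(\R^d)$ and $\hat x$ is a maximum point of $u$ in $D$, we have $u(\hat x)=\sup_{\overline D}u$; and because $\mathcal S(D)\subset\overline D$ the bound $u\le u(\hat x)$ also holds on $\mathcal S(D)$, hence on $\overline D\cup\mathcal S(D)$.

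The core step is to feed $u$ into the Dynkin-type identity underlying the Feynman--Kac representation of Proposition~\ref{prop010204-20}. Let $(X_t)$ be the canonical process associated with $A$ and let $\tau$ be its first exit time from $V$. Applied on the bounded set $V$ with vanishing potential it gives, for every $t>0$,
\[
\E_{\hat x}\big[u(X_{t\wedge\tau})\big]=u(\hat x)+\E_{\hat x}\!\int_0^{t\wedge\tau}Au(X_s)\,ds .
\]
(The right-hand side is well defined: by the uniform ellipticity assumption A1 together with the Bony estimate recalled in Section~\ref{sec2.1} one has $Au\in L^p(V)$, and the occupation measure of $X$ is absolutely continuous with respect to the Lebesgue measure, so $Au(X_s)$ is unambiguously defined for a.e.\ $s<\tau$.) I claim that $u(X_{t\wedge\tau})\le u(\hat x)$ $\mathbb P_{\hat x}$-almost surely. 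Indeed, for $s<\tau$ one has $X_s\in V\subset D$; and at the exit the process is either at a point of $\partial V\subset D$ (continuous exit), or it is the image $z+\Delta$ of a jump emanating from some $z\in\overline V\subset D$, in which case $X_\tau\in\mathcal S_z\subset\mathcal S(D)\subset\overline D$. Thus $X_{t\wedge\tau}\in\overline D\cup\mathcal S(D)$ in all cases, whence $u(X_{t\wedge\tau})\le u(\hat x)$ and therefore $\E_{\hat x}[u(X_{t\wedge\tau})]\le u(\hat x)$. On the other hand $Au\ge\delta$ a.e.\ on $V$ gives $\int_0^{t\wedge\tau}Au(X_s)\,ds\ge\delta\,(t\wedge\tau)$ a.s., while $\E_{\hat x}[t\wedge\tau]>0$ because $\tau>0$ $\mathbb P_{\hat x}$-a.s. (the process starts in the open set $V$). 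Substituting these two observations into the identity yields $u(\hat x)\ge\E_{\hat x}[u(X_{t\wedge\tau})]>u(\hat x)$, a contradiction; hence no such $\delta,r$ exist and \eqref{010104-20aabb} holds.

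I expect the main obstacle to lie not in the structure of the argument but in the justification of the Dynkin identity above for a function that is only in $W^{2,p}_{\rm loc}(D)\cap C_b(\R^d)$, $p>d$, and for the $L^p$-function $Au$ evaluated along the trajectories of $X$; this rests on the nondegeneracy of $X$ (Krylov-type estimates for the occupation density, which is where A1 is used) and on the integrability of $Su$ provided by the Bony estimate, and it should already be part of the martingale-problem machinery set up in Section~\ref{sec4}. A purely analytic route --- proving directly that $\esssup_{B(\hat x,\rho)}Su\to 0$ as $\rho\to 0$ and then invoking the classical local Bony maximum principle for $L$ --- seems to fail, because under \eqref{Nx1} alone the small-jump contribution $\int_{|y|\le\rho}|y|^2\,N(x,dy)$ need not tend to $0$ uniformly in $x\in V$. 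The probabilistic argument avoids this because it never separates $Lu$ from $Su$: it treats $Au=Lu+Su\in L^p(V)$ as a single object integrated along the absolutely continuous occupation measure of $X$, while the hypothesis $\mathcal S(D)\subset\overline D$ is exactly what confines the process to $\overline D\cup\mathcal S(D)$ up to time $t\wedge\tau$ and thus preserves $u(X_{t\wedge\tau})\le u(\hat x)$.
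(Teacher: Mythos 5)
Your proposal is correct and is essentially the paper's own argument in contrapositive form: both rest on the Dynkin/Feynman--Kac identity of Proposition \ref{prop010204-20} on a small ball around $\hat x$, the observation (via the Ikeda--Watanabe formula) that $\mathcal S(D)\subset\overline D$ forces $X$ at the exit time to lie in $\overline D$ so that $u(X_{\tau})\le u(\hat x)$, and the nondegeneracy of the occupation measure (the paper phrases this through the strict positivity of the Green density $r_0^{V_\varepsilon}(\hat x,\cdot)$, you through $\delta\,\E_{\hat x}[t\wedge\tau]>0$ together with the Krylov estimate). The only step you assert rather than prove --- that jump exits land in $\mathcal S_{X_{\tau-}}$ almost surely --- is exactly what the paper justifies with the Ikeda--Watanabe computation.
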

The proof of the theorem is presented in Section \ref{sec-bony}.
}

In an analogy to the case of elliptic differential operators, we have the strong maximum
principle and Hopf lemma for a subsolution of \eqref{eq3.1a}.  The
formulation of the latter,  given below, is of a bit more
''quantitative'' nature
  than   usually encountered in the literature. To state it we
  introduce some notation. For $r>0$ we
let 
\begin{equation}
\label{Da}
D_r:=[x\in D: \mbox{dist}(x,\partial D)>r].
\end{equation}
{We also  let ${\cal O}_D:=[x\in\bbR^d:\,{\rm dist}(x,D)<1]$.}
\begin{theorem}
\label{strong-max}
Suppose that  the function $c$  
  satisfies  A3).
 \begin{enumerate}
  \item[1)] (The strong maximum principle)
If $u$ is a subsolution to
  \eqref{eq3.1a} on a domain $D$ that  attains its maximum $M\ge0$ {over $\mathcal S(D)\cup \overline D$}
at $\hat x\in   D$, then $u\equiv M$ in $D$. 
\item[2)] (The  Hopf lemma)
Assume that a domain $D$   {  is  bounded and} satisfies the uniform interior ball
condition, with constant $r_D\in(0,1]$. Then, there exist  $r\in
(0,r_D/2], a>0$ depending only on $M_A$ and  {$\ga_{\bar {\cal O}_D}$} such that
for any non-constant  subsolution  $u$ to \eqref{eq3.1a} that attains
its non-negative  maximum  {over $\mathcal S(D)\cup\overline D$}  at
$\hat x\in\partial D$, and  any ${\bf n}\in {\frak n}(\hat x)$ at
$\hat x\in   \partial D$  we have
\begin{equation}
\label{010104-20a}
\underline{\partial}_{{\bn}}u(\hat x)\ge a {\inf_{x\in D_{r}}}(u(\hat x)-u(x))>0.
\end{equation}
\end{enumerate}
\end{theorem}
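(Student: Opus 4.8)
The plan is to use the probabilistic representation of subsolutions via the Feynman–Kac formula (Proposition \ref{prop010204-20}) together with the weak maximum principle (Proposition \ref{prop012804-20}), exploiting irreducibility and the strong Feller property of the canonical process $(X_t)$ to get the sharp positivity needed.

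For part 1), the strong maximum principle, I would first reduce to the case $M=0$ by considering $u-M$; note that since $c\ge 0$, $u-M$ is still a subsolution of $(-A+c)f=0$ (here the inequality $c(x)(u(x)-M)\le c(x)u(x)$ for $u\le M\le $ ... actually one checks $-A(u-M)+c(u-M)=-Au+cu-cM\le -cM\le 0$). Assume for contradiction that $u\not\equiv 0$ on $D$; then by Proposition \ref{prop012804-20} applied on suitable subdomains, or directly, the set $\{u<0\}\cap D$ is nonempty and open. The key is the probabilistic representation \eqref{010204-20dfg}: $u(\hat x)$ is expressed as an expectation involving $u$ evaluated along the trajectory of $X_t$ up to the exit time from $D$ (with the gauge/Feynman–Kac weight) plus the contribution from $\mathcal S(D)\cup \overline D$. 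Since $\hat x\in D$ is an interior maximum with value $0$ and $u\le 0$ everywhere on $\mathcal S(D)\cup\overline D$, the representation forces $u(X_t)=0$ along $\mathbb P_{\hat x}$-almost every trajectory up to the exit time. By irreducibility of the process (it reaches every open subset of $D$ with positive probability starting from $\hat x$), we conclude $u\equiv 0$ on $D$.

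For part 2), the Hopf lemma, I would first fix $\hat x\in\partial D$ a maximum point over $\mathcal S(D)\cup\overline D$ and, after subtracting $M$, assume $u\le 0$ with $u(\hat x)=0$; by part 1) and non-constancy, $u<0$ somewhere in $D$, in fact (again by irreducibility arguments) the infimum $m:=\inf_{x\in D_{r/2}}(u(\hat x)-u(x))=-\sup_{D_{r/2}}u$ is strictly positive once $r$ is small enough. Let $B=B(y,\mathfrak r)$ be an interior ball at $\hat x$ with $\mathfrak r\ge r_D$. The strategy is the classical barrier argument transplanted to the integro-differential setting: construct an explicit radial subsolution (barrier) $w$ on an annular region $B\setminus \bar B(y,\mathfrak r/2)$, of the form $w(x)=\beta(e^{-\gamma|x-y|^2}-e^{-\gamma \mathfrak r^2})$ for suitable constants, which is $\le u-u(\hat x)$ on the inner sphere (using $m>0$ and choosing $\beta$ proportional to $m$) and $\le 0 = u-u(\hat x)$ on the outer sphere through $\hat x$. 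The point is to verify $-Aw+cw\le 0$ on the annulus: for the differential part $L$ this is the standard computation using $\lambda_{\overline D}>0$ and the $L^\infty$ bounds $M_A$; for the nonlocal part $S$ one must check that $Sw\ge$ something controllable — here $w$ must be defined globally (extended by its boundary value or by $0$ outside $B$) so that $Sw(x)$ makes sense, and one uses that $w\le u-u(\hat x)$ wherever comparison is needed together with $N_*<\infty$ to absorb the jump term. Then the comparison principle (a consequence of the weak maximum principle, Proposition \ref{prop012804-20}, applied to $u-u(\hat x)-w$ on the annulus, noting $\mathcal S$ of the annulus may exceed it but is handled by the global sign of $u-u(\hat x)-w$) gives $u(x)-u(\hat x)\ge w(x)$ on the annulus. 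Taking the lower normal derivative along $\hat x - h\mathbf n$ and using $w(\hat x)=0$ yields $\underline{\partial}_{\mathbf n}u(\hat x)\ge \underline{\partial}_{\mathbf n}w(\hat x) = 2\beta\gamma\mathfrak r e^{-\gamma\mathfrak r^2} = a\, m$ with $a>0$ depending only on $\gamma,\mathfrak r,r_D$ and the normalization, hence only on $M_A$ and $\lambda_{\overline D}$; strict positivity of the right side follows from $m>0$.

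The main obstacle is the treatment of the nonlocal term $S$ in the barrier construction: unlike in the purely differential case, $Sw(x)$ depends on values of $w$ far from $x$, possibly outside the annulus and even outside $B$, so one cannot compute it locally. The resolution is to note that we only need the comparison $u-u(\hat x)\ge w$ and to set up the argument so that $w$ is chosen with $w\le u-u(\hat x)$ on all of $\mathcal S(D)\cup\overline D$ outside the annulus (taking $w$ to vanish, or be negative, there), which makes $S(u-u(\hat x)-w)(x)\ge 0$ estimates work in the direction needed; the finiteness $N_*<\infty$ and boundedness of $u$ guarantee all integrals converge. A secondary delicate point is ensuring the constant $a$ genuinely depends only on $M_A$ and $\lambda_{\overline D}$ and not on the particular domain beyond $r_D$ — this follows because after scaling the barrier lives on a ball of radius $\ge r_D$ and the computations are uniform in $\mathfrak r\in[r_D,1]$.
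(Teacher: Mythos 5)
Your proof of part 1) takes a genuinely different route from the paper's. The paper argues by contradiction with the classical barrier device: it locates a ball $B(\bar y,r)\subset D$ with $u=M$ at some $\hat x\in\partial B(\bar y,r)$ and $u<M$ inside, adds a small multiple of the auxiliary function $\eta$ of Lemma \ref{lm010204-20a}, and contradicts the weak maximum principle (Proposition \ref{prop012804-20}). You instead argue directly from the Feynman--Kac identity and irreducibility; this works, but the step ``the representation forces $u(X_t)=0$ along a.e.\ trajectory'' needs to be made precise. What one actually gets from \eqref{010204-20dfg} with $\tau_D\wedge t$ is that, for each \emph{fixed} $t$, $\E_{\hat x}[e_c(\tau_D\wedge t)u(X_{\tau_D\wedge t})]=0$ with a nonpositive integrand (using that $X_{\tau_D\wedge t}\in\overline D\cup(\mathcal S(D)\setminus D)$ a.s., as in the proof of Proposition \ref{prop012804-20}), hence $u(X_t)=0$ a.s.\ on $\{t<\tau_D\}$ for each $t$; if $\{u<0\}\cap D$ were nonempty it would be open of positive Lebesgue measure, and \eqref{eq.uewo.st}--\eqref{070909-20} would give $\E_{\hat x}\int_0^{\tau_D}\mathbf 1_{\{u<0\}}(X_s)\,ds>0$, a contradiction; continuity then upgrades $u=0$ a.e.\ to $u\equiv0$. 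Your route avoids the barrier entirely in part 1); the paper's route has the advantage that the same barrier is reused verbatim in part 2).

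In part 2) you follow the paper's strategy, but the comparison is set up with the wrong sign, and as written the argument proves the reverse of the Hopf inequality. You require $w\le u-u(\hat x)$ on the annulus with $w(\hat x)=0=u(\hat x)-u(\hat x)$; since $u-u(\hat x)\le 0$ there, this forces $\beta<0$, and from $f\ge g$ with $f(\hat x)=g(\hat x)$ one only gets $\underline{\partial}_{\bf n}f(\hat x)\le\underline{\partial}_{\bf n}g(\hat x)$, i.e.\ an \emph{upper} bound $\underline{\partial}_{\bf n}u(\hat x)\le \partial_{\bf n}w(\hat x)$, not a lower one. The barrier must bound $u$ from \emph{above}: as in the paper, take $\eta\ge0$ on $B(\bar y,r)$ with $(A-c)\eta\ge0$ on the annulus (so that $u+\varepsilon\eta$ is again a subsolution), choose $\varepsilon=\inf_{D_{r/2}}(M-u)>0$ so that $u+\varepsilon\eta\le M$ on the inner ball and on $\bar B^c(\bar y,r)$ (hence on all of $\mathcal S(V_*)\setminus V_*$ and $\partial V_*$), and conclude $u(\hat x)-u\ge\varepsilon\eta$, which yields the correct lower bound $\underline{\partial}_{\bf n}u(\hat x)\ge 2\varepsilon\alpha re^{-\alpha r^2}$. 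Separately, the actual verification that the barrier is a subsolution despite the nonlocal term is the technical heart of the proof and is only gestured at in your plan: your proposed fix (controlling the sign of $u-u(\hat x)-w$ off the annulus) addresses the comparison step but not the inequality $(A-c)\eta\ge0$ itself, which requires the explicit estimate of $S\eta$ in Lemma \ref{lm010204-20a} (the far-field contribution is of order $\|N_{\bbR^d}\|_\infty$ and is absorbed by taking $\alpha r^2$ equal to a fixed constant with $\alpha$ large, which is exactly why the radius $r$ in the statement may have to be taken strictly smaller than $r_D$, not merely ``uniform in $\mathfrak r\in[r_D,1]$'').
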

The proof of the theorem is given in Section \ref{sec4.2}. 


\medskip

\subsection{Quantitative Hopf lemmas}

Suppose that $u$ is a subsolution of \eqref{eq3.1a}.
If $c$ satisfies either the hypothesis A3'), or A3''), then we can
formulate a lower bound on the outward normal partial derivative of
$u$ at the maximum
point in terms of its maximal value. 
\begin{theorem}[Quantitative Hopf Lemma I.A]
\label{thm:main1a}
Suppose that function $c$ 
  satisfies hypothesis A3''). In addition, assume that a domain
  $D$ {is bounded} and satisfies the  uniform interior
ball condition. Then, there exists $a>0$ that depends
  only on $r_D$ of \eqref{IBC},  {ellipticity constant $\ga_{\bar {\cal O}_D}$},
  the lower bound  $\underline c$ and $M_A+\|c\|_\infty$
(cf \eqref{MA}) such that for any
$u$ - a subsolution to  \eqref{eq3.1a} - that attains its maximum  {over $\mathcal S(D)\cup\overline D$}
at $\hat x\in \partial D$ and is not identically equal to $0$  we have
\begin{equation}
\label{010104-20aa}
\underline{\partial}_{{\bn}}u(\hat x)>  a u(\hat x).
\end{equation}
\end{theorem}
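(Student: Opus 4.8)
\medskip
\noindent\textbf{Sketch of the proof.}
Put $M:=u(\hat x)=\max_{\mathcal S(D)\cup\overline D}u$. If $M<0$ the assertion is immediate: for $\hat x-h\bn\in D$ the difference quotient in \eqref{062605-20} is nonnegative (since $\hat x$ maximises $u$ over $\overline D$), whence $\underline{\partial}_{\bn}u(\hat x)\ge 0>aM$ for every $a>0$; and if $M=0$ with $u\not\equiv0$, then $u$ is non-constant, so Theorem~\ref{strong-max}, part~2), already gives $\underline{\partial}_{\bn}u(\hat x)>0=aM$. Assume henceforth $M>0$. Then $u$ is non-constant, because under A3'') no positive constant satisfies \eqref{subsol} (one has $-AM+cM=cM\ge\underline c\,M>0$ on $D$, as $A$ annihilates constants), and Theorem~\ref{strong-max}, part~2), provides $r\in(0,r_D]$ and $a_0>0$, both depending only on $M_A$ and $\lambda_{\overline D}$, with
\begin{equation}
\label{skp1}
\underline{\partial}_{\bn}u(\hat x)\ \ge\ a_0\Big(M-\sup_{x\in D_{r/2}}u(x)\Big)\ >\ 0 .
\end{equation}
It therefore suffices to produce $\kappa\in(0,1)$, depending only on the quantities allowed in the statement, with $\sup_{D_{r/2}}u\le(1-\kappa)M$; then $a:=a_0\kappa/2$ does the job, the strict inequality in \eqref{010104-20aa} following from $a_0\kappa M>0$ (and the cases $M\le0$ above hold for this $a$ as well).

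To bound $u$ on $D_{r/2}$ I use the probabilistic representation of subsolutions (Proposition~\ref{prop010204-20} and \eqref{010204-20dfg}): if $(X_t)$ is the canonical process of the martingale problem for $A$ and $\tau_D$ its exit time from $D$ (a.s.\ finite, as $D$ is bounded), then for $x\in D$
\begin{equation}
\label{skp2}
u(x)\ \le\ \bbE_x\Big[\exp\Big(-\int_0^{\tau_D}c(X_s)\,ds\Big)u(X_{\tau_D})\Big]\ \le\ M\,\bbE_x\big[e^{-\underline c\,\tau_D}\big],
\end{equation}
where $X_{\tau_D}\in\mathcal S(D)\cup\overline D$ a.s. (before $\tau_D$ the process stays in $D$, and an exit by a jump lands in $\mathcal S(D)$), so $u(X_{\tau_D})\le M$, and $M>0$, $c\ge\underline c$ give the second inequality. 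For any $T>0$, splitting on $\{\tau_D\le T\}$ and its complement yields $\bbE_x[e^{-\underline c\,\tau_D}]\le 1-(1-e^{-\underline c\,T})\,\mathbb P_x(\tau_D>T)$. Since $x\in D_{r/2}$ forces $B(x,r/2)\subset D$, hence $\tau_D\ge\tau_{B(x,r/2)}$, the claim reduces to a uniform small-time non-exit estimate: there exist $T=T(M_A,r)>0$ and $p_0=p_0(M_A,r)>0$ with
\begin{equation}
\label{skp3}
\mathbb P_x\Big(\sup_{0\le t\le T}|X_t-x|<\tfrac r2\Big)\ \ge\ p_0,\qquad x\in\R^d ,
\end{equation}
for then $\kappa:=(1-e^{-\underline c\,T})p_0$ depends only on $\underline c$, $M_A$ and $\lambda_{\overline D}$ (the last through $r$), as needed.

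I expect \eqref{skp3} to be the only real obstacle; the rest is bookkeeping of constants. It is a routine small-time oscillation bound, uniform in the starting point because A1)--A2) bound all coefficients by $M_A$. The sole delicate point is that a single large jump can leave $B(x,r/2)$ instantly while the L\'evy kernel is controlled only through $N_*$; one therefore separates the jumps of size $\ge r/4$ — whose intensity is $N(x,\{|y|\ge r/4\})\le 16M_A/r^2$, so they are absent on $[0,T]$ with probability $\ge e^{-16M_AT/r^2}$ — from the complementary part, whose continuous and compensated small-jump martingale components have quadratic variation $\le 2M_AT$ and whose drift, including the compensator coming from $S$, has total variation $\le C(M_A,r)T$ (using $\int_{|y|<r/4}|y|^3N(x,dy)\le\tfrac r4 N_*$ and $\int_{|y|\ge r/4}\tfrac{|y|}{1+|y|^2}N(x,dy)\le 8M_A/r^2$ to control the $S$-compensator). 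A Doob maximal inequality then yields \eqref{skp3} once $T$ is small enough in terms of $M_A$ and $r$; such estimates are in any case available from the probabilistic framework of Section~\ref{sec4}.
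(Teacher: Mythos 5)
Your argument is correct, and it assembles the same basic ingredients as the paper in a genuinely different order. The paper's proof of Theorem \ref{thm:main1a} starts from the gauge bound $u(\hat x)-u(x)\ge u(\hat x)\,w_{c,D}(x)$ (its \eqref{020204-20}) and then proves a barrier comparison for the gauge complement $w_{c,D}$ itself on an annulus at $\hat x$ (Lemma \ref{lm010204-20z}), finally differentiating the barrier at $\hat x$; this route forces the authors to treat $w_{c,D}$ as a weak solution in the extended-generator sense (Section \ref{sec3.2a}). You instead feed the already-proved Hopf lemma, part 2) of Theorem \ref{strong-max}, with the uniform interior bound $\sup_{D_{r/2}}u\le(1-\kappa)M$, which you extract from the same Feynman--Kac/gauge estimate together with a uniform small-time non-exit probability. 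This is a legitimate and arguably cleaner modularization: the barrier is applied only to the genuine $W^{2,p}_{\rm loc}$ subsolution $u$ (inside Theorem \ref{strong-max}), and the gauge function enters only through its infimum over $D_{r/2}$. The one step you leave as a claim --- the uniform lower bound on $P_x\big(\tau_{B(x,r/2)}>T\big)$ --- is precisely what the paper's Lemma \ref{lm010704-20y} supplies (see the display \eqref{extra}); it is obtained there more simply than in your sketch, by applying the martingale problem to a smooth bump function $\rho_{\bar x,r}$, which yields $P_{\bar x}(\tau_{B(\bar x,r)}\le t)\le C t$ with $C$ controlled by $M_A$ and $r$, with no need to separate large from small jumps. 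With that lemma substituted for your sketched proof of the non-exit estimate, your argument is complete, and the resulting constant $a=a_0\kappa/2$ has exactly the dependence on $r_D$, $\la_{\bar D}$, $\underline c$ and $M_A+\|c\|_\infty$ asserted in the statement.
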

The proof of the theorem is presented in Section \ref{sec6.3a}.

\bigskip

A version of the  result in the case when $c$ is only assumed to be
non-negative and not
equal to $0$ a.e. can be stated as follows.
\begin{theorem}[Quantitative Hopf lemma I.B]
\label{thm:main1}
Suppose that function $c$ 
  satisfies A3'). In addition, assume that a {bounded} domain $D$ satisfies
  the  interior ball condition. Then, there exist
 a constant $r_0>0$,    depending 
  only on $D$, ${\ga_{\bar {\cal O}_D}}$ and
  $M_A+\|c\|_\infty$ (cf \eqref{MA}),
and 
a nondecreasing function $\rho_{c,A,D}:(0,+\infty)\to(0,+\infty)$  given by formula
\eqref{eq.uewo.dism} below and
depending only on $A,c$ and $D$ such that for 
$u$ and 
$\hat x\in \partial D$ as in Theorem \ref{thm:main1a} we have
\begin{equation}
\label{010104-20}
\underline{\partial}_{{\bn}}u(\hat x)>  a \rho_{c,A,D}(r_0\wedge \frak r(\hat x)) u(\hat x).
\end{equation}
Here $\frak r(\cdot)$ is as in Definition \ref{def2.3}.
 \end{theorem}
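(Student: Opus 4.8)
\textbf{Proof plan for Theorem \ref{thm:main1}.}

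The plan is to reduce the case A3') to the already-established case A3'') of Theorem \ref{thm:main1a} by means of a comparison argument that replaces the possibly-vanishing coefficient $c$ by a strictly positive one, at the cost of introducing the domain- and operator-dependent factor $\rho_{c,A,D}$. First I would fix $\hat x\in\partial D$ and an interior ball $B=B(y,r)\subset D$ at $\hat x$ with $r=r_0\wedge\frak r(\hat x)$, where $r_0$ is to be chosen below depending only on the listed quantities; since $\underline{\partial}_{\bn}u(\hat x)$ only sees the behaviour of $u$ along the segment from $y$ to $\hat x$, it suffices to bound this derivative from below using information inside $B$. On $B$ the subsolution inequality reads $-Au+cu\le 0$, and by the probabilistic (Feynman–Kac) representation from Proposition \ref{prop010204-20} together with the weak/strong maximum principle (Proposition \ref{prop012804-20}, Theorem \ref{strong-max}) one gets that $u>u(\hat x)$ cannot hold anywhere, so $w:=u(\hat x)-u\ge 0$ on $\mathcal S(D)\cup\overline D$ and $w$ is a supersolution of an equation with the same $c$; the key point is to produce a strictly positive lower bound for $w$ at an interior reference point.

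The heart of the argument is to show that there exists a point $x_*$ (for instance the center $y$ of $B$, or a point on the shrunken ball $B(y,r/2)$) and a positive quantity, which I call $\rho_{c,A,D}(r)$, such that
\begin{equation}
\label{plan-step}
u(\hat x)-u(x_*)\ \ge\ \rho_{c,A,D}(r)\,u(\hat x).
\end{equation}
To obtain this I would use the Feynman–Kac formula: for the canonical process $(X_t)$ associated with $A$ started at $x_*$ and stopped at the exit time $\tau_B$ of $B$, the function $w=u(\hat x)-u$ satisfies
$$
w(x_*)\ \ge\ \mathbb E_{x_*}\Big[e^{-\int_0^{\tau_B}c(X_s)\,ds}\,w(X_{\tau_B})\Big]\ +\ \mathbb E_{x_*}\Big[\int_0^{\tau_B}e^{-\int_0^{t}c(X_s)\,ds}\,c(X_t)\,u(\hat x)\,dt\Big].
$$
Since $u(\hat x)=M$ is the global maximum over $\mathcal S(D)\cup\overline D$ and $w\ge0$ there, the first term is nonnegative; the second term is at least $u(\hat x)$ times $\mathbb E_{x_*}\big[\int_0^{\tau_B}e^{-\bar c t}c(X_t)\,dt\big]$. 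Because $\langle c\rangle_D>0$, the set $\{c>0\}$ has positive Lebesgue measure, and by irreducibility plus the uniform ellipticity on $\bar D$ (hence a lower Green-function bound for the process killed on exiting $B$) this expectation is strictly positive; writing it as $\rho_{c,A,D}(r)$ and noting it is nondecreasing in $r$ (larger balls only increase the exit time and the accessible region) gives \eqref{plan-step}. The dependence on $A$, $c$, $D$ here is genuine and cannot be removed, which is exactly why the statement is weaker than Theorem \ref{thm:main1a}; the radius threshold $r_0$ and a universal constant $a$ (depending only on $M_A+\|c\|_\infty$, $\la_{\bar D}$ and $D$) come from the subsequent step.

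Finally I would invoke the Hopf-type mechanism of Theorem \ref{strong-max}(2), applied to the nonnegative supersolution-type bound on $w$: part 2) of that theorem already yields $\underline{\partial}_{\bn}u(\hat x)\ge a\,\inf_{x\in D_{r/2}}(u(\hat x)-u(x))$ with $a$ and $r$ depending only on $M_A$ and $\la_{\bar D}$; combining this with \eqref{plan-step}—after checking that $x_*$ can be taken inside $D_{r/2}$, or by running the barrier/Harnack argument localized to $B$ so that the relevant infimum is controlled by $w(x_*)$—produces \eqref{010104-20}. The main obstacle is the quantitative positivity of the Feynman–Kac correction term $\mathbb E_{x_*}[\int_0^{\tau_B}e^{-\int_0^t c\,ds}c(X_t)\,dt]$ and packaging it as a single nondecreasing function $\rho_{c,A,D}$ of the interior-ball radius: this requires a careful lower bound on the occupation measure of the killed process on the set $\{c>0\}$, using only the ellipticity constant on $\bar D$, the $L^\infty$ bounds encoded in $M_A+\|c\|_\infty$, and irreducibility, while isolating cleanly which constants are universal (absorbed into $a$ and $r_0$) and which must be allowed to depend on the triple $(A,c,D)$.
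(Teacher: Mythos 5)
Your overall architecture is the right one and matches the paper's: a Feynman--Kac lower bound that converts $\langle c\rangle_D>0$ into a strictly positive ``gauge-type'' quantity, followed by the Hopf barrier to pass to the normal derivative. However, the heart of your argument has a genuine gap. You run the representation for $w=u(\hat x)-u$ up to the exit time $\tau_B$ of the \emph{interior ball} $B$, and claim that
$\mathbb E_{x_*}\bigl[\int_0^{\tau_B}e^{-\int_0^t c}\,c(X_t)\,dt\bigr]>0$
because $\{c>0\}$ has positive Lebesgue measure in $D$. Under A3') the set $\{c>0\}$ need not meet $B$ at all (it only has positive measure somewhere in $D$), and the occupation measure of the process killed at $\tau_B$ is carried by $B$; so this expectation can vanish identically, and no Green-function lower bound for the killed-in-$B$ process can rescue it. The paper avoids this by working with the exit time $\tau_D$ from the whole domain: the same computation gives $u(\hat x)-u(x)\ge u(\hat x)\,w_{c,D}(x)$ for \emph{all} $x\in D$, where $w_{c,D}(x)=1-\mathbb E_x e_c(\tau_D)=\mathbb E_x\int_0^{\tau_D}e_c(t)c(X_t)\,dt$, and irreducibility of the resolvent on $D$ (plus the strong Feller property) shows $w_{c,D}$ is strictly positive and continuous on $D$. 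Positivity is then transported back to the annulus around $\hat x$ by a barrier comparison (the paper's Lemma \ref{lm010204-20}), with $\rho_{c,A,D}(r):=\inf_{D_r}w_{c,D}$.

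A second, related defect: your inequality \eqref{plan-step} is stated at a single point $x_*$, but Theorem \ref{strong-max}(2) requires a lower bound on $\inf_{x\in D_{r/2}}(u(\hat x)-u(x))$; a bound at one point of $D_{r/2}$ does not control that infimum. Once you replace $\tau_B$ by $\tau_D$ you get the bound $u(\hat x)-u(x)\ge u(\hat x)w_{c,D}(x)$ uniformly in $x$, and taking the infimum over $D_{r/2}$ yields exactly the quantity $\rho_{c,A,D}(r/2)u(\hat x)$ that is needed. (A minor further point: Theorem \ref{strong-max}(2) as stated uses a uniform radius and the uniform interior ball condition, whereas Theorem \ref{thm:main1} only assumes the pointwise interior ball condition; this is why the paper reruns the barrier computation at scale $r_0\wedge\frak r(\hat x)$ instead of citing part 2) verbatim.)
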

The proof of the theorem is presented in Section \ref{sec6.3}.

\begin{remark}
\label{rmk3.7}
{At this point we wish to emphasize  an important distinction between
the conclusions of Theorems  \ref{thm:main1a}  and
\ref{thm:main1}, see also Lemmas \ref{lm010204-20} and
\ref{lm010204-20z} below. Given a
fixed region $D$, the
constant $a$, appearing in the statements of both results,   can be
chosen uniformly for the class of operators of the form $A-c$, so long
as their appropriate coefficients satisfy  the uniform ellipticity
condition and   respective  lower and upper
bounds with the same
constants. On the other hand,
the   function $\rho_{c,A,D}(\cdot)$, appearing in Theorem
\ref{thm:main1} and defined in \eqref{eq.uewo.dism},  depends on the particular
operator $-A+c$  {through its Green function} (see formula \eqref{eq.wcd1}) and we do not claim that it can be
chosen uniformly with respect to
the   numerical
bounds on the coefficients. It is the price we  pay
for weakening of the assumption on $c$.
In Theorem \ref{thm:main1a} we have assumed that $c$ is bounded away from
zero (hypothesis A3'')), while in Theorem \ref{thm:main1} it sufficed only to
suppose  that $c$ is merely non-trivial (hypothesis  A3')).}
\end{remark}

\subsection{Harnack property and the Hopf lemma}
For operators $A$ having the {\em Harnack property}  and $A$-harmonic
functions we can replace the right hand of \eqref{010104-20a} by a
constant multiplied by $u(\hat x)-u(x_0)$ for a fixed $x_0\in D$.
To rigorously formulate the result we start with the notion of an $A$-harmonic function.
{\begin{definition}
Suppose that $p>d$ and $D\subset \bbR^d$ is open. 
We say that a function $u\in C_b(\BR^d)\cap W^{2,p}_{loc}(D)$ is
$A$-{\em harmonic} in $D$ iff 
 $Au=0$ a.e. in $D$.  
\end{definition}

 {\begin{definition}
\label{hnk}
We say that $A$ has the {\em Harnack property} in $D$  iff for any $r\in
(0,1]$ there exists $C>0$ such that for any $y_0\in D$ satisfying
$B(y_0,r)\subset D$
and a non-negative   $u\in C_b(\BR^d)$,  which is $A$-harmonic in $B(y_0,r)$, we have
\begin{equation}
\label{harnack}
u(x)\le Cu(y),\quad x,y\in B(y_0,r/2).
\end{equation}
\end{definition}
\begin{remark}
In general, the Harnack property need not hold for $A$.   {In fact}, some additional hypotheses on the L\'evy kernel $N$ are needed.
Some sufficient conditions  on  the kernel guaranteeing the Harnack
property of $A$ can be found in  \cite[Theorem 2.4, p. 25]{Foondun}.
\end{remark}

The following result is a straighforward conclusion from Theorem
\ref{strong-max} and the Harnack
property of $A$.
\begin{corollary}
\label{thm:harnack}
In addition to the assumptions made in Theorem \ref{strong-max}
suppose that $A$ has the Harnack
property  {in a bounded domain $D$}. Then, for any  $x_0\in D$  there
exists $a>0$ depending only on $M_A$, $\ga_{\bar{\cal O}_ D}$, ${\rm
  diam}\,D$
and  {$\delta_{D}(x_0)( ={\rm dist}(x_0,\partial D))$} such that
for any   {non-constant}  $u$, which is $A$-harmonic in $D$ and attains its maximum  {in} $\mathcal S(D)\cup\overline D$ at $\hat x\in\partial D$,
we have
\begin{equation}
\label{010104-20b}
\underline\partial_{\mathbf n}u(\hat x)\ge a(u(\hat x)-u(x_0))>0.
\end{equation}
\end{corollary}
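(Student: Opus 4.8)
The plan is to derive Corollary \ref{thm:harnack} directly from part 2) of Theorem \ref{strong-max} (the Hopf lemma) by controlling the quantity $\inf_{x\in D_{r/2}}(u(\hat x)-u(x))$ from below by $u(\hat x)-u(x_0)$, using the Harnack property of $A$ together with the strong maximum principle. First I would apply Theorem \ref{strong-max}, part 2): since $D$ satisfies the uniform interior ball condition (any bounded $C^{1,1}$-type hypothesis is not even needed here — we only need the interior ball condition, which is implicitly part of the standing assumptions; if not, I would note that for the corollary we must assume it, or remark that $A$-harmonic nonconstant $u$ already forces $\hat x\in\partial D$), there exist $r\in(0,r_D]$ and $a_1>0$, depending only on $M_A$ and $\lambda_{\overline D}$, such that
\[
\underline{\partial}_{\bn}u(\hat x)\ge a_1\inf_{x\in D_{r/2}}\bigl(u(\hat x)-u(x)\bigr)>0 .
\]
So it remains to bound $\inf_{x\in D_{r/2}}(u(\hat x)-u(x))$ below by a constant times $u(\hat x)-u(x_0)$, with the constant depending only on the listed data.

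Next I would work with the auxiliary function $w(x):=u(\hat x)-u(x)$. By the strong maximum principle (part 1) of Theorem \ref{strong-max}), applied to the subsolution $u$ — note $u$ is $A$-harmonic so it is both sub- and supersolution of $(-A+0)f=0$, i.e. $c\equiv 0$, which satisfies A3) — the maximum $M=u(\hat x)$ is attained at $\hat x\in\partial D$ and $u$ is not constant, hence $u(x)<u(\hat x)$ for all $x\in D$; thus $w>0$ on $D$. Moreover $Aw=0$ in $D$ because $A$ annihilates constants and $u$, so $w$ is a non-negative (indeed positive) $A$-harmonic function in $D$. Now cover the region between $x_0$ and $D_{r/2}$ by a finite chain of balls of radius $r$ contained in $D$: since $D$ is bounded, $\overline{D_{r/2}}$ is compact, and $D$ is connected, there is a Harnack chain joining $x_0$ to any point of $D_{r/2}$ whose length is bounded by a number $N$ depending only on $\mathrm{diam}\,D$, $\mathrm{dist}(x_0,\partial D)$ and $r$ (hence only on the permitted data). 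Iterating the Harnack inequality \eqref{harnack} along such a chain, which is legitimate since $w\ge 0$ and is $A$-harmonic on each ball $B(y_i,r)\subset D$, yields a constant $C_N>0$, depending only on the Harnack constant $C$ (which in turn depends only on $r$ and the characteristics of $A$, hence on $M_A$) and on $N$, such that $w(x_0)\le C_N\, w(x)$ for every $x\in D_{r/2}$. Equivalently $\inf_{x\in D_{r/2}}w(x)\ge C_N^{-1}w(x_0)=C_N^{-1}(u(\hat x)-u(x_0))$.

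Combining the two displays gives
\[
\underline{\partial}_{\bn}u(\hat x)\ \ge\ a_1\, C_N^{-1}\,\bigl(u(\hat x)-u(x_0)\bigr)\ =:\ a\,\bigl(u(\hat x)-u(x_0)\bigr),
\]
and the right-hand side is strictly positive because $u(x_0)<u(\hat x)$ by the strong maximum principle. The constant $a=a_1 C_N^{-1}$ depends only on $M_A$, $\lambda_{\overline D}$, $\mathrm{diam}\,D$ and $\mathrm{dist}(x_0,\partial D)$, as claimed. I would also double-check the harmless degenerate case: if $D_{r/2}=\emptyset$ one shrinks $r$ (at the cost of worsening constants but keeping the dependence acceptable), or observes $x_0\in D_{r/2}$ for $r$ small enough since $\mathrm{dist}(x_0,\partial D)>0$ is fixed.

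The main obstacle I anticipate is the Harnack chaining argument: one must make sure the chain of balls stays inside $D$ (so that the Harnack property is applicable on each ball, since \eqref{harnack} requires $B(y_0,r)\subset D$) while joining $x_0$ to an \emph{arbitrary} point of $D_{r/2}$, and must verify that the number of balls needed can be bounded purely in terms of $\mathrm{diam}\,D$, $\mathrm{dist}(x_0,\partial D)$ and the radius — not in terms of finer geometry of $\partial D$. The cleanest way is: for $x\in D_{r/2}$ the straight segment from $x$ to a point $\tilde x_0$ near $x_0$ may exit $D$, so instead one uses that $D$ is connected and open to build a polygonal path inside $D$, then thickens it; a standard compactness/Vitali-covering estimate shows $N\lesssim (\mathrm{diam}\,D/r)^{d}$ suffices, uniformly. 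One small subtlety worth stating explicitly is that the Harnack property as defined (Definition \ref{hnk}) is for functions $A$-harmonic on the small ball $B(y_0,r)$ and bounded continuous on all of $\BR^d$, which $w$ indeed is since $u\in C_b(\BR^d)$; so no regularity is lost. Everything else is bookkeeping.
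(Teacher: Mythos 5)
Your proposal is correct and follows essentially the same route as the paper: first invoke part 2) of Theorem \ref{strong-max} to reduce to bounding $\inf_{x\in D_{r/2}}(u(\hat x)-u(x))$ from below, then apply the Harnack property to the non-negative $A$-harmonic function $u(\hat x)-u(x)$ via a finite covering of $\bar D_{r/2}$ by balls of radius $r$. Your write-up merely spells out the Harnack chaining in more detail than the paper's two-line argument.
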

\proof
{By Theorem \ref{strong-max} we have estimate \eqref{010104-20a} that
holds for some positive constants $a$ and $r$ depending only on  
$M_A$ and $\ga_{\bar{\cal O}_ D}$. Let
\begin{equation}
\label{rp}
r':=\min\left\{\frac{1}{2}\delta_{D}(x_0),r,1\right\}.
\end{equation}
 Then \eqref{010104-20a} holds also with $D_r$ replaced by $D_{r'}$. Clearly $D$ can be contained in a $d$-dimensional box
of  sidelength $ {\rm
  diam}\,D$. 
  We can cover the box with 
  \[
  N=\Big\{\big[4{\rm
  diam}\,D \sqrt{d}/r'\big]+1\Big\}^{d}
  \]
 balls of radius $r'/4$,
   where $[
\cdot]$ is the integer part of a number (the floor function). 
Let ${\cal G}$ be the family of these  balls which have a non-empty intersection with $\bar
D_{r'}$. Obviously $2B\subset D$ for each $B\in \mathcal G$.  Let $x\in D_{r'}$. 
Since $D$ is connected there exist $n\ge 1$ and $B_1,\dots,B_n\in\mathcal G$
such that $x_0\in B_1, x\in B_n$ and $B_i\cap B_{i+1}\neq\emptyset$.
Let $y_i\in B_i\cap B_{i+1},\, i=1,\dots,n-1$.
After an application of the Harnack
inequality \eqref{harnack}   to the  non-negative
harmonic function $u(\hat x)-u(x)$ on each $B_i,\, i=1,\dots,n$, we conclude that
\[
\inf_{y\in B_{i+1}}(u(\hat x)-u(y))\ge C(r')\inf_{y\in B_{i}}(u(\hat x)-u(y)),\quad i=1,\dots,n-1,
\]
and 
\[
u(\hat x)-u(x)\ge C(r')\inf_{y\in B_{n}}(u(\hat x)-u(y)),\quad \inf_{y\in B_{1}}(u(\hat x)-u(y))\ge C(r')(u(\hat x)-u(x_0)).
\]
Here $1/C(r')$ is the constant appearing in \eqref{harnack} that
corresponds to $r'$ given in \eqref{rp}.
Thus
\[
u(\hat x)-u(x)\ge \min_{i=1,\dots,n}\inf_{y\in B_i}\big(u(\hat x)-u(y)\big)\ge [C(r')\wedge 1]^N(u(\hat x)-u(x_0)).
\] 
Taking the infimum over $x\in D_{r'}$ in the above inequality  and
using   \eqref{010104-20a} we conclude \eqref{010104-20b}.}
\qed

\section{Probabilistic preliminaries}

\label{sec4}

Throughout this section we shall assume that the hypotheses A1) -- A3)
hold.

 \subsection{On the martingale problem associated with operator $A$} 

\label{SMM}

 Let ${\cal D}$ be the
space consisting of all 
functions $\om: [0,+\infty)\to\BR^d$, that are right continuous and
possess the left limits for all  {$t>0$} (c\'{a}dlags), equipped with
the Skorochod topology, see e.g. Section 12 of \cite{bil}. 
Define the canonical process $X_t(\om):=\om(t)$, $\om \in {\cal D}$
and its natural filtration $({\cal F}_t)$, with ${\cal
  F}_t:=\si\left(X_s,\,0\le s\le t\right)$.

\begin{definition}[A solution of the martingale problem associated
  with the operator $A$]
Suppose that $\mu$ is a Borel probability measure on $\bbR^d$.  A
Borel  probability 
measure $P_{\mu}$ on $ {\cal D} $ is called a {\em solution of the martingale problem}  associated
  with $A$ with the initial distribution $\mu$, cf \cite{Stroock}, if
\begin{itemize}
\item[i)] $P_{\mu}[X_0\in Z]=\mu[Z]$ for any Borel measurable
 $Z\subset \bbR^d$.
\item[ii)] For every $f\in C_b^2(\BR^d)$
- a $C^2$-smooth function bounded with its two derivatives on $\R^d$ - the process
\begin{equation}
\label{Mtf}
M_t[f]:= f(X_t)-f(X_0)-\int_0^tAf(X_r)\,dr,\, t\ge 0
\end{equation}
is a  (c\'{a}dlag)  martingale under measure $P_\mu$ with respect to  {natural filtration
$({\cal F}_t)_{t\ge0}$}. 
As usual we
write $P_x:=P_{\delta_x}$,  $x\in\bbR^d$ and say that $x$ is the
initial condition. The expectations   with respect to $P_{\mu}$ and
$P_x$ shall be denoted by $\E_{\mu}$ and $\E_x$, respectively.
\end{itemize}
\end{definition}

\begin{definition}[A strong Markovian solution of the martingale problem]
We say that a family of Borel probability measures
  $(P_x)_{x\in\bbR^d}$   on
  ${\cal D} $ is  a strong Markovian solution to the martingale problem  associated with $A$
if:
\begin{itemize}
\item[i)] each $P_x$ is a solution of the martingale problem
  associated with $A$, corresponding to the
initial condition at $x$,

\item[ii)] the canonical  process $(X_t)$ is 
  strongly Markovian with respect to the natural filtration $({\cal
    F}_t)$ and the family $(P_x)_{x\in\bbR^d}$,
\item[iii)] the mapping $x \to P_x[C]$ is measurable for any Borel
  $C\subset{\cal D}$,
\item[iv)] for any Borel probability measure $\mu$ on $\bbR^d$  the probability measure
 $$
P_{\mu}(\cdot) :=\int_{\bbR^d}
P_x (\cdot) \mu(dx)
$$
is a solution to the martingale problem associated with $A$ with the
initial distribution $\mu$.
\end{itemize}
\end{definition}

\bigskip

The following result concerning the existence of a strong Markov
solution for a martingale problem has been  proven in \cite{AP}, see also
\cite{LM}.
\begin{theorem}
Suppose that conditions \eqref{la-K} and \eqref{MA} are
satisfied. Then, 
the martingale problem associated with the operator $A$ admits a strong Markovian solution.
\end{theorem}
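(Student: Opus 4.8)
The plan is to reduce the statement to the \emph{well-posedness} of the martingale problem, i.e.\ to (a) existence of at least one solution $P_x$ for every initial point $x\in\bbR^d$, and (b) its uniqueness, and then to quote the general structural theory of martingale problems: once uniqueness holds for every starting point, the map $x\mapsto P_x[C]$ is Borel for each Borel $C\subset{\cal D}$, the canonical process is strong Markov with respect to $({\cal F}_t)$ and $(P_x)$, and the mixtures $P_\mu=\int_{\bbR^d}P_x\,\mu(dx)$ solve the martingale problem with initial law $\mu$; this is exactly the content of the regular-conditional-probability and concatenation arguments of Stroock and Varadhan (cf.\ \cite{Stroock}), which carry over verbatim to L\'evy-type generators. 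Thus items ii)--iv) of the definition are automatic from (a)--(b), and the whole proof rests on establishing the latter.

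\emph{Existence.} I would argue by approximation and tightness. Mollify $q_{i,j}$ and $b_i$ into smooth, uniformly bounded $q_{i,j}^{(n)},b_i^{(n)}$ that stay uniformly elliptic on each fixed compact set (with the same constant up to an $o(1)$ error), and replace $N(x,\cdot)$ by a kernel $N^{(n)}(x,\cdot)$ smooth in $x$, supported in $\{|y|\le n\}$, of bounded total mass, with $N_*^{(n)}\le N_*$ and $N^{(n)}\to N$ in the natural sense. For these regular data the martingale problem for $A^{(n)}=L^{(n)}+S^{(n)}$ is classically solvable (e.g.\ via the associated SDE driven by a Poisson random measure, or by classical parabolic PDE theory). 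Using that $M_t[f]$ is a martingale for coordinate-type $f$ and that $A^{(n)}f$ is bounded by a multiple of $M_A$, one obtains uniform moment bounds $\bbE^{(n)}_x|X_t|^2\le C(1+|x|^2+t)$ and, by Aldous' criterion, tightness of $(P^{(n)}_x)_n$ on ${\cal D}$. Passing to a weak limit $P_x$ along a subsequence, the only delicate point is to check that $M_t[f]=f(X_t)-f(X_0)-\int_0^tAf(X_r)\,dr$ remains a $P_x$-martingale for $f\in C_b^2(\bbR^d)$: the martingale structure passes to the limit, and $\int_0^tA^{(n)}f(X^{(n)}_r)\,dr\to\int_0^tAf(X_r)\,dr$ because $A^{(n)}f\to Af$ a.e.\ while a uniform Krylov-type bound $\sup_n\bbE^{(n)}_x\int_0^t|g(X_r)|\,dr\le C_t\|g\|_{L^d(\bbR^d)}$ — available precisely because the diffusion part is non-degenerate — upgrades a.e.\ convergence to $L^1$ convergence of the time integrals.

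\emph{Uniqueness, and the main obstacle.} This is the heart of the matter. Since uniform ellipticity holds only on compacts, first localize: by the Stroock--Varadhan localization lemma, uniqueness for $A$ on $\bbR^d$ reduces to uniqueness for operators $A_m$ agreeing with $A$ on $B(0,m)$ but globally uniformly elliptic (with jump kernels still obeying $N_*<\infty$), together with non-explosion of the constructed solutions, which follows from the quadratic moment bound above. For a globally uniformly elliptic $A_m$ with merely bounded measurable data one runs the classical resolvent argument: the Aleksandrov--Calder\'on--Zygmund $L^p$-theory gives, for $p>d$ and $\lambda$ large, that $\lambda-L_m$ maps $W^{2,p}$ boundedly and invertibly onto $L^p$, and the estimate $\|U[v]\|_{L^p}\le C\|\nabla^2v\|_{L^p}$ quoted from \cite{bony1} in Section \ref{sec2.1} yields $\|S_m(\lambda-L_m)^{-1}\|_{L^p\to L^p}<1$ for $\lambda$ large; hence $\lambda-A_m=(\lambda-L_m)\bigl(I-(\lambda-L_m)^{-1}S_m\bigr)$ is invertible and $(\lambda-A_m)^{-1}$ maps $L^p\cap C_b$ into $W^{2,p}_{\rm loc}\cap C_b$. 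Inserting $f=(\lambda-A_m)^{-1}g$ into \eqref{Mtf} shows that, for \emph{any} solution $Q$ of the $A_m$-martingale problem started at $x$, $\bbE_Q\int_0^\infty e^{-\lambda t}g(X_t)\,dt$ equals the fixed number $f(x)$ for every $g$ in a class dense enough to determine one-dimensional marginals; a Markovian concatenation then forces all finite-dimensional marginals of two solutions to coincide. The real work, I expect, lies exactly in this step: making the $L^p$-resolvent estimate and the contraction $\|S(\lambda-L)^{-1}\|<1$ rigorous for genuinely discontinuous coefficients and a general (possibly singular near $0$, not absolutely continuous) L\'evy kernel, and verifying that the resolvent indeed lands in the Sobolev class needed for $\int_0^\cdot Af(X_r)\,dr$ in \eqref{Mtf} to be well defined along every solution. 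Once (a) and (b) hold, the strong Markov property, the measurability of $x\mapsto P_x$, and the consistency of the mixtures $P_\mu$ follow automatically, completing the proof.
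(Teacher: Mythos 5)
The paper does not prove this theorem at all: it is quoted verbatim from Anulova--Pragarauskas \cite{AP} (see also Lepeltier--Marchal \cite{LM}), so any comparison is with the argument in those references rather than with anything in the present text. Measured against that, your proposal has a genuine structural gap. You reduce the statement to \emph{well-posedness} of the martingale problem and then invoke the Stroock--Varadhan machinery that derives the strong Markov property, measurability of $x\mapsto P_x$, and consistency of the mixtures from \emph{uniqueness}. But the theorem only asserts existence of \emph{a} strong Markovian solution, and in the generality assumed here --- leading coefficients $q_{i,j}$ that are merely bounded and Borel measurable, uniformly elliptic only on compacts, and a L\'evy kernel constrained only by \eqref{Nx1} --- uniqueness is not available and is known to fail in general for $d\ge 3$ with discontinuous diffusion coefficients (Nadirashvili's counterexample). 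Your own uniqueness step betrays this: the invertibility of $\lambda-L$ from $W^{2,p}$ onto $L^p$ and the contraction bound for $S(\lambda-L)^{-1}$ require continuity (or at least VMO regularity) of the $q_{i,j}$; for merely measurable coefficients one only has Krylov-type estimates, which do not yield the resolvent identity you need to pin down the one-dimensional marginals. So the ``heart of the matter'' as you frame it cannot be carried out, and with it items ii)--iv) of the definition, which you declared ``automatic,'' lose their foundation.

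The route actually taken in \cite{AP} and \cite{LM} (following Krylov's Markov selection procedure, cf.\ also \cite{Stroock}) is different in kind: one first shows that the set of solutions $P_x$ is nonempty for each $x$ (your existence paragraph, via regularization, tightness on ${\cal D}$, and passage to the limit in \eqref{Mtf} using the uniform Krylov bound, is essentially the right argument for this half and is sound), and then one performs a \emph{measurable selection} among the possibly many solutions --- iteratively maximizing a countable family of resolvent-type functionals --- to extract a sub-family $(P_x)_{x\in\bbR^d}$ that is measurable in $x$ and strong Markov, without ever proving uniqueness. If you want to salvage your write-up, keep the existence half and replace the entire uniqueness discussion by the selection argument.
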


\subsection{Analytic description of the canonical process and weak subsolution to (\ref{eq3.1a})}

\subsubsection{Exit time, transition probability semigroup,
  resolvent operator} 

\label{ETS}

For a given domain $D$ define 
the exit time $\tau_D:{\cal D}\to[0,+\infty]$ of the canonical process $(X_t)_{t\ge0}$ from $D$ as
\begin{equation}
\label{tau-D}
\tau_D:=\inf[t>0:\,X_t\not\in D].
\end{equation}
It is a stopping time,
i.e. for any
$t\ge0$ we have $[\tau_D\le t]\in {\cal F}_t$, see Theorem I.10.7, p. 54 of
\cite{bg} and Theorem IV.3.12, p. 181 of \cite{ethier-kurtz}. 
\begin{proposition}
\label{lm010104-20}
Suppose that $D$ is a bounded domain. Then, 
\begin{equation}
\label{012904-20}
\sup_{x\in D}\E_x\tau_D<+\infty.
\end{equation}
\end{proposition}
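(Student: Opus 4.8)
The plan is to use a classical barrier/Lyapunov function argument combined with the martingale property in the definition of the martingale problem. First I would construct a smooth function $f\in C_b^2(\bbR^d)$ that is bounded and satisfies $Af(x)\le -1$ for all $x$ in a fixed bounded neighborhood of $\bar D$. For a purely differential operator one takes $f(x)=-e^{\mu x_1}$ (or $f(x)=-\cosh(\mu x_1)$) on a slab containing $D$: since $D$ is bounded, after a translation we may assume $x_1\ge 1$ on $\bar D$, and then $Lf(x) = -\tfrac12 q_{11}(x)\mu^2 e^{\mu x_1} - b_1(x)\mu e^{\mu x_1}$, which using uniform ellipticity on the compact set $\bar D$ (so $q_{11}(x)\ge\lambda_{\bar D}>0$) and the $L^\infty$ bounds $M_A$ on the coefficients becomes $\le -1$ once $\mu$ is chosen large. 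The new feature here is the nonlocal part $Sf$: with $f(x)=-e^{\mu x_1}$ one estimates, using \eqref{Nx1} and the trivial bound $|f(x+y)-f(x)-\sum_i \tfrac{y_i\partial_{x_i}f(x)}{1+|y|^2}|\preceq \|f\|_{C^2}\min\{1,|y|^2\}$ on the relevant region, that $|Sf(x)|\le C(\mu)N_*$ uniformly in $x$ ranging over $\bar D$; hence enlarging $\mu$ (or rescaling $f$) still yields $Af(x)\le -1$ on $\bar D$. Actually, to keep $f$ globally in $C_b^2$, it is cleaner to take $f$ to be a bounded smooth function that coincides with $-e^{\mu x_1}$ on a neighborhood of $\bar D$ and is extended arbitrarily (bounded, $C^2$) elsewhere; the inequality $Af\le -1$ is only needed on $D$.

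Next, the core step: apply the martingale property to $f$. By definition, $M_t[f]=f(X_t)-f(X_0)-\int_0^t Af(X_r)\,dr$ is an $(\FF_t)$-martingale under $P_x$. Since $\tau_D\wedge t$ is a bounded stopping time, the optional stopping theorem gives $\E_x\big[f(X_{\tau_D\wedge t})\big] = f(x) + \E_x\int_0^{\tau_D\wedge t} Af(X_r)\,dr \le f(x) - \E_x[\tau_D\wedge t]$, where in the last inequality I used that for $r<\tau_D$ we have $X_r\in D$ and hence $Af(X_r)\le -1$. Rearranging, $\E_x[\tau_D\wedge t]\le f(x)-\E_x[f(X_{\tau_D\wedge t})]\le 2\|f\|_\infty$, a bound independent of $t$ and of $x\in D$. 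Letting $t\to\infty$ and invoking the monotone convergence theorem yields $\E_x\tau_D\le 2\|f\|_\infty$ for all $x\in D$, which is \eqref{012904-20}.

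The main obstacle I anticipate is the control of the nonlocal term $Sf$ uniformly in $x$, and ensuring the chosen $f$ genuinely lies in $C_b^2(\bbR^d)$ so that it is admissible in part ii) of the definition of the martingale problem — a globally defined $-e^{\mu x_1}$ is unbounded, so one must either localize it as indicated above, or replace it by something like $f(x)=1-e^{-\mu x_1}$ on the slab $\{x_1\ge 1\}$ which is bounded there; either way the jumps $X_r+y$ can leave the region where the good sign of $Af$ holds, so one must check that enlarging the slab (using boundedness of $D$ together with $N_*<\infty$, which forces the far-away mass of $N(x,\cdot)$ to be controlled only in the $\min\{1,|y|^2\}$ sense) still gives $Af\le -1$ on $D$. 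A secondary minor point is the integrability needed to justify optional stopping — this is immediate since $f$ is bounded and $Af$ is bounded on $D$ (by $M_A\|f\|_{C^2}$-type estimates), so $\int_0^{\tau_D\wedge t}Af(X_r)\,dr$ is bounded by $(M_A\|f\|_{C^2})\,(\tau_D\wedge t)$ which is integrable for each fixed $t$.
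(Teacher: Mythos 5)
Your overall strategy (a Lyapunov barrier $f$ with $Af\le-1$ on $D$, optional stopping of $M_t[f]$, then monotone convergence) is the standard route to this result; the paper itself gives no proof and simply cites \cite[Lemma 4]{Foondun}, which is in the same spirit. The optional-stopping and limiting steps in your second paragraph are correct as written. An even shorter route, by the way, is available inside the paper: the Krylov-type estimate \eqref{anulova}/\eqref{R-bdd} with $f\equiv 1$ gives $\sup_{x\in D}\E_x\tau_D\le C\,m(D)^{1/d}$ at once.

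The gap is in the construction of the barrier, exactly at the point you flag as the main obstacle. The crude bound $|Sf(x)|\le C\|f\|_{C^2}N_*$ has a constant of order $\mu^2e^{\mu M}$, where $M=\sup\{x_1:x\in\operatorname{supp}f\}$, while the gain from the local part at a point $x\in D$ is only of order $\lambda_{\bar D}\,\mu^2e^{\mu x_1}$, which at points with $x_1$ near $m:=\inf_{\bar D}x_1$ is smaller than the loss by the factor $e^{\mu(M-m)}$. Hence for large $\mu$ the crude bound gives $Af>0$ somewhere in $D$ (unless $N_*$ happens to be small compared with $\lambda_{\bar D}$, which is not assumed), and neither ``enlarging $\mu$'' nor rescaling $f$ (which multiplies both sides of $Af\le-\eps$ by the same constant) repairs this. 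The standard fix is to exploit the sign structure of the barrier rather than its $C^2$ norm: take $g=-f$ equal to $e^{\mu x_1}$ on a $\delta$-neighbourhood of $\bar D$, extended to be non-negative and $C^2_b$ on $\bbR^d$. For $x\in D$ and $|y|\le\delta$, convexity of $e^{\mu x_1}$ makes the second-order increment in $Sg$ non-negative, leaving only the correction $\nabla g(x)\cdot y\,|y|^2/(1+|y|^2)$, which is bounded below by $-\mu e^{\mu x_1}\delta N_*$; for $|y|>\delta$ one drops the non-negative term $g(x+y)$ and bounds the rest by $-(g(x)+|\nabla g(x)|)N_*\delta^{-2}\ge -C\mu e^{\mu x_1}N_*\delta^{-2}$. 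Both losses are $O(\mu e^{\mu x_1})$, evaluated at the same point $x$, and are therefore dominated by the term $\tfrac12\lambda_{\bar D}\mu^2e^{\mu x_1}$ coming from $L$ once $\mu$ is large. With this replacement your proof closes.
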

\begin{proof}
See e.g.  \cite[Lemma 4]{Foondun}.
\end{proof}

We let
\begin{equation}
\label{PDt}
P^D_tf(x):=\E_x\left[f(X_t)1_{[t<\tau_D]}\right]\quad t\ge 0,\quad f\in B_b(D).
\end{equation}
This is a probabilistic formula for the semigroup generated by
operator (\ref{011703-20}) on $D$ with the null exterior condition.

For a positive $f\in B(D)$ and $\al\ge 0$ we let
\begin{equation}
\label{RDt}
R^D_\alpha f(x):=\int_0^\infty e^{-\alpha t}P^D_tf(x)\,dt =\E_x\left[\int_0^{\tau_D}e^{-\alpha r}f(X_r)\,dr\right],\quad x\in D.
\end{equation}
Set $R^D:=R^D_0$. We have $R^D_\alpha f(x)<+\infty$, $x\in D$ for
$f\in B_b(D)$ and $\alpha>0$.

\subsubsection{The gauge function for $c(\cdot)$ and domain $D$}

\label{sec4.4}

\begin{definition}[The gauge function for $c(\cdot)$ and domain $D$]
\label{df4.2}
The function
\begin{equation}
\label{eq2.pvf}
v_{c,D}(x)=\E_xe_{c}(\tau_D),\quad x\in \BR^d,
\end{equation}
where, {the random functional}, 
\begin{equation}
\label{ecD}
e_{c}(t):=e^{-\int_0^{t}c(X_r)\,dr},\quad t\ge0,
\end{equation}
is called the {\em gauge function} corresponding to $c(\cdot)$ and domain $D$, cf Section 4.3 of \cite{cz}.
\end{definition}
Let us denote
\begin{equation}
\label{wcD}
w_{c,D}(x):=1-v_{c,D}(x),\quad x\in \BR^d.
\end{equation}
Obviously  $w_{c,D}(x) \ge0$, $x\in \BR^d$. In fact, as it turns out,
under the assumptions made in the present paper $w_{c,D}(x)$
is strictly positive and continuous in $D$, see Lemma
\ref{lm010809-20} below.

\subsection{Feynman-Kac formula  for $(X_t)$}

\begin{proposition}
\label{prop010204-20} 
\begin{enumerate}
\item[1)]
 {If $u\in W^{2,p}_{loc}(D)$ and $p>d$, then 
 for any  {$V\Subset D$} and $t\ge 0$
 \begin{equation}
\label{010204-20dfg1c}
u(x)=\E_x\left[e_c(\tau_V\wedge t)u(X_{\tau_V\wedge t})\right]+\E_x\left[\int_0^{\tau_V\wedge t}e_c(s)(-Au+cu)(X_s)\,ds\right],\quad
x\in  V.
\end{equation}
 Moreover, \eqref{010204-20dfg1c} holds also with $\tau_V$ in place of $\tau_V\wedge t$.}
 \item[2)]
 If $u$ is a subsolution of  \eqref{eq3.1a}, then
\begin{equation}
\label{010204-20dfg}
u(x)=\E_x\left[e_c(\tau_D\wedge t)u(X_{\tau_D\wedge t})\right]+\E_x\left[\int_0^{\tau_D\wedge t}e_c(s)(-Au+cu)(X_s)\,ds\right],\quad
x\in  D.
\end{equation}
for any $t\ge 0$. Moreover, {if $D$ is bounded, then} \eqref{010204-20dfg} holds also with $\tau_D$ in place of $\tau_D\wedge t$.
\end{enumerate}
\end{proposition}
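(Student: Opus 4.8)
The statement is a Feynman--Kac / It\^o formula for (weak) subsolutions, so the plan is to prove part 1) first and then deduce part 2) by localization and a monotone passage to the limit. The natural route for 1) is to apply It\^o's formula to $e_c(t)u(X_t)$ and take expectations, using the martingale property built into the definition of the martingale problem. The main technical nuisance is that $u$ is only in $W^{2,p}_{\mathrm{loc}}(D)$, not $C^2$, so It\^o's formula cannot be applied directly; we need a mollification argument.

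\textbf{Step 1: the case $u\in C_b^2(\BR^d)$.} For smooth $u$, write $Y_t:=e_c(t\wedge\tau_V)u(X_{t\wedge\tau_V})$. By the product rule and the definition of $M_t[u]$ in \eqref{Mtf}, applied under $P_x$, and using that $de_c(t)=-c(X_t)e_c(t)\,dt$ together with optional stopping at the bounded stopping time $t\wedge\tau_V$, one gets
\begin{equation}
\notag
\E_x\big[e_c(t\wedge\tau_V)u(X_{t\wedge\tau_V})\big]=u(x)+\E_x\Big[\int_0^{t\wedge\tau_V}e_c(s)\big(Au-cu\big)(X_s)\,ds\Big].
\end{equation}
Rearranging gives \eqref{010204-20dfg1c} for smooth $u$. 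Here one uses $V\Subset D$ only to ensure $\tau_V<\infty$ a.s.\ (in fact $\E_x\tau_V<\infty$ by Proposition \ref{lm010104-20}), so all the integrals are finite, and the local martingale terms are genuine martingales because the integrands are bounded on the compact set $\bar V$.

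\textbf{Step 2: mollification for $u\in W^{2,p}_{\mathrm{loc}}(D)$.} Fix $V\Subset W\Subset D$ and let $u_n:=u*\rho_{1/n}$ be standard mollifications, which are smooth on $W$ for $n$ large and satisfy $u_n\to u$ in $W^{2,p}(W)$, hence $Au_n\to Au$ in $L^p(W)$ (using the Bony estimate quoted in the excerpt for the nonlocal part, together with the bound $N_*<\infty$ and dominated convergence for the tail of $S$). Modify each $u_n$ outside $W$ to belong to $C_b^2(\BR^d)$ without changing it on $\bar V$; this does not affect $\tau_V$ or the values at $X_{t\wedge\tau_V}$. Apply Step 1 to $u_n$ and pass to the limit: the boundary term converges by bounded convergence (the $u_n$ are uniformly bounded on $\bar W$ and $u_n\to u$ uniformly on $\bar V$ by Sobolev embedding since $p>d$), and the integral term converges because
\begin{equation}
\notag
\E_x\Big[\int_0^{t\wedge\tau_V}e_c(s)\big|(Au_n-cu_n)-(Au-cu)\big|(X_s)\,ds\Big]\le \E_x\Big[\int_0^{\tau_V}\mathbf 1_{X_s\in V}\,g_n(X_s)\,ds\Big]
\end{equation}
with $g_n\to 0$ in $L^p(V)$; this expectation equals $\int_V g_n\,dG^V(x,\cdot)$ where $G^V$ is the Green function of $R^V$, and $G^V(x,\cdot)\in L^{p'}(V)$ (boundedness of the Green operator on a bounded set), so the right-hand side tends to $0$ by H\"older. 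This yields \eqref{010204-20dfg1c}; replacing $t\wedge\tau_V$ by $\tau_V$ is a further limit $t\to\infty$, justified by $\tau_V<\infty$ a.s., boundedness of $u$ and $\E_x\tau_V<\infty$ controlling the integral term.

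\textbf{Step 3: from $V\Subset D$ to $D$ (part 2).} For a subsolution $u$, take an exhausting sequence $V_n\Subset V_{n+1}\Subset D$ with $\bigcup_n V_n=D$, so $\tau_{V_n}\uparrow\tau_D$ $P_x$-a.s. Apply \eqref{010204-20dfg1c} with $V=V_n$ and $\tau_{V_n}\wedge t$. In the boundary term $e_c(\tau_{V_n}\wedge t)u(X_{\tau_{V_n}\wedge t})\to e_c(\tau_D\wedge t)u(X_{\tau_D\wedge t})$ pointwise (right-continuity of $X$, continuity of $u$), and it is bounded by $\|u\|_\infty$, so bounded convergence applies. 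In the integral term $(-Au+cu)(X_s)\le 0$ for a.e.\ $s<\tau_D$ by \eqref{subsol} (the set of bad times has zero Lebesgue-times-$P_x$ measure, again via the Green function being absolutely continuous w.r.t.\ Lebesgue measure), so monotone convergence gives the limit, which is finite because the left side minus the boundary term is bounded. This produces \eqref{010204-20dfg} with $\tau_D\wedge t$; the version with $\tau_D$ follows by letting $t\to\infty$ using $\E_x\tau_D<\infty$ (Proposition \ref{lm010104-20}) to dominate the integral term and $\|u\|_\infty$ for the boundary term.

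\textbf{Main obstacle.} The genuinely delicate point is Step 2: controlling the error $\E_x[\int_0^{\tau_V}\mathbf 1_{X_s\in V}g_n(X_s)\,ds]$ when $g_n\to 0$ only in $L^p$, not uniformly. This requires knowing that the occupation measure $f\mapsto \E_x\int_0^{\tau_V}\mathbf 1_{X_s\in V}f(X_s)\,ds$ is absolutely continuous with respect to Lebesgue measure with a density (Green function) lying in a dual Lebesgue space — equivalently, $R^V$ maps $L^p(V)$ boundedly into $C_b(V)$ for $p>d$ — a regularity fact for the semigroup $(P^V_t)$ that must be cited or established from the strong Feller / ultracontractivity properties of the process. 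A secondary subtlety is the $L^p$-convergence $Au_n\to Au$ for the nonlocal operator, which needs the Bony estimate quoted in the paper applied uniformly in $n$ plus a dominated-convergence argument in $y$ using \eqref{Nx1}.
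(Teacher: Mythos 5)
Your proposal follows essentially the same route as the paper: approximate $u$ by smooth functions converging in $W^{2,p}$ on compacts, apply It\^o's formula together with the martingale property from the definition of the martingale problem, pass to the limit in the occupation-time integral via the Krylov estimate $\sup_{x\in\bar V}\E_x\int_0^{\tau_V}|f(X_t)|\,dt\le C\|f\|_{L^d(V)}$ (your Green-function/H\"older argument is exactly this, and the paper cites it from Anulova--Pragarauskas and Lepeltier--Marchal), then exhaust $D$ by $V_n\Subset D$ and use the sign of $-Au+cu$ with monotone convergence. One small correction: since $\tau_{V_n}\wedge t$ increases to $\tau_D\wedge t$, right-continuity of the c\`adl\`ag path gives $X_{\tau_{V_n}\wedge t}\to X_{(\tau_D\wedge t)-}$, not $X_{\tau_D\wedge t}$; the convergence of the boundary term requires \emph{quasi-left continuity} of the process (as the paper invokes, via Theorem IV.3.12 of Ethier--Kurtz), not right-continuity.
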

The proof of this result is given in Section \ref{sec3.1} below.

\subsection{Principal eigenvalue and eigenfunction associated with
  $A$} 

 The following result is the version of the Krein-Rutman theorem for the
operators considered in Section \ref{sec2.1}.
\begin{theorem}
\label{thm020809-20}
Suppose that the domain $D$ is bounded. Then, there exists a unique pair $(\varphi_D, \lambda_D)$, where
$\varphi_D:D\to(0,+\infty)$ -  a strictly positive continuous function
- and  $\lambda_D>0$
are such that {$\int_D\varphi_Ddx=1$ and}
\begin{equation}
\label{eigen}
e^{-\lambda_D t}\varphi_D(x)=P^D_t\varphi_D(x),\quad t\ge 0,\,x\in D.
\end{equation}
\end{theorem}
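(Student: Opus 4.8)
The plan is to realize the semigroup $(P^D_t)_{t\ge0}$ as a strongly positive, compact operator family on a suitable Banach space and to invoke the classical Krein–Rutman theorem. First I would fix the ambient space: since $D$ is bounded, Proposition \ref{lm010104-20} gives $\sup_{x\in D}\E_x\tau_D<\infty$, so the resolvent $R^D=R^D_0$ maps $B_b(D)$ into $B_b(D)$, and in fact (using the probabilistic representation together with the strong Feller property and irreducibility of $(X_t)$ alluded to in the introduction) $R^D$ maps $B_b(D)$ into $C_0(D)$ — the continuous functions vanishing on $\partial D$ in the appropriate sense — or at least into $C_b(D)$. I would then work on $E:=C_0(D)$ (or a closed subspace thereof adapted to the non-local exterior behaviour), with the positive cone $E^+$ of non-negative functions, which is normal and has nonempty interior when $D$ has enough regularity; if not, I would instead use an $L^2(D,m)$-type space with $m$ an equivalent reference measure and the associated cone, which is total.

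Second, I would establish \emph{compactness}. The key point is that $R^D_\alpha$, for $\alpha>0$, is a compact operator on $E$: this follows from interior elliptic estimates for the differential part $L$ (uniform ellipticity on compacts, Bony's $W^{2,p}$ estimate quoted in Section \ref{sec2.1}) combined with the bound \eqref{Nx1} controlling the non-local part, which together give equicontinuity of $\{R^D_\alpha f:\|f\|_\infty\le1\}$ on compact subsets of $D$, plus a tightness/vanishing-near-$\partial D$ estimate from $\sup_x\E_x\tau_D<\infty$. By the resolvent identity, compactness of one $R^D_\alpha$ transfers to all of them and, via $P^D_t=\lim$ of resolvent-type expressions (or directly, since $P^D_t = \alpha R^D_\alpha$-type smoothing for $t>0$ by the strong Feller property), to $P^D_t$ for each $t>0$. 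Third, I would establish \emph{strict positivity / irreducibility}: for $f\in E^+$, $f\not\equiv0$, and any $x\in D$, $P^D_tf(x)=\E_x[f(X_t),t<\tau_D]>0$ because the canonical process, started at $x$, reaches any open subset of $D$ before exiting with positive probability — this is the irreducibility of $(X_t)$ coming from the uniform ellipticity of $L$ on compacts (the diffusion component alone already gives full support). Hence $R^D_\alpha$ is a strongly positive compact operator.

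With these ingredients the Krein–Rutman theorem applies to $R^D_\alpha$: its spectral radius $\mu_\alpha>0$ is a simple eigenvalue with a strictly positive eigenfunction $\varphi_D$, and it is the only eigenvalue with a positive eigenfunction. Setting $\lambda_D:=\alpha-\mu_\alpha^{-1}$ and using the resolvent/semigroup functional calculus (the spectral mapping $R^D_\alpha\varphi_D=\mu_\alpha\varphi_D \Rightarrow P^D_t\varphi_D=e^{-\lambda_D t}\varphi_D$, which I would justify via uniqueness in the eigenvalue problem and continuity in $t$), one gets \eqref{eigen}. Positivity $\lambda_D>0$ follows because $\mu_\alpha<1/\alpha$: indeed $\E_x\int_0^{\tau_D}e^{-\alpha r}\,dr<1/\alpha$ strictly since $\tau_D<\infty$ a.s., giving $\|R^D_\alpha\|<1/\alpha$ after a uniform estimate, hence $\mu_\alpha\le\|R^D_\alpha\|<1/\alpha$. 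Uniqueness of the pair $(\varphi_D,\lambda_D)$ (up to normalization of $\varphi_D$) is precisely the uniqueness clause of Krein–Rutman, transported back through the bijection between eigenpairs of $R^D_\alpha$ and of the semigroup. The main obstacle I anticipate is the functional-analytic setup in the non-local, low-regularity setting: choosing a Banach space on which $R^D_\alpha$ is simultaneously compact, strongly positive, and on which the cone has nonempty interior (or is total) — the exterior values of $u$ relevant to $A$ and the lack of continuity of coefficients make the naive choice $C_0(\bar D)$ delicate, and one likely must either pass to an $L^2$-cone (using Krein–Rutman in the de Pagter / total-cone form) or prove an extra boundary-decay estimate to stay in $C_0(D)$.
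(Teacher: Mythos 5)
Your overall skeleton — compact, positive, irreducible resolvent plus a Krein--Rutman-type theorem, then transfer of the eigenpair from the resolvent to the semigroup via the resolvent identity — is exactly the paper's strategy, and you correctly isolate the real obstacle: on which space the cone argument can be run. The paper resolves it by taking your fallback option seriously rather than your primary one. It works on $L^d(D)$: the Krylov estimate \eqref{R-bdd} (from \cite{LM,mikulevicius}) gives $R^D_\al:L^d(D)\to L^\infty(D)$, hence a kernel $r^D_\al(x,\cdot)\in L^{d/(d-1)}(D)$ as in \eqref{020205-20}--\eqref{020205-20a}; compactness of $R^D_\al$ on $L^d(D)$ then follows from weak compactness of bounded sequences in $L^d$ plus the uniform $L^\infty$ bound on the images — no equicontinuity is needed. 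Strict positivity of the kernel \eqref{eq.uewo.st} (support theorem) gives irreducibility \eqref{070909-20}, and instead of classical Krein--Rutman the paper invokes Jentzsch's theorem (\cite[Theorem 6.6]{schaefer}) for irreducible compact positive kernel operators on $L^p$, which requires neither a cone with nonempty interior nor boundary regularity. The passage to \eqref{eigen} and $\la_D=\rho^{-1}(R^D_1)-1>0$ is then as you describe, with positivity of $\la_D$ read off from \eqref{012904-20}.

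Your primary route, by contrast, has two genuine gaps in this setting. First, compactness of $R^D_\al$ on a space of continuous functions via "interior elliptic estimates and equicontinuity" is not available here: the coefficients are only bounded measurable and the operator is non-local, so what one actually has is the strong Feller property ($R^D_\al$ maps $B_b(D)$ into $C_b(D)$, \eqref{eq.uewo.sf}), which does not by itself yield equicontinuity of the image of the unit ball. Second, the "vanishing near $\partial D$" needed to land in $C_0(D)$ requires Dirichlet regularity of the boundary (cf.\ Section \ref{sec7.5} of the paper, where an exterior ball condition is assumed for exactly this purpose), whereas Theorem \ref{thm020809-20} assumes only that $D$ is a bounded domain. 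So the $C_0(D)$/classical-Krein--Rutman branch of your plan would not close; the $L^p$/kernel-operator branch is the one that does, and it is worth knowing that Jentzsch's theorem is the precise tool that makes it work.
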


The proof of the theorem is presented in Section \ref{sec5.1}

\begin{definition}
The function $\varphi_D$ and $\lambda_D$  are called respectively:  the {\em principal
eigenfunction} and its associate {\em principal eigenvalue} for the operator $A$, with the zero Dirichlet exterior condition {on $D^c$}, see
e.g. \cite{BNV}.
\end{definition}
\bigskip

\section{Generalizations of the quantitative Hopf lemmas and weak Harnack estimates}

\label{sec5z}

Throughout this section we shall always assume that   domain $D$ is bounded. 

\subsection{Some generalizations of Theorems \ref{thm:main1a} and \ref{thm:main1}}

Below, we   formulate some generalizations of
estimates \eqref{010104-20aa} and \eqref{010104-20}. In general, we shall no
longer assume that the boundary $\partial D$ is regular enough so that
the outward normal, appearing in the left hand sides of the
aforementioned bounds,  can be defined. It shall be  replaced by the ratio
$[u(\hat x)-u(x)]/\psi(x)$, $x\in D$. Here $\hat x\in\partial D$ and $\psi:D\to(0,+\infty)$ are the
maximum point of the subsolution $u$ and some suitably defined function (e.g. the
principal eigenfunction associated with the operator), respectively.  In case $ \psi(x)\approx \delta_D(x)( ={\rm
  dist}(x,D^c))$, when $x\to\hat x$ (which may happen for sufficiently
regular domains) these type of estimates  lead to \eqref{010104-20aa} and \eqref{010104-20}.

\begin{theorem}[{Quantitative Hopf lemma {II.A}}]
\label{pee1}
Suppose that the assumption  A3)
holds.
Then, there exist a continuous function $\psi:D\to(0,+\infty)$ and a Borel function 
$\chi:D\to(0,+\infty)$, such that   for  any 
subsolution $u(\cdot)$ of \eqref{eq3.1a} satisfying  {
$$u(\hat
x)=\max_{x\in  \mathcal S(D)\cup\overline D}u(x)\ge 0\quad\text{for
  some}  \quad\hat x\in\partial D
$$
we have
\begin{equation}
\label{012804-20-1}
u(\hat x)-u(x)\ge \frac{\underline
  c\varphi_D(x) u(\hat
x)}{2e
  \|\varphi_D\|_\infty (\la_D+\underline c)}+\psi(x)\int_D(Au-cu)\chi\,dx,\quad x\in D.
\end{equation}}
\end{theorem}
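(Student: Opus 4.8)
The plan is to use the probabilistic (Feynman--Kac) representation of the subsolution together with the principal eigenfunction $\varphi_D$ and the gauge function $w_{c,D}$. First I would invoke part 2) of Proposition~\ref{prop010204-20}: since $u$ is a subsolution, for every $x\in D$ and $t\ge 0$,
\[
u(x)=\E_x\left[e_c(\tau_D\wedge t)u(X_{\tau_D\wedge t})\right]+\E_x\left[\int_0^{\tau_D\wedge t}e_c(s)(-Au+cu)(X_s)\,ds\right],
\]
and the same identity holds with $\tau_D$ in place of $\tau_D\wedge t$. Because $u$ attains its maximum $u(\hat x)=M\ge 0$ over $\mathcal S(D)\cup\overline D$ at a boundary point, we have $u(X_{\tau_D})\le M$ on $\{\tau_D<\infty\}$ (the exit position lies in $D^c$, hence in $\mathcal S(D)\setminus D$ or $\partial D$, using that $D$ is bounded so $\tau_D<\infty$ a.s. by Proposition~\ref{lm010104-20}), and $f:=(-Au+cu)\le 0$ a.e. on $D$. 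Rearranging the representation then yields, for $x\in D$,
\[
M-u(x)\ \ge\ M\,\E_x\!\left[1-e_c(\tau_D)\right]\;+\;\E_x\!\left[\int_0^{\tau_D}e_c(s)\,(cu-Au)(X_s)\,ds\right]
\ =\ M\,w_{c,D}(x)\;+\;\E_x\!\left[\int_0^{\tau_D}e_c(s)\,g(X_s)\,ds\right],
\]
where $g:=cu-Au\ge 0$ a.e.\ on $D$ and $w_{c,D}$ is the gauge-deficiency from \eqref{wcD}.

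Next I would produce the two lower bounds appearing on the right of \eqref{012804-20-1} from these two nonnegative terms. For the first term, I would bound $w_{c,D}(x)$ below by a multiple of $\varphi_D(x)$. The idea is: since $c\ge\underline c$, monotonicity of the gauge in $c$ gives $w_{c,D}(x)\ge w_{\underline c,D}(x)=1-\E_x e^{-\underline c\,\tau_D}$; then using the eigenvalue relation $P^D_t\varphi_D=e^{-\lambda_D t}\varphi_D$ one computes $\E_x\!\big[e^{-\underline c\,\tau_D}\varphi_D(X_{\tau_D})\big]$ — but $\varphi_D$ vanishes on $D^c$ (it is the principal Dirichlet eigenfunction), so one works instead with $\varphi_D(x)=\E_x\!\big[e^{-\lambda_D(\tau_D\wedge t)}\varphi_D(X_{\tau_D\wedge t})\big]\le \|\varphi_D\|_\infty\,\E_x e^{-\lambda_D\tau_D\wedge\cdots}$, i.e.\ $\E_x e^{-\lambda_D\tau_D}\ge \varphi_D(x)/\|\varphi_D\|_\infty$ after letting $t\to\infty$. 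Combining with the elementary inequality $1-e^{-\underline c\,\tau_D}\ge \tfrac{\underline c}{\lambda_D+\underline c}\big(1-e^{-(\lambda_D+\underline c)\tau_D}\big)$ (valid pointwise in $\tau_D\ge 0$, since $t\mapsto (1-e^{-at})/a$ is decreasing on $(0,\infty)$) and then $1-e^{-(\lambda_D+\underline c)\tau_D}\ge e^{-1}\big(1-e^{-\lambda_D\tau_D}\big)$-type bounds, one arrives at
\[
w_{c,D}(x)\ \ge\ \frac{\underline c}{\lambda_D+\underline c}\cdot\frac{\varphi_D(x)}{2e\,\|\varphi_D\|_\infty},
\]
which gives exactly the first summand in \eqref{012804-20-1} after multiplying by $M=u(\hat x)$. (The constants $2e$ come from juggling these exponential inequalities; I would not chase the sharp constant but rather match the one stated.)

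For the second term, I would define $\psi$ and $\chi$ directly from the Green operator $R^D$ of the killed process. Concretely, dropping $e_c(s)\ge e^{-\bar c\,\tau_D}$ is wasteful; instead keep $e_c(s)\ge e^{-\bar c s}$ and write $\E_x\!\big[\int_0^{\tau_D}e_c(s)g(X_s)\,ds\big]\ge R^D_{\bar c}\,g(x)=\int_D G^D_{\bar c}(x,y)g(y)\,dy$, where $G^D_{\bar c}$ is the $\bar c$-Green density (which exists and is strictly positive on $D\times D$ under our hypotheses — this is where irreducibility/strong Feller of $(X_t)$ and boundedness of $D$ enter). Then I would set $\psi(x):=\inf_{y\in D}\big(G^D_{\bar c}(x,y)/\chi_0(y)\big)$ for a suitable strictly positive weight, or more simply fix any reference point and localization to make $\psi$ continuous and strictly positive and $\chi$ Borel and strictly positive so that $G^D_{\bar c}(x,y)\ge \psi(x)\chi(y)$ on $D\times D$; this is a standard "factorization" of the Green kernel. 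Then $R^D_{\bar c}g(x)\ge \psi(x)\int_D g\,\chi\,dx=\psi(x)\int_D(cu-Au)\chi\,dx=-\psi(x)\int_D(Au-cu)\chi\,dx$ — wait, sign: $g=cu-Au=-(Au-cu)\ge0$, so $\int_D g\chi\,dx=-\int_D(Au-cu)\chi\,dx\ge0$, and the stated right-hand side has $+\psi(x)\int_D(Au-cu)\chi\,dx$ which is $\le 0$; so the inequality $M-u(x)\ge \psi(x)\int_D g\chi\,dx\ge \psi(x)\int_D(Au-cu)\chi\,dx$ holds \emph{a fortiori}. Hence the second summand follows, and adding the two lower bounds (which both come from the same nonnegative decomposition) yields \eqref{012804-20-1}.

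\textbf{Main obstacle.} The delicate point is the existence, strict positivity, and (lower) factorizability $G^D_{\bar c}(x,y)\ge\psi(x)\chi(y)$ of the killed Green kernel with $\psi$ continuous — this uses the strong Feller property and irreducibility of the canonical process (no ellipticity is needed, only A1)--A3)), and must be done carefully since the coefficients are merely bounded Borel. The second, more technical-than-deep, obstacle is organizing the chain of scalar exponential inequalities so that the constant in front of $\varphi_D(x)/\|\varphi_D\|_\infty$ comes out to be precisely $\underline c/\big(2e(\lambda_D+\underline c)\big)$; I expect this to be routine once the relation $\E_x e^{-\lambda_D\tau_D}\ge\varphi_D(x)/\|\varphi_D\|_\infty$ is established from Theorem~\ref{thm020809-20}.
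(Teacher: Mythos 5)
Your overall architecture is the paper's: split $u(\hat x)-u(x)$ via the Feynman--Kac representation into a gauge term and a Green-potential term, bound the first below by $\varphi_D$ through the eigenvalue relation, and bound the second by factorizing the killed Green operator from below as $\psi(x)\otimes\chi(y)\,dy$. However, there are two genuine problems. The first is a sign error that undercuts the whole second half: for a subsolution one has $-Au+cu\le 0$, i.e.\ $Au-cu\ge 0$ a.e.\ on $D$, so your $g:=cu-Au$ is $\le 0$, not $\ge 0$, and the summand $\psi(x)\int_D(Au-cu)\chi\,dx$ in \eqref{012804-20-1} is \emph{nonnegative}. Your concluding remark that this term is $\le 0$ and that the inequality therefore holds ``a fortiori'' is exactly backwards: this term is the nontrivial content (the analogue of $\int_D(-\Delta u)\delta_D\,dy$ in the Brezis--Cabr\'e lemma) and must actually be established by bounding $\E_x\big[\int_0^{\tau_D}e_c(s)(Au-cu)(X_s)\,ds\big]\ge R^D_{\bar c}\big((A-c)u\big)(x)$ from below by $\psi(x)\int_D(Au-cu)\chi\,dx$.

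The second problem is that the factorization you flag as the ``main obstacle'' is asserted, not proved, and the form you propose --- a pointwise bound $G^D_{\bar c}(x,y)\ge\psi(x)\chi(y)$ on all of $D\times D$ with $\psi$ continuous strictly positive and $\chi$ strictly positive --- is not available off the shelf here: the paper only knows $r^D_\alpha(x,\cdot)>0$ for $m$-a.e.\ $y$ for each fixed $x$, with no joint regularity. The paper's Lemma \ref{lm2.im9402} gets around this by Nummelin's minorization theorem for the $m$-irreducible kernel $\alpha R^D_\alpha$, which yields only a \emph{nontrivial} small function $\bar\psi$ and small measure $\nu$ (neither strictly positive everywhere), and then composes with the resolvent on both sides, $R^D_{\bar c}f\ge R^D_{\bar c+1}\bar\psi\cdot\int_D R^D_{\bar c+1}f\,\bar\phi\,dx$, using the strong Feller property \eqref{eq.uewo.sf} and irreducibility \eqref{eq.uewo.st} to upgrade $R^D_{\bar c+1}\bar\psi$ to a continuous strictly positive $\psi$ and the measure to one equivalent to Lebesgue. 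This resolvent-smoothing step is the missing idea. A smaller repair: your intermediate claim $\E_x e^{-\lambda_D\tau_D}\ge\varphi_D(x)/\|\varphi_D\|_\infty$ is false in the direction stated (it fails where $\varphi_D$ is near its maximum); the usable consequence of \eqref{eigen} is $P_x(\tau_D>t)\ge e^{-\lambda_D t}\varphi_D(x)/\|\varphi_D\|_\infty$, from which the stated constant follows by optimizing over $t$.
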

The result is
proved  in Section \ref{sec7.2.2}.

\medskip

Instead of the second term in the right hand side of
\eqref{012804-20-1} we can use a
term depending on the principal eigenfunction, but then we have to
replace the mean of $Au-cu$, with respect to some measure, by its essential infimum.
\begin{theorem}[{Quantitative Hopf lemma  {II.B}}]
\label{pee}
Suppose that 
$u(\cdot)$ and $\hat
x$ are as in Theorem \ref{pee1}. Then, under the assumptions of  the
above theorem 
we have (cf \eqref{c-c})
\begin{equation}
\label{012804-20}
u(\hat x)-u(x)\ge \frac{\varphi_D(x)}{2e
  \|\varphi_D\|_\infty }\left[\frac{\underline
  cu(\hat
x)}{\la_D+\underline c}+\frac{{\rm essinf}_D(Au-cu)}{\la_D+\|c\|_\infty}\right],\quad x\in D.
\end{equation}
\end{theorem}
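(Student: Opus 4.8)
The plan is to run the Feynman–Kac representation \eqref{010204-20dfg} for the subsolution $u$ and extract two contributions from the right-hand side: one coming from the boundary value $u(X_{\tau_D})$ weighted by the discount $e_c(\tau_D)$, which will be compared with $u(\hat x)$ via the gauge function $w_{c,D}$, and one coming from the running cost $-Au+cu$, which is bounded below (in absolute value, after a sign flip) by ${\rm essinf}_D(Au-cu)$ times the Green potential $R^D\fch$. First I would let $M:=u(\hat x)=\max_{\mathcal S(D)\cup\overline D}u$ and write, using \eqref{010204-20dfg} with $\tau_D$ in place of $\tau_D\wedge t$ (justified by Proposition \ref{lm010104-20} and boundedness of $u$),
\begin{equation}
\label{pf-pee-1}
M-u(x)=\E_x\big[e_c(\tau_D)(M-u(X_{\tau_D}))\big]+M\,\E_x\big[1-e_c(\tau_D)\big]+\E_x\Big[\int_0^{\tau_D}e_c(s)\,(Au-cu)(X_s)\,ds\Big].
\end{equation}
Since $X_{\tau_D}\in\mathcal S(D)\cup\overline D$ (the process either hits $\partial D$ or jumps to a point of $\mathcal S(D)$), we have $M-u(X_{\tau_D})\ge0$, so the first term is non-negative and may be dropped. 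The second term is $M\,w_{c,D}(x)$ in the notation \eqref{wcD}, and the third term is bounded below by ${\rm essinf}_D(Au-cu)\cdot\E_x\big[\int_0^{\tau_D}e_c(s)\,ds\big]$ (note this infimum may be negative, which is why we keep the exact discount factor and do not discard this term). Hence
\begin{equation}
\label{pf-pee-2}
M-u(x)\ge M\,w_{c,D}(x)+{\rm essinf}_D(Au-cu)\cdot\E_x\Big[\int_0^{\tau_D}e_c(s)\,ds\Big],\quad x\in D.
\end{equation}

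The next step is to bound the two probabilistic quantities $w_{c,D}(x)$ and $\E_x\big[\int_0^{\tau_D}e_c(s)\,ds\big]$ from below by $\varphi_D(x)$ up to explicit constants. Since $c\ge\underline c$, we have $e_c(t)\le e^{-\underline c\,t}$, so
$$
w_{c,D}(x)=\E_x[1-e_c(\tau_D)]\ge \E_x[1-e^{-\underline c\,\tau_D}]=\underline c\,\E_x\Big[\int_0^{\tau_D}e^{-\underline c\,s}\,ds\Big]=\underline c\,R^D_{\underline c}\fch(x),
$$
and similarly, using $c\le\|c\|_\infty$,
$$
\E_x\Big[\int_0^{\tau_D}e_c(s)\,ds\Big]\ge \E_x\Big[\int_0^{\tau_D}e^{-\|c\|_\infty s}\,ds\Big]=R^D_{\|c\|_\infty}\fch(x).
$$
So it remains to show the resolvent-type lower bound
\begin{equation}
\label{pf-pee-3}
R^D_\beta\fch(x)\ge \frac{\varphi_D(x)}{2e\,\|\varphi_D\|_\infty\,(\la_D+\beta)},\qquad x\in D,\ \beta\ge0.
\end{equation}
For this I would use the eigen-relation \eqref{eigen}: from $P^D_t\varphi_D=e^{-\la_D t}\varphi_D$ we get, for any $t>0$, $P^D_t\fch(x)\ge \|\varphi_D\|_\infty^{-1}P^D_t\varphi_D(x)=\|\varphi_D\|_\infty^{-1}e^{-\la_D t}\varphi_D(x)$, hence
$$
R^D_\beta\fch(x)=\int_0^\infty e^{-\beta t}P^D_t\fch(x)\,dt\ge \frac{\varphi_D(x)}{\|\varphi_D\|_\infty}\int_0^\infty e^{-(\la_D+\beta)t}\,dt=\frac{\varphi_D(x)}{\|\varphi_D\|_\infty(\la_D+\beta)},
$$
which is in fact slightly stronger than \eqref{pf-pee-3}; the factor $2e$ in the statement presumably absorbs a less sharp estimate (e.g. using only $P^D_t\fch\ge e^{-1}\la_D\int_0^{1/\la_D}\dots$ or truncating the time integral), and I would simply carry whichever clean bound the authors use — the point is that a constant multiple of $\varphi_D(x)/(\|\varphi_D\|_\infty(\la_D+\beta))$ is a lower bound. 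Plugging \eqref{pf-pee-3} with $\beta=\underline c$ into the first term and with $\beta=\|c\|_\infty$ into the second term of \eqref{pf-pee-2}, and recalling $M=u(\hat x)$, yields exactly \eqref{012804-20}.

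The main obstacle I anticipate is twofold: first, justifying that one may use the Feynman–Kac identity \eqref{010204-20dfg} with the genuine exit time $\tau_D$ (not the truncation $\tau_D\wedge t$) and pass the limit $t\to\infty$ inside all three expectations — this needs $\sup_{x\in D}\E_x\tau_D<\infty$ from Proposition \ref{lm010104-20}, the bound $e_c\le1$, and boundedness of $u$ and of $Au-cu$ on $D$ (the latter in $L^p_{\rm loc}$, with the potential $R^D$ applied to it finite by the moment bound — one should be a little careful here, perhaps first localizing to $V\Subset D$ via part 1) of Proposition \ref{prop010204-20} and then exhausting $D$). Second, establishing that $X_{\tau_D}$ indeed lands in $\mathcal S(D)\cup\overline D$ almost surely, so that $M-u(X_{\tau_D})\ge0$: the process exits $D$ either continuously through $\partial D$ or by a jump, and a jump from a point $x$ lands in $\mathcal S_x\subset\mathcal S(D)$ — this is intuitively clear from the structure of $A$ but requires invoking properties of the canonical process established in Section \ref{sec4} (support of the jump measure, right-continuity). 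Everything else is a routine comparison of exponentials and an application of \eqref{eigen}.
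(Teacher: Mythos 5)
Your proof is correct and follows essentially the same route as the paper's: the Feynman--Kac representation from Proposition \ref{prop010204-20}, a split into the gauge term and the source term, and a lower bound on $P_x(\tau_D>t)$ via the eigenrelation \eqref{eigen}; the only difference is that the paper truncates at $\tau_D\wedge t$, optimizes over $t$ for each term separately and then averages the two resulting bounds (which is where the factor $2e$ comes from), whereas your resolvent computation $R^D_\beta\fch\ge\varphi_D/(\|\varphi_D\|_\infty(\la_D+\beta))$ treats both terms in a single identity and actually yields the sharper constant $1$ in place of $1/(2e)$. One small remark: your parenthetical concern that ${\rm essinf}_D(Au-cu)$ might be negative is misplaced and, were it negative, your next step (multiplying the essential infimum by the smaller quantity $R^D_{\|c\|_\infty}\fch(x)$ in place of $\E_x[\int_0^{\tau_D}e_c(s)\,ds]$) would reverse the inequality --- the step is valid precisely because $u$ is a subsolution, so $Au-cu\ge0$ a.e.\ on $D$ and the essential infimum is non-negative.
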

The result is
proved  in Section \ref{sec7.2.1}.

\begin{remark}
We stress the fact that in both Theorems \ref{pee1} and
\ref{pee} no assumption    
on $\partial D$ is made, except that it
is a boundary of a bounded domain. 
\end{remark}

\subsection{Remarks on Theorem \ref{pee1}}

\label{sec5.2z}
It is interesting to compare Theorem \ref{pee1} with Lemma 3.2 of
\cite{BC}, where a similar estimate is obtained for solutions
  of
the homogeneous Dirichlet boundary problem for the Poisson  equation, i.e.
 \begin{equation}
\label{poisson}
-\Delta u=f\ge 0\quad\text{ and }\quad u(x)=0,\,x\in\partial D.
\end{equation}
 According to \cite{BC}  if $f\in L^\infty$,
$\partial D$ is smooth, then there exists $a>0$, depending only on $D$, such that for any $u$
satisfying \eqref{poisson} we have
\begin{equation}
\label{011210-20}
 u(x)\ge \psi(x)\int_D f\chi\,dx,\quad x\in D,
\end{equation}
where
\begin{equation}
\label{psi-chi}
\chi(x)=\delta_D(x) \quad \text{ and }\quad\psi(x)=a\delta_D(x).
\end{equation}
{Note  that in Theorem \ref{pee1}
no assumptions on smoothness of $\partial D$ is made}. In addition, the operator $A$ is no
longer just the laplacian. Instead, it suffices that 
hypotheses A1) and A2) hold.  Obviously, in such a
generality  one cannot hope to
show   an analogue of \eqref{012804-20-1}, with the
  functions $\psi$, $\chi$ given by \eqref{psi-chi}, see Remark \ref{rmk5.5} below. The estimate  however holds, provided
both the domain and the
coefficients of $A$ are sufficiently regular. We have the following.
\begin{theorem}
\label{cor.d21.20.1}
Assume that $D$   satisfies  both the uniform interior ball and  exterior ball
conditions.  Then, there exists $a>0$ such that   \eqref{012804-20-1}
holds with $\psi$ replaced by $a\delta_D$ {and $\chi$ unchanged}.
\end{theorem}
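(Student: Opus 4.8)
The plan is to upgrade the abstract estimate \eqref{012804-20-1} of Theorem \ref{pee1} by showing that, under the two-sided ball conditions, the function $\psi$ produced there can be taken comparable to $\delta_D$. The strategy is to go back to how $\psi$ is constructed in the proof of Theorem \ref{pee1} and recognize that, up to constants, it is built from the Green-type quantities of the killed process — essentially $\psi(x)\approx R^D\chi(x)$ for a suitable strictly positive Borel weight $\chi$, or more precisely from the bound $u(\hat x)-u(x)\ge w_{c,D}(x)u(\hat x)+$ (a Green-potential term). So it suffices to establish the two-sided estimate
\begin{equation}
\label{psi-delta-D}
\psi(x)\approx \delta_D(x),\quad x\in D,
\end{equation}
and then absorb the constant from $\delta_D\le C\psi$ into the factor $a$. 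The content of the theorem is therefore a regularity statement about the killed semigroup $P^D_t$ (equivalently its resolvent $R^D$) on a domain with uniform interior and exterior ball conditions.

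First I would establish the lower bound $\psi(x)\succeq\delta_D(x)$. This uses the uniform interior ball condition: for $x\in D$ pick an interior ball $B(y,r)\subset D$ with $r\ge r_D$ touching $\partial D$ near $x$ and with $x$ on the segment from $y$ to the boundary, so that $\delta_D(x)\approx$ (distance of $x$ to $\partial B(y,r)$). On such a ball the Feynman–Kac representation, Proposition \ref{prop010204-20}, together with the uniform lower bound on $\E_x\tau_{B(y,r)}$ (which in turn follows by comparing with the differential part via uniform ellipticity on $\bar D$, as in Proposition \ref{lm010104-20}) gives a lower bound on the relevant potential proportional to the distance of $x$ from $\partial B(y,r)$, hence $\succeq\delta_D(x)$. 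Concretely, one shows $\E_x[\tau_{B(y,r)}]\succeq \delta_D(x)$ by a standard barrier: the function $x\mapsto \text{dist}(x,\partial B(y,r))$ (or $r^2-|x-y|^2$) is a supersolution for $-L$ on $B(y,r)$ up to multiplicative constants depending only on $\lambda_{\bar D}$ and $M_A$, and Dynkin's formula applied to it yields the bound; the nonlocal part $S$ only helps here since $u(x+\cdot)-u(x)$ contributes a sign-definite correction on a concave barrier, or can be dominated using \eqref{Nx1}.

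Next I would establish the upper bound $\psi(x)\preceq\delta_D(x)$, which is where the uniform exterior ball condition enters and which I expect to be the main obstacle, because of the nonlocal term: a killed Lévy-type process can jump over a thin exterior ball rather than exiting through it, so the classical barrier argument for $\sup_{x}\E_x\tau_D\cdot\mathbf 1$ near the boundary must be handled with care. The plan is: for $\hat x\in\partial D$ take an exterior ball $B(z,\rho)\subset D^c$, $\rho\ge$ const, tangent at $\hat x$, and construct a bounded supersolution $h$ for $-A+c$ on a neighborhood $U\cap D$ of $\hat x$ with $h(x)\approx\delta_D(x)$ near $\hat x$ and $h$ large enough away from $\hat x$ and on $D^c$ to dominate $\psi$ there; candidates are smooth functions of $|x-z|-\rho$ (such as $(|x-z|-\rho)$ cut off, or $1-(\rho/|x-z|)^{\alpha}$) whose $C^2$ barrier property for $L$ is classical, and for which the jump integral $S h(x)=\int (h(x+y)-h(x)-\cdots)N(x,dy)$ is controlled: split into $|y|\le\delta_D(x)$, where a second-order Taylor bound plus \eqref{Nx1} gives $O(\|\nabla^2 h\|_\infty \delta_D(x))\cdot N_*$, negligible against the strictly negative $L h\preceq -1$ coming from the exterior ball; and $|y|>\delta_D(x)$, where one uses $h\le \|h\|_\infty$ and boundedness of $N_*$ to get an $O(1)$ term that is absorbed by enlarging $h$ by a fixed constant on the relevant scale, or by a scaling/iteration over dyadic distance shells. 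Then $R^D\chi\le h$ by the maximum principle (Proposition \ref{prop012804-20}, or directly the Feynman–Kac comparison), giving $\psi(x)\preceq h(x)\preceq\delta_D(x)$ near the boundary, and away from the boundary $\delta_D\approx 1\approx\psi$ by continuity and strict positivity of $\psi$ on the compact interior. Combining the two bounds yields \eqref{psi-delta-D}, and substituting into \eqref{012804-20-1} with the constant from $\delta_D\le C\psi$ renamed $a$ completes the proof. \qed
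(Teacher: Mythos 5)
Your proposal contains a structural misreading of what needs to be proved, and the half you identify as ``the main obstacle'' is not needed at all. Since $u$ is a subsolution, $Au-cu\ge 0$ a.e.\ and $\chi>0$, so the integral $\int_D(Au-cu)\chi\,dx$ in \eqref{012804-20-1} is non-negative; to replace $\psi$ by $a\delta_D$ one therefore only needs the \emph{lower} bound $\psi\ge a\delta_D$. The upper bound $\psi\preceq\delta_D$, to which you devote most of the argument, is irrelevant. Correspondingly, you misplace the role of the exterior ball condition: in the paper it is not used for any upper barrier, but to show that $D$ is Dirichlet regular (Proposition \ref{prop021205-20}), hence that $\psi=R^D_{\bar c+1}\bar\psi_0$ belongs to $C_0(\overline D)$; this boundary continuity is precisely the hypothesis needed to run the Hopf-lemma argument of Proposition \ref{prop010310-20}, which is what delivers the lower bound $R^D_{\bar c+1}\bar\psi_0\ge a\delta_D$.

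There is also a genuine gap in your lower-bound argument itself. You treat $\psi$ as $R^D\chi$ for a ``suitable strictly positive Borel weight'' and then bound it below on an interior ball $B(y,r)$ by $(\inf_B\chi)\,\E_x\tau_{B(y,r)}\succeq\delta_D(x)$. But the function $\psi$ produced in the proof of Theorem \ref{pee1} is $R^D_{\bar c+1}\bar\psi_0$, where $\bar\psi_0$ is only a non-trivial, non-negative \emph{small function} coming from Nummelin's theorem (Lemma \ref{lm2.im9402}); it may vanish identically on every interior ball touching $\partial D$, in which case your local estimate gives nothing. One needs a global propagation mechanism: the paper applies the (weak-subsolution version of the) Hopf lemma to $-R^D_{\bar c+1}\bar\psi_0$, obtaining $R^D_{\bar c+1}\bar\psi_0(\hat x-t\mathbf n)\ge at\inf_{D_{r/2}}R^D_{\bar c+1}\bar\psi_0$, and then uses irreducibility \eqref{eq.uewo.st} together with continuity to get $\inf_{D_{r/2}}R^D_{\bar c+1}\bar\psi_0>0$. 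Your barrier computation for $\E_x\tau_{B(y,r)}$ is fine as far as it goes, but it cannot substitute for this step, and without the boundary continuity supplied by the exterior ball condition the Hopf-type comparison (which needs $-\psi$ to attain its maximum $0$ at the boundary) cannot even be set up.
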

The proof of the {theorem} can be found in Section \ref{sec7.2.2a}.

\medskip

\begin{remark}
\label{rmk5.5}
An example of a domain that satisfies both the interior and exterior ball conditions, but not in the uniform sense, see \cite[Theorem 4]{BeSw}, shows that 
even in the case when $A$ is the laplacian one cannot use
$\psi=a\delta_D$ in \eqref{012804-20-1} (thus also in
\eqref{011210-20}, with $\psi$ as above) for some $a>0$. On the other
hand, the {classical} Hopf lemma, {as formulated in \cite[p. 82]{BeSw},}   is then satisfied.
\end{remark}

\medskip

We can further strengthen Theorem  \ref{cor.d21.20.1}, allowing $\psi$ and $\chi$ as in \eqref{psi-chi}, if some additional
  assumptions on $A$ are made. Consider  the operator $\hat A$ that is the formal adjoint
  (w.r.t. the Lebesgue measure) to $A$, i.e. 
\begin{equation}
\label{eq.deo.1}
\int_{\BR^d} vAu\,dx=\int_{\BR^d} u \hat A v\,dx,\quad u,v\in C^2_c(\BR^d).
\end{equation}
Assume that it is of the form
  $\hat A_0-\hat c$, where 
\begin{align}
\label{eqi.6.30.09}
\nonumber &
\hat A_0u(x)=\frac12\sum_{i,j=1}^d\hat q_{i,j}(x)\partial_{x_i,x_j}^2u(x)+\sum_{i=1}^d\hat b_{i}(x)\partial_{x_i}u(x)
\\
& 
  +\int_{\BR^d}\left(u(x+y)-u(x)-\sum_{i=1}^d\frac{y_i\partial_{x_i}u(x)}{1+|y|^2}\right)\hat
  N(x,dy),\quad u\in C^2(D)\cap C_b(\bbR^d),
\end{align}
 with the coefficients $\hat q_{i,j}$,  $\hat b_{i}$, $\hat N$
satisfying A1), A2) and  $\hat c\in B_b(\bbR^d)$.
Define $\widehat {\cal S}(D)$ using \eqref{cSD} for $\hat N$.

 Let
$(\hat P_x)$ be a strong Markovian solution to the martingale problem
corresponding to $\hat A_0$. Then, by the strong Markov property
\begin{equation}
\label{hPDt}
\hat P_t^{\hat c,D}f(x):=\hat{\bbE}_x\left[f(X_t)e_{\hat c}(t)\mathbf1_{\{t<
  \tau_D\}}\right],\quad x\in D,\,t\ge0,\,f\in B_b(D)
\end{equation}
 defines a semigroup of operators
$(\hat P_t^{\hat c,D})_{t\ge0}$ on $B_b(D)$. Let
 $\hat R^{\hat c,D}_\al$ be the respective resolvent operator,
 defined by
\eqref{RDt}, where $P_t^{D}$ is replaced by $\hat P_t^{\hat
  c,D}$. It can be defined for all $\al\ge \|\hat c^+\|_\infty$. Assume that
\begin{equation}
\label{res-dual}
\int_D \hat R^{\hat c,D}_\alpha f g\,dx=\int_Df R^D_\alpha g\,dx,\quad f,g\in B^+(D),\, \al\ge \|\hat c^+\|_\infty.
\end{equation}
The following result
can be inferred from {the quantitative Hopf lemma expressed in} Theorem \ref{pee1}. Its proof is given in Section \ref{sec7.2.2a}.
\begin{theorem}
\label{cor.d21.20.1a}
{Assume that $D$   satisfies  both the uniform interior and  exterior ball
conditions} and $\hat A$ is as described above. 
Then, the conclusion of Theorem \ref{pee1} holds  with  $\psi$ and $\chi$
as in \eqref{psi-chi}.
\end{theorem}

In order to use Theorem \ref{cor.d21.20.1a} we need to verify that
the resolvent $\hat R^{\hat c,D}_\alpha$   satisfies
\eqref{res-dual}. The equality holds, provided that both $\partial D$ and
coefficients of $\hat A$  are sufficiently
regular.  Before we formulate some sufficient condition let us first
recall the notion of a function of vanishing mean oscillation.

\begin{definition}[Functions of vanishing mean oscillation]
\label{df5.6}
Given a function $f\in L^1_{\rm loc}(\bbR^d)$ and $r>0$ let us define
$$
\eta(r):=\sup_{B\in {\frak B}_r}\frac{1}{m_d(B)}\int_{B}\left|f(x)-f_B\right|dx.
$$
Here ${\frak B}_r$ denotes the family of all balls of radius less
than, or equal to $r$ and \linebreak $f_B:=(1/m_d(B))\int_{B}f(x) dx$.
We say that a function is of vanishing mean oscillation (VMO) if
$\sup_{r>0}\eta(r)<+\infty$, i.e. it is of bounded mean oscillation
(BMO), and $\lim_{r\to0}\eta(r)=0$. Denote by $VMO$ the class of all
 such functions.
\end{definition}

The following result is shown in Section \ref{sec-res-dual}.
\begin{theorem}
\label{thm013009-20}
Suppose that $\partial D$ is of $C^2$ class, the coefficients of both $A$ and $\hat A_0$ satisfy
hypotheses A1) and A2) and $\hat c\in B_b(\bbR^d)$.   
Furthermore, assume that    $q_{i,j}$,  $\hat q_{i,j}$ belong to
$VMO$ for all $i,j=1,\ldots,d$ and  $\mathcal S(D)\cup
\widehat{\mathcal S}(D)\subset  {\overline D}$. Then
\eqref{res-dual} holds {and, as a result, the conclusion of Theorem
  \ref{pee1} reamin valid with  $\psi$ and $\chi$
as in \eqref{psi-chi}.}
\end{theorem}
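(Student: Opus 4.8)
The plan is to realize each of the two resolvents as the solution operator of a Dirichlet problem for a non-divergence form integro-differential equation, and then to reduce \eqref{res-dual} to the formal adjoint identity \eqref{eq.deo.1}, extended from $C^2_c(\BR^d)$ to the relevant pair of solutions.

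\emph{Step 1 (identification of the resolvents).} Fix $g,f\in B_b(D)$ and $\alpha\ge\|\hat c^+\|_\infty$. I would first show that $u:=R^D_\alpha g$ belongs to $W^{2,p}(D)\cap C_0(D)$ for every $p>d$, that $u\equiv 0$ on $D^c$, and that $(\alpha-A)u=g$ a.e. in $D$, and likewise that $\hat u:=\hat R^{\hat c,D}_\alpha f$ belongs to $W^{2,p}(D)\cap C_0(D)$, $\hat u\equiv 0$ on $D^c$, and $(\alpha-\hat A)\hat u=f$ a.e. in $D$. Boundedness is immediate from $\sup_{x\in D}\bbE_x\tau_D<\infty$ (Proposition \ref{lm010104-20}); continuity up to $\partial D$ with null boundary value uses that every point of $\partial D$ is regular for the canonical processes associated with $A$ and with $\hat A_0$, which follows from the exterior ball property of a $C^2$ domain. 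Applying the Feynman--Kac identity of Proposition \ref{prop010204-20}(1) on an exhaustion $V_n\Subset D$, together with the strong Markov property, gives $(\alpha-A)u=g$ weakly on each $V_n$; interior $W^{2,p}$-estimates for non-divergence form operators with $VMO$ leading coefficients then yield $u\in W^{2,p}_{\rm loc}(D)$, and the global, up-to-the-boundary $W^{2,p}$-theory on $C^{1,1}$ domains---available because $\partial D\in C^2$ and $q_{i,j},\hat q_{i,j}\in VMO$---upgrades this to $u\in W^{2,p}(D)$. In these estimates the nonlocal parts $Su$, $\hat S\hat u$ are absorbed into the right-hand side as $L^p(D)$ terms by the bound $\|U[v]\|_{L^p}\preceq\|\nabla^2 v\|_{L^p}$ of \cite{bony1} and \eqref{Nx1}; the hypothesis $\mathcal S(D)\cup\widehat{\mathcal S}(D)\subset\overline D$ ensures that, for $x\in D$, $Su(x)$ and $\hat S\hat u(x)$ depend only on the values of $u$ and $\hat u$ on the compact set $\overline D$. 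Well-posedness of these Dirichlet problems, needed to match the analytic solution with the probabilistic resolvent via Proposition \ref{prop010204-20}(1), is obtained by splitting $S=S_1+S_2$ into small- and large-jump components: $S_1$ has small operator norm from $W^{2,p}(D)$ to $L^p(D)$ by \eqref{Nx1}, while $S_2$ is compact relative to $W^{2,p}(D)$ thanks to the compact embedding $W^{2,p}(D)\hookrightarrow W^{1,p}(D)$, so a Fredholm argument applies, with injectivity supplied by the weak maximum principle of Proposition \ref{prop012804-20}.

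\emph{Step 2 (Green-type identity and conclusion).} Next I would establish
\begin{equation}
\label{plan-green-id}
\int_D (Au)\,\hat u\,dx=\int_D u\,(\hat A\hat u)\,dx .
\end{equation}
Since $p>d$ and $\partial D\in C^2$, both $u$ and $\hat u$ lie in $W^{2,p}(D)\cap W^{1,p}_0(D)$ and are thus $W^{2,p}(D)$-limits of functions $u_n,\hat u_n\in C^2(\overline D)$ vanishing on $\partial D$; by the Sobolev embedding $u_n\to u$, $\hat u_n\to\hat u$ in $C^1(\overline D)$. Extending $u_n,\hat u_n$ by $0$ to $\BR^d$ (continuous, Lipschitz, supported in $\overline D$), one applies the formal adjoint identity: the boundary contributions of the differential part each carry a factor $u_n$ or $\hat u_n$ and so vanish on $\partial D$, while the contributions of the nonlocal parts coming from $D^c$ vanish because $u_n,\hat u_n$ are supported in $\overline D$ and all jumps issued from $D$ stay in $\overline D$; this gives $\int_D(Au_n)\hat u_n\,dx=\int_D u_n(\hat A\hat u_n)\,dx$. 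Letting $n\to\infty$, using that the differential parts converge in $L^p(D)$ and, by the bound of \cite{bony1} together with the confinement of jumps to $\overline D$, so do the nonlocal parts, yields \eqref{plan-green-id}. Finally, combining \eqref{plan-green-id} with $(\alpha-A)u=g$ and $(\alpha-\hat A)\hat u=f$,
\[
\int_D \big(\hat R^{\hat c,D}_\alpha f\big)\,g\,dx=\int_D\hat u\,(\alpha u-Au)\,dx=\int_D u\,(\alpha\hat u-\hat A\hat u)\,dx=\int_D f\,\big(R^D_\alpha g\big)\,dx,
\]
which is \eqref{res-dual} for $f,g\in B_b(D)$; the general case $f,g\in B^+(D)$ follows by monotone convergence, and the endpoint $\alpha=\|\hat c^+\|_\infty$ by monotonicity in $\alpha$, finiteness being guaranteed by $\sup_{x\in D}\bbE_x\tau_D<\infty$ (one may also reduce to large $\alpha$ via the resolvent identity for $R^D_\alpha$ and its adjoint).

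The main obstacle will be Step 1 together with the boundary analysis of Step 2. Identifying the probabilistically defined resolvents with the unique $W^{2,p}(D)$ solutions of the Dirichlet problems fuses several nontrivial ingredients---regularity of boundary points for the martingale-problem processes, interior and boundary $W^{2,p}$-estimates for non-divergence operators with $VMO$ coefficients, and the Fredholm/continuity argument absorbing the nonlocal perturbation---while justifying \eqref{plan-green-id} for functions that are only in $W^{2,p}(D)\cap C_0(D)$ (hence merely Lipschitz across $\partial D$ after the zero extension) requires a careful treatment of the surface contributions, which, however, drop out precisely because $u$ and $\hat u$ vanish on $\partial D$ and the jumps stay within $\overline D$. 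Once these points are settled, the duality \eqref{res-dual} is the short computation displayed above.
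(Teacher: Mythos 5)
Your overall strategy coincides with the paper's: both arguments identify $R^D_\alpha$ and $\hat R^{\hat c,D}_\alpha$ with the solution operators of the Dirichlet problems for $\alpha-A$ and $\alpha-\hat A$ in $W^{2,p}(D)\cap C_0(\overline D)$, extend the formal adjoint identity \eqref{eq.deo.1} from $C^2_c(\bbR^d)$ to that pair of domains by approximation, and finish with the same two-line duality computation (the paper writes it with an auxiliary $\eta$ solving $(-A+\alpha)\eta=w$, which is just your $u=R^D_\alpha g$ in disguise). The difference is one of delegation: the paper obtains the entire analytic package --- existence, uniqueness, global $W^{2,p}$ regularity up to the boundary, and the generation of a $C_0$-semigroup on $C_0(\overline D)$ --- by citing \cite[Theorem 1.2]{Taira3} for Waldenfels operators with $VMO$ leading coefficients on $C^2$ domains, and then matches the analytic resolvent with the probabilistic one via the martingale problem, exactly as you propose. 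Your Step 2 is the ``standard approximation'' the paper only alludes to, and your observation that the exterior contributions of the nonlocal parts vanish because the zero-extended solutions are supported in $\overline D$ and all jumps from $D$ land in $\overline D$ is the correct reason the adjoint identity survives the passage to $W^{2,p}(D)\cap C_0(\overline D)$.

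The one genuine soft spot is in your hand-rolled well-posedness argument in Step 1: the claim that the small-jump part $S_1$ has small operator norm from $W^{2,p}(D)$ to $L^p(D)$ ``by \eqref{Nx1}'' does not follow from the stated hypotheses. Condition \eqref{Nx1} only bounds $\sup_x\int\min\{1,|y|^2\}\,N(x,dy)$; it does not force $\sup_x\int_{|y|\le\varepsilon}|y|^2\,N(x,dy)\to0$ as $\varepsilon\to0$ (take, e.g., $N(x,\cdot)=\varepsilon(x)^{-2}\delta_{y(x)}$ with $|y(x)|=\varepsilon(x)\to0$), so truncation at small radius does not make the perturbation small, and the Neumann-series/method-of-continuity step of your Fredholm argument does not close under A1)--A2) alone. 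This is precisely the technical content the paper outsources to \cite{Taira3}; if you insist on a self-contained route you need either an additional uniform smallness hypothesis on the second moments of the small jumps or a different a priori estimate. With that step replaced by the citation, your proof and the paper's are the same.
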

\medskip


\subsection{Weak Harnack inequality}

We end this section with the result that can be interpreted as a weak
form of the Harnack inequality.
\begin{theorem}
\label{thm:main4a}{
Let $D$ be a bounded domain in $\BR^d$ and $c$ satisfies hypothesis A3).
Then, there exists a bounded Borel measurable function $\chi:D\to(0,+\infty)$, such that
for any $V {\Subset} D$ we can find a constant $C>0$   such that
for any  $u$ - a  non-negative supersolution to \eqref{eq3.1a} -  we have
\begin{equation}
\label{830104-20aabb}
\inf_{x\in V} u(x)\ge C \int_Vu\chi\,dx.
\end{equation}}
\end{theorem}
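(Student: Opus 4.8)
The plan is to derive \eqref{830104-20aabb} from Theorem \ref{pee1} by a change of sign and a localization argument. Let $u$ be a non-negative supersolution of \eqref{eq3.1a} on $D$; then $-u$ is a subsolution, but $-u$ need not attain a non-negative maximum on $\partial D$, so Theorem \ref{pee1} does not apply directly to $-u$ on $D$. Instead I would work on a slightly larger auxiliary domain, or, more simply, observe that the probabilistic representation \eqref{010204-20dfg} already gives everything: since $u$ is a non-negative supersolution, $(-Au+cu)\ge 0$ a.e. in $D$ and $u\ge 0$ on $\mathcal S(D)\cup D^c$, so by the Feynman-Kac formula (Proposition \ref{prop010204-20}, part 2), letting $t\to\infty$ and using $\sup_{x\in D}\E_x\tau_D<\infty$ (Proposition \ref{lm010104-20}) together with monotone/bounded convergence,
\begin{equation}
\label{fk-super}
u(x)\ge \E_x\left[\int_0^{\tau_D}e_c(s)(-Au+cu)(X_s)\,ds\right]\ge R^D\big[(-Au+cu)\fch_D\big](x),\quad x\in D,
\end{equation}
where in the last step I used $e_c(s)\le 1$ (as $c\ge 0$ under A3)) and discarded the non-negative boundary term. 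So it suffices to bound the right-hand side below by $C\int_V u\chi\,dx$ for $x\in V$.

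Next I would introduce the measure $\chi\,dx$. The natural candidate is $\chi(x):=R^D\psi_0(x)$ or, to get strict positivity and boundedness, a fixed reference function built from the resolvent; in fact the weak Harnack estimate should follow by exhibiting a single Borel function $\chi:D\to(0,+\infty)$, bounded, such that $R^D g(x)\ge (\text{const})\,x$-uniformly on $V$ controls $\int_V g\chi\,dx$ for all non-negative $g$. Concretely, fix $V\Subset D$; by irreducibility and the strong Feller property of $(X_t)$ (invoked as in the proofs behind Theorem \ref{thm020809-20}), the Green kernel $G^D(x,y)$ of $R^D$ satisfies $\inf_{x\in V,\,y\in V}G^D(x,y)=:c_V>0$ and $\sup_{x\in V}\int_D G^D(x,y)\,dy<\infty$; more care is needed since $G^D(x,\cdot)$ need not be bounded, so I would instead take a bounded strictly positive $\chi$ with $\chi\le 1$ and $\int_D\chi\,dx<\infty$ and write, for $x\in V$,
\begin{equation}
\label{green-lb}
R^D\big[(-Au+cu)\fch_D\big](x)=\int_D G^D(x,y)(-Au+cu)(y)\,dy\ge c_V\int_V (-Au+cu)(y)\,dy.
\end{equation}
Then I would apply the same Feynman-Kac lower bound \emph{backwards}: integrating $(-Au+cu)$ against $\chi$ and using \eqref{fk-super} on the subdomain $V$ in place of $D$ (so that $u$ restricted appropriately is still a supersolution on $V$), one gets $\int_V u\chi\,dx\le \int_V u\,dx$ controlled by $\sup_V u$ times $|V|$ — but this is the wrong direction. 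The correct route is: apply Theorem \ref{pee1} not to $u$ but to a cutoff; however, to avoid the cutoff one can use the elementary observation that for $x\in V$,
$$
u(x)\ge c_V\int_V(-Au+cu)\,dy\quad\text{and}\quad \int_V u\chi\,dy\le \|\chi\|_\infty\,\E_{m_d|_V}\!\!\left[\int_0^{\tau_V}\!\! (-Au+cu)(X_s)\,ds\right]
$$
after invoking \eqref{010204-20dfg} once more on $V$ — i.e. $\int_V u\chi\,dx\preceq \int_V G^V(y,z)(-Au+cu)(z)\,\chi(y)\,dy\,dz\preceq \big(\sup_{z\in V}\int_V G^V(y,z)\chi(y)\,dy\big)\int_V(-Au+cu)\,dz$. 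Choosing $\chi$ so that $\sup_{z\in V}\int_V G^V(y,z)\chi(y)\,dy<\infty$ (possible since $G^V(\cdot,z)\in L^1$), and combining with \eqref{green-lb}, yields $\inf_{x\in V}u(x)\ge C\int_V u\chi\,dx$ with $C=c_V/\big(\|\chi\|_\infty^{-1}\sup_z\int_V G^V\chi\big)$.

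The main obstacle I anticipate is the integrability and positivity bookkeeping for the Green kernel $G^D$: one must ensure $\chi$ can be chosen \emph{bounded, strictly positive, and independent of $V$} (the statement quantifies $\chi$ before $V$), while the constants $c_V$ and $\sup_z\int_V G^V\chi$ are allowed to depend on $V$. This forces the argument to fix $\chi$ globally — e.g. $\chi(x):=\min\{1,\,\E_x\tau_D\}\cdot\rho(x)$ for a strictly positive integrable weight $\rho$, or $\chi:=G^D(x_0,\cdot)\wedge 1$ for a fixed $x_0$ — and then verify, for each $V\Subset D$ separately, the two finite/positive bounds above using the strong Feller property (continuity and strict positivity of $(x,y)\mapsto G^D(x,y)$ off the diagonal, hence a uniform positive lower bound on the compact $\overline V\times\overline V$ minus a neighborhood of the diagonal, the diagonal contribution being handled by local ellipticity). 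The rest is the two-sided comparison \eqref{fk-super}--\eqref{green-lb}, which is routine once the Green-function estimates are in place. Discarding the non-negative boundary term in \eqref{fk-super} is exactly the step that makes "weak" Harnack (one-sided, against an average) work without any restriction on the Lévy kernel $N$.
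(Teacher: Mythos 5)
There is a genuine gap, and it is fatal to the proposed chain of inequalities. The step
\[
\int_V u\chi\,dy\;\le\;\|\chi\|_\infty\,\E_{m_d|_V}\!\left[\int_0^{\tau_V}(-Au+cu)(X_s)\,ds\right]
\]
is false. The Feynman--Kac identity \eqref{010204-20dfg} on $V$ reads
\[
u(y)=\E_y\!\left[e_c(\tau_V)u(X_{\tau_V})\right]+\E_y\!\left[\int_0^{\tau_V}e_c(s)(-Au+cu)(X_s)\,ds\right],
\]
and for a non-negative supersolution \emph{both} terms on the right are non-negative, so the Green potential of $-Au+cu$ is a \emph{lower} bound for $u$, never an upper bound; the exit term $\E_y[e_c(\tau_V)u(X_{\tau_V})]$ cannot be discarded. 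Concretely, take $c\equiv 0$ and $u$ a strictly positive $A$-harmonic function on $D$: then $-Au+cu=0$ a.e., your claimed bound forces $\int_V u\chi\,dy=0$, a contradiction. The same example shows that the entire strategy of controlling $\int_V u\chi\,dx$ by $\int_V(-Au+cu)\,dz$ cannot work: the theorem must bound $\inf_V u$ below by the integral of $u$ itself, and for harmonic $u$ the source term carries no information.

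The paper's proof avoids the source term altogether. It first establishes (Lemma \ref{lm2.im9402}, via Nummelin's small-function/small-measure decomposition for the $m$-irreducible resolvent kernel) the minorization
\[
R^D_{\bar c}f(x)\ge R^D_{\bar c+2}\bar\psi(x)\int_D R^D_{\bar c}f\,d\nu,\qquad f\in B_b^+(D),
\]
with $\nu\sim m$ and $R^D_{\bar c+2}\bar\psi$ continuous and strictly positive. It then extends this inequality from potentials $R^D_{\bar c}f$ to arbitrary $\bar c$-excessive functions $w$ by writing $w$ as an increasing limit of such potentials (\cite[Proposition II.2.6]{bg}) and passing to the limit by monotone convergence. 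Finally it checks that a non-negative supersolution $u$ is $\bar c$-excessive, using precisely the Feynman--Kac inequality $u\ge e^{-\bar c t}P^D_t u$. This yields $u(x)\ge R^D_{\bar c+2}\bar\psi(x)\int_D u\,d\nu$ directly, and taking the infimum of $R^D_{\bar c+2}\bar\psi$ over the compact $\overline V$ gives \eqref{830104-20aabb}. Your opening reduction \eqref{fk-super} is correct but leads into a dead end; the missing idea is the passage through excessive functions (equivalently, through the semigroup/resolvent acting on $u$ itself rather than on $-Au+cu$).
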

The proof of the result is shown in Section \ref{prf-thm5.4}.
\medskip

A direct consequence of part 2) of Theorem \ref{strong-max} and Theorem
\ref{thm:main4a} is the following.
\begin{corollary}
\label{cor012909-20}
Under the assumptions of part 2) of Theorem \ref{strong-max} and Theorem
\ref{thm:main4a} for any $r>0$ there exist $C>0$  such that
 any non-constant  subsolution  $u$ to \eqref{eq3.1a}, that attains
its non-negative  maximum  {over $\mathcal S(D)\cup\overline D$}  at
$\hat x\in\partial D$, and  any ${\bf n}\in {\frak n}(\hat x)$ at
$\hat x\in   \partial D$  satisfies (cf \eqref{Da})
\begin{equation}
\label{010104-20az}
\underline{\partial}_{{\bn}}u(\hat x)\ge C \int_{D_r}(u(\hat x)-u(x))\chi(x)dx>0.
\end{equation}
Here  $\chi$ is as in  the statement of Theorem \ref{thm:main4a}.
\end{corollary}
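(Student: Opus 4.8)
The plan is to deduce the estimate by feeding the function $w:=u(\hat x)-u$ into the lower bound \eqref{010104-20a} of part~2) of Theorem~\ref{strong-max} and the weak Harnack inequality \eqref{830104-20aabb} of Theorem~\ref{thm:main4a}, and then to reconcile the two by elementary monotonicity of infima and integrals.

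First I would check that, after a harmless modification, $w$ is a non-negative supersolution of \eqref{eq3.1a} on $D$. Since $A$ annihilates constants, for a.e.\ $x\in D$ we have $-Aw+cw=(Au-cu)+c\,u(\hat x)\ge0$: indeed $u$ is a subsolution, so $Au-cu\ge0$ a.e.\ on $D$, while $c\ge0$ by A3) and $u(\hat x)=\max_{\mathcal S(D)\cup\overline D}u\ge0$; also $w\in W^{2,p}_{\rm loc}(D)\cap C_b(\bbR^d)$. On $\mathcal S(D)\cup\overline D$ one has $w\ge0$, and for $x\in D$ the quantity $Aw(x)$ depends only on $w$ in a neighbourhood of $x$ and on $w|_{\mathcal S_x}$ with $\mathcal S_x\subset\mathcal S(D)$ (cf.\ \eqref{cSD}); hence replacing $w$ by $w^+=\max(w,0)$ changes neither $w|_D$ nor $Aw$ on $D$, and we may assume $w\ge0$ on all of $\bbR^d$.

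Let $r_0\in(0,r_D]$ and $a>0$ be the constants furnished by part~2) of Theorem~\ref{strong-max}; then \eqref{010104-20a} reads $\underline{\partial}_{{\bn}}u(\hat x)\ge a\inf_{x\in D_{r_0/2}}w(x)>0$, so it remains to dominate $\int_{D_r}w\chi\,dx$ by a multiple of $\inf_{D_{r_0/2}}w$. I would split into two regimes. If $0<r\le r_0/2$, then $D_{r_0/2}\subset D_r\Subset D$, so Theorem~\ref{thm:main4a} with $V=D_r$ gives $C_r>0$ with $\inf_{D_{r_0/2}}w\ge\inf_{D_r}w\ge C_r\int_{D_r}w\chi\,dx$. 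If $r>r_0/2$, then $D_r\subset D_{r_0/2}\Subset D$, and Theorem~\ref{thm:main4a} with $V=D_{r_0/2}$ gives $C'>0$ with $\inf_{D_{r_0/2}}w\ge C'\int_{D_{r_0/2}}w\chi\,dx\ge C'\int_{D_r}w\chi\,dx$, the last step because $w\chi\ge0$ and $D_r\subset D_{r_0/2}$. In either case $\inf_{D_{r_0/2}}w\ge \widetilde C\int_{D_r}w\chi\,dx$ for some $\widetilde C>0$ depending only on $r$ and the data, whence \eqref{010104-20az} follows with $C:=a\widetilde C$.

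For the strict inequality, note that \eqref{010104-20a} already gives $\inf_{D_{r_0/2}}w>0$, so $w>0$ throughout the nonempty set $D_{r_0/2}$ (which contains the half-radius concentric ball of any interior ball of radius $r_D$); since $\chi>0$ on $D$ and $D_r\cap D_{r_0/2}=D_{\max(r,r_0/2)}$ is nonempty whenever $D_r\ne\emptyset$, we get $\int_{D_r}w\chi\,dx>0$. There is no genuinely hard step here: the only points requiring care are the reduction to a $w$ that is non-negative on all of $\bbR^d$, and keeping the set on which the weak Harnack inequality is applied compactly contained in $D$, which is exactly what forces the case distinction $r\lessgtr r_0/2$.
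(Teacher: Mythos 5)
Your proof is correct and is exactly the argument the paper intends: the corollary is stated there as a ``direct consequence'' of part 2) of Theorem \ref{strong-max} and Theorem \ref{thm:main4a} with no written proof, and your chain (verify $w=u(\hat x)-u$ is a non-negative supersolution, apply the Hopf bound \eqref{010104-20a}, then the weak Harnack inequality on a set $V\Subset D$ chosen according to whether $r\lessgtr r_0/2$) is the natural way to make that precise. Your care with replacing $w$ by $w^+$ and with the implicit requirement $D_r\neq\emptyset$ for the strict inequality are both appropriate refinements of details the paper leaves unsaid.
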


\section{Proofs of the Feynman-Kac formula and weak maximum principle}

\label{sec6}

\subsection{Proof of Proposition  \ref{prop010204-20} - the Feynman-Kac formula}
\label{sec3.1} 


Suppose that $u\in  W^{2,p}_{\rm loc}(D)\cap C_b(\BR^d)$ for
some $p>d$ is a
subsolution of  \eqref{eq3.1a}.   We can find
 $(u_n)\subset   C^2_c(\BR^d)$ such that  
{$\|u-u_n\|_{W^{2,p}(V)}\to0$} for any $V \Subset D$.
This in particular implies that
$u_n\rightarrow
u$ and $\nabla u_n\rightarrow \nabla u$ uniformly  on $V$,
and $\|D^2 u_n-D^2 u\|_{L^p(V)}\rightarrow 0$ for any $V \Subset D$,
as $n\to+\infty$. 
As a result for any such $V$, we have 
\begin{equation}
\label{042904-20}
\lim_{n\to+\infty}\|Au_n-Au\|_{L^p(V)}=0.
\end{equation}
By \cite[Theorem 4.3]{Stroock},
\begin{equation}
\label{042904-20v12}
M^n_t:= u_n(X_t)-u_n(x)-\int_0^tAu_n(X_r)\,dr,\, t\ge 0
\end{equation}
is a martingale under the measure $P_x$ for every $x\in \BR^d$.  By the It\^o formula, applied to
$e_c(t)u_n(X_t)$, for any 
$V \Subset D$
 we get
\begin{align*}
u_n(x)&=e_c(\tau_V\wedge t)u_n(X_{\tau_V\wedge
        t})+\int_0^{\tau_V\wedge t}e_c(r) (-Au_n+cu_n)(X_r)\,dr\\&
\quad-\int_0^{\tau_V\wedge t}e_c(r)\,dM^n_r,\quad t\ge0,\quad P_x\mbox{-a.s.}
\end{align*}
Hence, we conclude that
\begin{align}
\label{050909-20a}
u_n(x)=\E_x\Big[
  e_c(\tau_V\wedge t)u_n(X_{\tau_V\wedge t})\Big]+\E_x\left[\int_0^{\tau_V\wedge t}e_c(s)
  (-Au_n+cu_n)(X_s)\,ds\right],\quad x\in V.
\end{align}
Recall
the Krylov estimate
\begin{equation}
\label{anulova}
\sup_{x\in\bar
  V}\E_x\left[\int_0^{\tau_V}\left| f(X_t) \right|\,dt\right]\le
  C\|f\|_{L^d(V)},\quad f\in L^d(V)
\end{equation}
for some $C>0$, see \cite[Corollary p. 143]{AP}, or \cite[Thm
$III_{14}$, (53)]{LM}.
Letting $n\rightarrow \infty$ in \eqref{050909-20a} and using  \eqref{anulova} we get
\begin{align}
\label{050909-20}
u(x)=\E_x\Big[ e_c(\tau_V\wedge t)u(X_{\tau_V\wedge t})\Big]+\E_x\left[\int_0^{\tau_V\wedge t}e_c(s) (-Au+cu)(X_s)\,ds\right],\quad x\in V.
\end{align} 
Allowing then $t\rightarrow \infty$,  using  \eqref{anulova} and
the fact that $P_x(\tau_V<\infty)=1$, we conclude part 1) of the
proposition.

Concerning the proof of part 2). Let $\{D_n\}$ be an increasing sequence of relatively compact open
subsets of $D$ such that $D_n\subset D_{n+1}\Subset D$ and
$\bigcup_{n\ge 1} D_n=D$. Then the exit time $\tau_{D_n}$ increases to $
\tau_D$, as $n\to+\infty$. Taking $D_n$ in place of $V$ in \eqref{050909-20} and letting
$n\rightarrow \infty$, we get (using   quasi-left continuity of
the process, see Theorem IV.3.12, p. 181 of \cite{ethier-kurtz}) 
formula \eqref{010204-20dfg} for $x\in D$. Next, using the fact that
$Au-cu\ge0$ on $D$ (since $u$ is a subsolution of \eqref{eq3.1a}),
the fact that $P_x(\tau_D<\infty)=1$, and the
monotonne convergence theorem, we may let $t\rightarrow \infty$ in
\eqref{010204-20dfg}.\qed

 \subsection{Proof of Proposition  \ref{prop012804-20} - the weak maximum principle}



\label{sec5.1b}

 {As an immediate application of   the Feynman-Kac formula (Proposition
   \ref{prop010204-20}) we 
show the weak maximum principle formulated in Proposition \ref{prop012804-20}. 
Using formula \eqref{010204-20dfg}, remembering that $-Au+cu\le 0$, 
and letting $t\to+\infty$
we conclude that
\begin{align}
\label{eq.ad.wmpv71}
u(x)\le \mathbb E_x\left[e_c(\tau_D)u(X_{\tau_D})\right]\le \mathbb E_xu^+(X_{\tau_D}),\quad x\in D.
\end{align}
Since $D$ is bounded, cf
\eqref{012904-20},  we have  $\tau_D<+\infty$, $P_x$ a.s. This in particular
implies   that $X_{\tau_D}$ is a.s. well defined.}

Using the Ikeda-Watanabe formula, see \cite[Remark 2.46,
page 65]{BSW}, we obtain
\begin{equation}
\label{010793-20dfg1v}
\begin{split}
&\mathbb E_{x_0}\left[\sum_{0<s\le\tau_D}\mathbf{1}_{D}(X_{s-})\mathbf{1}_{\mathcal
    S(D)^c\setminus D}(X_s)\right]\\
&
=\mathbb
E_{x_0}\left[\int_{\BR^d}\int_0^{\tau_D}\mathbf{1}_{D}(X_{s-})\mathbf{1}_{\mathcal
    S(D)^c\setminus D}
(y)N(X_{s-},dy-X_{s-})\,ds\right].
\end{split}
\end{equation}
From the definition of $\mathcal S(D)$, cf \eqref{cSD}, we conclude
that the right-hand side of the above equation vanishes. Thus,
$X_{\tau_D}\in \mathcal S(D)\setminus D$, if
$X_{\tau_D}\not=X_{\tau_D-}$, or $X_{\tau_D}\in \partial D$, if
otherwise. Summarizing, we have shown that
\[
P_x(X_{\tau_D}\in (\mathcal S(D)\setminus D)\cup \partial D)=1,\quad x\in D
\]
and estimate \eqref{032804-20} follows. When $c\equiv 0$ we get \eqref{eq.ad.wmpv71}, with $u^+$ replaced by $u$.}\qed

\section{Properties of the resolvent  - proof of Theorem \ref{thm020809-20}}

\label{sec5.1}


Suppose that $D$ is a bounded domain.
Recall that $R^D_\al f$ is defined in \eqref{RDt} for all
$\al\ge0$ and $f\in B^+(D)$.
Using condition \eqref{012904-20} we infer that the operator 
$R^D_\al: L^\infty(D)\to L^\infty(D)$ is bounded for any $\al\ge 0$.
The operators  $(R^D_\al )_{\al\ge 0}$ satisfy the resolvent
identity
\begin{equation}
\label{res-id}
R^D_\al-R^D_\beta=(\beta-\al)R^D_\al R^D_\beta,\quad \al,\beta\ge 0.
\end{equation}
Thanks to assumptions A1)-A2) there exists $C>0$ such that 
\begin{equation}
\label{R-bdd}
\sup_{x\in D}\E_x\left[\int_0^{\tau_D}|f(X_t)|dt\right]\le
C\|f\|_{L^d(D)},\quad f\in L^d(D).
\end{equation}
see (53) of \cite{LM}, or Corollary 2 of \cite{mikulevicius}.
The estimate  implies that in fact $R^D_\al$ extends
to the operator $R^D_\al: L^d(D)\to L^\infty(D)$ for each $\al\ge0$.
This in turn allows us to conclude that 
for any $\al\ge 0$ there exists $r^D_\al:D\times D\to [0,+\infty)$ such that
\begin{equation}
\label{020205-20}
R^D_\al f(x)=\int_{D}r^D_\al(x,y)f(y)\,dy,\quad x\in D
\end{equation}
and
\begin{equation}
\label{020205-20a}
\mathop{\rm
  esssup}_{x\in D}\|r^D_\al(x,\cdot)\|_{L^{d/(d-1)}(D)}\le
\|R_\al\|_{L^d\to L^\infty}.
\end{equation}
The above implies  that $R^D_\al: L^d(D)\to L^d(D)$ is
compact for any $\al\ge 0$. This can be easily seen, as from any
bounded sequence $(f_n) \subset L^d(D)$ we can select a subsequence $(f_{n_k})$ 
weakly converging to some $f$. Using \eqref{020205-20} we conclude that 
$\lim_{k\to+\infty}R^D_\al f_{n_k}(x)=R^D_\alpha f(x)$ for a.e. $x\in D$. Since
$\left(\|R^D_\al f_{n_k}\|_{L^\infty}\right)$  is bounded, this implies the
strong  convergence of $(R^D_\al f_{n_k})$ to $R^D_\al f$ in $L^d(D)$.

By the support theorem of  \cite[Section 4]{Foondun}, or by \cite[Corollary 2]{mikulevicius2}  for any $x\in D$, 
\begin{equation}
\label{eq.uewo.st}
r^D_\alpha(x,\cdot)>0, \quad m\mbox{-a.e.}\,\,\mbox{on}\,\,  D, \alpha\ge 0.
\end{equation}
By \cite[Theorem 1]{mikulevicius} we infer that the  resolvent
$R^D_\alpha$ is strongly Feller for each ${\alpha\ge 0}$, i.e.
\begin{equation}
\label{eq.uewo.sf}
R^D_\alpha( B_b(D))\subset C_b(D),\quad \alpha\ge 0.
\end{equation}

{Using the kernel $r^D_\alpha(\cdot,\cdot)$ we can define the
  adjoint (with respect to the Lebesgue measure)
  resolvent operator  $\hat R^D_\al :L^{d/(d-1)}(D)\to L^{d/(d-1)}(D)$ given by 
\begin{equation}
\label{020205-20d}
\hat R^D_\al \psi(y)=\int_{D}r^D_\al(x,y)\psi(x)\,dx,\quad y\in
D,\,\psi \in L^{d/(d-1)}(D).
\end{equation}
We say that it is strongly Feller, if  the analogue of
\eqref{eq.uewo.sf} holds for $\hat R^D_\al$.}

\subsection{The   proof of Theorem \ref{thm020809-20}}

We have already concluded that $R^D_\al: L^d(D)\to L^d(D)$ is
compact for any $\al\ge 0$. In addition $\rho(R_1)$ - the  spectral
radius of $R^D_1$ - belongs to $[0,1]$. Moreover, by
\eqref{eq.uewo.st}, operator $R^D_1$
is {\em irreducible}, i.e.
\begin{equation}
\label{070909-20}
\mbox{ for any $f\in B^+(D)$ such that
  $\int_Df\,dx>0$ we have $R^D_1f(x)>0,\, x\in D$}.
\end{equation}
By virtue of the  Jentzsch Theorem, see
Theorem 6.6. of \cite{schaefer}, the spectral radius $\rho(R^D_1)$ is
positive and it equals to the (unique) simple eigenvalue
of $R^D_1$. The corresponding eigenvector  
 $\varphi_D$ is strictly positive on $D$. 
{ With the help of the resolvent identity \eqref{res-id}, applied
  to $\varphi_D$, with
$\al+1$ in place of $\al$ and $\beta=1$, one can easily check that 
$$
R^D_{\al+1}\varphi_D(x)=\frac{\varphi_D(x)}{\rho^{-1}(R^D_1)
  +\al},\quad  \,\al>0,\,x\in D.
$$
Thus, inverting the Laplace transform, we obtain
$$
e^{-t}P^D_t\varphi_D(x)=\exp\left\{-t \rho^{-1}(R^D_1) \right\}\varphi_D(x),\quad t\ge0,
$$
for each $x\in D$. 
This in turn leads to \eqref{eigen}, with
$\la_D:=\rho^{-1}(R^D_1)-1$}.  {Clearly, $\lambda_D>0$ as otherwise, this would contradict \eqref{012904-20}.}
\qed

\subsection{Irreducibility measure and its application}

\label{sec10a}

We recall the notion of   an irreducibility
measure associated with  a non-negative kernel.
Given $\al>0$ we let $K_\al:D\times{\cal B}(D)\to[0,+\infty)$ be a
substochastic kernel defined by 
{$$
K_\al(x,B):= R^D_\alpha1_B(x)\quad\text{ for
}\quad (x,B)\in D\times{\cal B}(D).
$$ }

\begin{definition}{(cf \cite[Section 2.2]{Num})}
\label{df011607-20}
Suppose that $\nu$ is  {a  non-trivial}  Borel measure on $D$.
We say that
the kernel $K_\al$ is  $\nu$-irreducible, if
for any $B\in {\cal B}(D)$ such that $\nu(B)>0$ and $x\in D$ we have
{$$
K^n_\al(x,B):=\int_{D} K_\al(x,dy_1)\Big(\int_{D} K_\al(y_1,dy_2)\ldots \int_{D} K_\al(y_{n-1},B)\Big)>0
$$
 for some $n\ge1$. By convention $y_0:=x$.} Measure  $\nu$ is  called
then an {\em irreducibility measure} for $K_\al$.
 {The kernel is said to be irreducible if it is
  irreducible for some measure.}
{A measure $\mu$ is
called a {\em maximal irreducibility measure} for $K_\al$ if it is an
irreducibility measure  and  any irreducibility measure $\nu$ for the kernel
     satisfies $\nu\ll \mu$.}
\end{definition}

\begin{lemma}
\label{lm2.im9402}
   For each $\al\ge0$ there exists a pair  of   functions
$\bar\psi_{\al},\bar\phi_{\alpha}\in B^+_b(D)$ { such that
both $
\bar\psi_{\al}>0$ and $\bar\phi_{\alpha}>0$,  $m$-a.e., and
\begin{equation}
\label{RDabc}
R^D_\al f(x)\ge R^D_{\alpha+1}\bar \psi_{\al}(x) \int_D R^D_{\alpha+1} f \bar\phi_{\alpha} dx,\quad x\in
D,\quad f\in B^+_b(D).
\end{equation}}
\end{lemma}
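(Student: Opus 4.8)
The plan is to build the desired lower bound from the resolvent identity together with the irreducibility of the kernel $K_{\alpha+1}$, exactly as in the Nummelin-type "small set" / minorization arguments. First I would record the resolvent identity in the form
\begin{equation}
\label{pl1}
R^D_\alpha = R^D_{\alpha+1} + R^D_\alpha R^D_{\alpha+1} = R^D_{\alpha+1}\sum_{n\ge 0}(R^D_{\alpha+1})^n\;\; \text{(formally)},
\end{equation}
and, more usefully, its iterated consequence $R^D_\alpha f \ge (R^D_{\alpha+1})^n f$ for every $n\ge 0$ and $f\in B^+_b(D)$ — this follows because $R^D_\alpha - R^D_{\alpha+1} = R^D_\alpha R^D_{\alpha+1} \ge 0$ and both operators are positivity-preserving. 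In particular $R^D_\alpha f \ge R^D_{\alpha+1} R^D_{\alpha+1} f$. So it suffices to produce a minorization of the two-step operator $R^D_{\alpha+1}R^D_{\alpha+1}$ of the rank-one form on the right of \eqref{RDabc}.

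The second step is the minorization itself. I would use the density representation \eqref{020205-20}: $R^D_{\alpha+1}f(x)=\int_D r^D_{\alpha+1}(x,y)f(y)\,dy$, with $r^D_{\alpha+1}(x,\cdot)>0$ $m$-a.e.\ for every $x$ by \eqref{eq.uewo.st}, and the strong Feller property \eqref{eq.uewo.sf}. The goal is to find a nontrivial $\bar\phi_\alpha\in B^+_b(D)$ and a nontrivial $\bar\psi_\alpha\in B^+_b(D)$ and a Borel set $E\subset D$ of positive Lebesgue measure such that
\begin{equation}
\label{pl2}
r^D_{\alpha+1}(x,y)\ge \bar\psi_\alpha(x)\,\bar\phi_\alpha(y)\quad\text{for a.e. }(x,y)\in D\times E.
\end{equation}
Granting \eqref{pl2}, one writes, for $f\in B^+_b(D)$,
\begin{align*}
R^D_{\alpha+1}R^D_{\alpha+1}f(x)
&=\int_D r^D_{\alpha+1}(x,z)\,R^D_{\alpha+1}f(z)\,dz
\ge \int_E r^D_{\alpha+1}(x,z)\,R^D_{\alpha+1}f(z)\,dz\\
&\ge \bar\psi_\alpha(x)\int_E \bar\phi_\alpha(z)\,R^D_{\alpha+1}f(z)\,dz
= \bar\psi_\alpha(x)\int_D R^D_{\alpha+1}f\,(\bar\phi_\alpha\mathbf 1_E)\,dx,
\end{align*}
which is \eqref{RDabc} after renaming $\bar\phi_\alpha\mathbf 1_E$ as $\bar\phi_\alpha$. (One must also check that $R^D_{\alpha+1}\bar\psi_\alpha$ dominates $\bar\psi_\alpha$ appropriately, or rather simply that the right-hand side as displayed in \eqref{RDabc} is of the stated form; since $R^D_\alpha f\ge R^D_{\alpha+1}R^D_{\alpha+1}f$ and we want $R^D_{\alpha+1}\bar\psi_\alpha$ on the right, it is cleanest to apply the three-step bound $R^D_\alpha f\ge (R^D_{\alpha+1})^3 f = R^D_{\alpha+1}\big[(R^D_{\alpha+1})^2 f\big]$ and put the outer $R^D_{\alpha+1}$ onto $\bar\psi_\alpha$: write $(R^D_{\alpha+1})^2 f(z)\ge \bar\psi_\alpha(z)\int_D R^D_{\alpha+1}f\,\bar\phi_\alpha\,dx$ from the two-step minorization and then apply $R^D_{\alpha+1}$ to both sides in $z$, using positivity.)

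The third step — and the main obstacle — is establishing the pointwise product minorization \eqref{pl2} of the density $r^D_{\alpha+1}$, since we have essentially no regularity of the kernel beyond measurability, positivity $m$-a.e., and the strong Feller property of the associated resolvent. Here I would argue as follows. Fix a reference point $x_0\in D$. By \eqref{eq.uewo.st}, $r^D_{\alpha+1}(x_0,\cdot)>0$ a.e., so there is $\delta>0$ and a set $E_0\subset D$ with $m(E_0)>0$ on which $r^D_{\alpha+1}(x_0,\cdot)\ge\delta$. Using the resolvent identity once more, $r^D_{\alpha+1} = r^D_{\alpha+1}\ast(\text{something nonnegative})$ plus positivity, I would first pass from a single good $x_0$ to a whole ball: apply $R^D_{\alpha+1}$ in the $x$-variable to the inequality $r^D_{\alpha+1}(x_0,y)\ge \delta\mathbf 1_{E_0}(y)$ via the Chapman–Kolmogorov-type relation
$$
r^D_{\alpha+1}(x,y) \;\text{(after one resolvent step)}\;\ge\; \int_D r^D_{\alpha+1}(x,z)\,r^D_{\alpha+1}(z,y)\,dz,
$$
and use that $z\mapsto \int_D r^D_{\alpha+1}(x,z)\mathbf 1_{E_0'}(z)\,dz = R^D_{\alpha+1}\mathbf 1_{E_0'}(x)$ is continuous and strictly positive in $D$ by \eqref{eq.uewo.sf}–\eqref{eq.uewo.st}, hence bounded below on any $\overline{B(x_0,r)}\Subset D$ by some $c>0$. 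Combining, $r^D_{2(\alpha+1)}(x,y)\ge c\,\delta\,\mathbf 1_{E_0}(y)$ for all $x$ in that ball — giving the product form with $\bar\psi$ an indicator of the ball and $\bar\phi$ an indicator of $E_0$, at the cost of replacing $\alpha+1$ by $2(\alpha+1)$; this is harmless because $R^D_{\alpha+1}\ge R^D_{2(\alpha+1)}$ pointwise (again from the resolvent identity and positivity), so any minorization for the larger index transfers. Finally, to upgrade "for $x$ in a fixed ball" to "for all $x\in D$" I would use irreducibility of $K_{\alpha+1}$ — the support/irreducibility statement \eqref{eq.uewo.st} guarantees that from any $x\in D$ one can reach that ball with positive probability in finitely many resolvent steps, and a standard compactness/continuity argument (covering $D$, or the relevant compact exhaustion, and using the strong Feller property to get uniform lower bounds on compacta) then yields a global nontrivial $\bar\psi_\alpha\in B^+_b(D)$. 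Tracking indices carefully and making sure all the "a.e." statements are promoted to genuine pointwise bounds where needed (using continuity of the resolvents applied to bounded functions) is the delicate bookkeeping, but no new analytic input beyond \eqref{res-id}, \eqref{020205-20}, \eqref{eq.uewo.st} and \eqref{eq.uewo.sf} is required.
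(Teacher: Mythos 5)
Your resolvent-identity bookkeeping is sound and matches what the paper does: from \eqref{res-id} one gets $R^D_\alpha f\ge R^D_\alpha R^D_{\alpha+1}f\ge (R^D_{\alpha+1})^2f$, and once a rank-one minorization of some iterate is available, composing with one more copy of $R^D_{\alpha+1}$ on the outside (to land on $\bar\psi_\alpha$) and one on the inside (to land on $f$) yields exactly \eqref{RDabc}. The gap is in your third step, which is where the entire content of the lemma lives. You correctly find, for a single $x_0$, a $\delta>0$ and a set $E_0$ of positive measure with $r^D_{\alpha+1}(x_0,\cdot)\ge\delta$ on $E_0$. But to make the Chapman--Kolmogorov composition produce a product lower bound you need a set $E_0'$ of \emph{positive Lebesgue measure} of points $z$ that all satisfy $r^D_{\alpha+1}(z,y)\ge\delta'$ for $y\in E_0$ (a ``small set''); your argument only supplies the single point $z=x_0$, which contributes nothing to the integral $\int_D r^D_{\alpha+1}(x,z)r^D_{\alpha+1}(z,y)\,dz$, and $E_0'$ appears in your sketch without ever being constructed. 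Since the kernel is merely measurable, the sets $\lbrace y: r^D_{\alpha+1}(z,y)\ge\delta\rbrace$ can a priori vary arbitrarily with $z$, so $\inf_{z\in E_0'}r^D_{\alpha+1}(z,\cdot)$ may vanish a.e.; neither the strong Feller property \eqref{eq.uewo.sf} (which gives continuity of $R^D_{\alpha+1}\mathbf{1}_B$, hence lower bounds on \emph{averages} $\int_{E_0}r^D_{\alpha+1}(x,z)\,dz$ over compacta, not pointwise lower bounds on the density) nor the a.e.\ positivity \eqref{eq.uewo.st} delivers this.

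The existence of a simultaneous product minorization for an irreducible kernel is a genuine theorem (Jain--Jamison/Nummelin), and the paper's proof simply invokes it: $m$-irreducibility of $K_\alpha$ follows from \eqref{070909-20}, and \cite[Theorem 2.1]{Num} produces a non-trivial small function $\bar\psi_\alpha$ and small measure $\nu_\alpha$ with $R^D_\alpha f\ge\bar\psi_\alpha(x)\int_D f\,d\nu_\alpha$; the remaining work is the resolvent manipulation you already have, plus the observation that the measure $B\mapsto\int_D R^D_{\alpha+1}\mathbf{1}_B\,d\nu_\alpha$ is equivalent to Lebesgue measure (again by \eqref{eq.uewo.st}), so the small measure can be replaced by $\bar\phi_\alpha\,dx$. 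To repair your argument you must either cite that result or reproduce its proof; it cannot be replaced by the continuity and positivity considerations you describe.
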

\proof

  Estimate \eqref{070909-20} implies that the
kernel 
$
K_\al(\cdot,\cdot)$ is {\em $m$-irreducible}. Here, as we recall
$m$ is the $d$-dimensional Lebesgue measure. {By \eqref{020205-20} we
conclude also that $\int_DK_\al(x,\cdot)dx\ll m$.
 Using \cite[part ii) of Proposition 2.4]{Num} we infer that the
 Lebesgue measure is in fact a
 maximal irreducible measure for the kernel.}
By \cite[Theorem 2.1]{Num} 
there exist an integer $n\ge1$, a    function  $\psi_{\al}:D\rightarrow
[0,\infty)$ and a non-trivial   Borel measure  $\nu_{\al}$ on $D$,
 the so called {\em small function} and {\em small measure}, such that
 $\int_D \psi_{\al}dx>0$ and 
{\begin{equation}
\label{eq.sfsm1n}
\Big(R_{\alpha+1}^D\Big)^nf(x)\ge \psi_{\al}(x)\int_D fd \nu_{\al},\quad f\in B^+(D),\quad x\in D.
\end{equation}
We claim that in fact from \eqref{eq.sfsm1n} it follows that 
\begin{equation}
\label{eq.sfsm1}
R_{\alpha}^D  f(x)\ge R_{\alpha+1}^Df(x)\ge \psi_{\al}(x)\int_D fd \nu_{\al},\quad f\in B^+(D),\quad x\in D.
\end{equation}
Indeed, we obviously have $R_{\alpha+1}^Df\le R_{\alpha}^Df$ for  any $f\in B^+(D)$.
By the resolvent identity, see \eqref{res-id},  
\begin{equation}
\label{010310-20} R_{\alpha}^D
R_{\alpha+1}^D f=R_{\alpha+1}^D
R_{\alpha}^D f =R_{\alpha}^Df-
R_{\alpha+1}^D f\le R^D_{\alpha} f \quad \mbox{for any $f\in B^+(D)$}.
\end{equation} 
If $n\ge2$ in \eqref{eq.sfsm1n}, then we can write
\begin{equation}
\label{eq.sfsm1n1}
\begin{split}
&
R_{\alpha}^D \Big(R_{\alpha+1}^D\Big)^{n-2}f(x)\ge \Big(R_{\alpha}^D
R_{\alpha+1}^D\Big)\Big(R_{\alpha+1}^D\Big)^{n-2}f(x)\\
&
\ge \Big(R_{\alpha+1}^D\Big)^nf(x)\ge \psi_{\al}(x)\int_D fd \nu_{\al},\quad f\in B^+(D),\quad x\in D.
\end{split}
\end{equation}
Iterating this procedure, we conclude \eqref{eq.sfsm1}.}

Applying \eqref{eq.sfsm1}
for $R_{\alpha+1}^D f$ in place of $f$, we get
\begin{equation}
\label{eq.sfsm2}
R_{\alpha} ^Df(x)\ge   \psi_{\al}(x)
\int_Df d\bar\nu_{\al},\quad f\in B^+(D),\quad x\in D,
\end{equation}
 where 
$$
\bar\nu_{\al}(B):= \int_BR^D_{\alpha+1}1_Bd \nu_{\al},\quad 
 B\in{\cal B}(D).
$$ Applying $R_{\alpha+1}^D$ to both sides of
 \eqref{eq.sfsm2} and using   again \eqref{010310-20},
 we get 
 \begin{equation}
\label{eq.sfsm3}
R_{\alpha} ^Df(x)\ge   \bar\psi_{\al}(x)
\int_Df d\bar \nu_{\al},\quad f\in B^+(D),\quad x\in D,
\end{equation}
with 
$\bar \psi_{\al}:= R^D_{\alpha+1} \psi_{\al}>0$, $m$ a.e.
Applying   $R^D_{\alpha+1}$ to both sides of \eqref{eq.sfsm3}
and using \eqref{010310-20} we get 
\begin{equation}
\label{eq.sfsm31}
R_{\alpha} ^Df(x)\ge   R_{\alpha+1} ^D\bar\psi_{\al}(x)
\int_Df d\bar \nu_{\al},\quad f\in B^+(D),\quad x\in D.
\end{equation}
Invoking \eqref{070909-20} we infer  that $\bar \nu_{\al}\ll m$. 
Thus, there exists a non-trivial $\bar \phi_\alpha\in B_b^+(D)$ such that $d\bar \nu_{\al}=\bar \phi_\alpha\,dx$.
{By \eqref{eq.uewo.st}, $\bar \phi_\alpha$ is strictly positive, $m$-a.e.}.
{Therefore, applying \eqref{eq.sfsm31}
with $f$ replaced by  $R_{\alpha+1}^D f$  we conclude  \eqref{RDabc}.}\qed

\subsection{Continuity of the resolvent  to the boundary}

\label{sec7.5}

{Recall that a point $\hat x\in\partial D$ is called {\em Dirichlet regular} if
$
P_{\hat x}(\tau_D>0)=0.$
If all the points of the boundary of $D$ are Dirichlet regular, we say that $D$ is Dirichlet regular.
It is well known (see \cite[Theorem 1.12]{De}) that if
$(R_\alpha^D)_{\alpha> 0}$ is strongly Feller, i.e. \eqref{eq.uewo.sf} holds, and $\hat x \in\partial D$ is Dirichlet regular, then
\begin{equation}
\label{eq4.grd1}
\lim_{D\ni x\rightarrow \hat x} \E_{x}\tau_D=0.
\end{equation}
Therefore, for a Dirichlet regular $D$ and $g\in B^+_b(D)$ function $R^D_\al g$ is continuous in $\bar D$, and 
\[
\lim_{D\ni x\rightarrow \hat x} R^D_\al g(x)=0,\quad \mbox{for any $\hat x\in\partial D$.}
\]
}

\begin{proposition}
\label{prop021205-20}
Suppose that   $D$ is bounded and 
conditions A1), A2) are in force. If $D$ satisfies  the exterior ball
condition (see Definition \ref{def3.1z}), then $D$ is Dirichlet
regular.
\end{proposition}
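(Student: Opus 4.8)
The plan is to prove Dirichlet regularity at each boundary point separately. Fix $\hat x\in\partial D$ and an exterior ball $B(z,\rho)\subset D^c$ at $\hat x$, so that $\hat x\in\partial B(z,\rho)$. Since $\{\tau_D=0\}\in{\cal F}_{0+}$, Blumenthal's $0$–$1$ law (valid for the strong Markov family $(P_x)_{x\in\BR^d}$) gives $P_{\hat x}(\tau_D>0)\in\{0,1\}$, so it suffices to rule out the value $1$. As $B(z,\rho)\subset D^c$ one has $\tau_D\le\sigma:=\inf\{t>0:X_t\in B(z,\rho)\}$, hence $\{X_t\in B(z,\rho)\}\subset\{\sigma\le t\}$ and $P_{\hat x}(\sigma=0)=\lim_{t\downarrow0}P_{\hat x}(\sigma\le t)\ge\liminf_{t\downarrow0}P_{\hat x}(X_t\in B(z,\rho))$. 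Therefore it is enough to show
\[
\liminf_{t\downarrow0}P_{\hat x}\big(X_t\in B(z,\rho)\big)>0 ,
\]
since this forces $P_{\hat x}(\sigma=0)=1$, whence $\tau_D=0$ $P_{\hat x}$-a.s., i.e.\ $\hat x$ is Dirichlet regular.

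To bound $P_{\hat x}(X_t\in B(z,\rho))$ from below I would first localise. Put $\tau:=\tau_{B(\hat x,1)}$; since $X_0=\hat x$ lies in the open ball $B(\hat x,1)$ and the trajectories are c\`adl\`ag, $\tau>0$ $P_{\hat x}$-a.s.\ and $P_{\hat x}(\tau\le t)\to0$ as $t\downarrow0$, so it suffices to bound $P_{\hat x}(X_{t\wedge\tau}\in B(z,\rho))$ from below. Applying the defining martingale property \eqref{Mtf} to coordinate functions cut off outside a large ball, one gets on $[0,\tau)$ the decomposition
\[
X_{t\wedge\tau}=\hat x+\int_0^{t\wedge\tau}b(X_s)\,ds+M^c_{t\wedge\tau}+M^d_{t\wedge\tau},
\]
where $M^c$ is a continuous local martingale with $\langle M^{c,i},M^{c,j}\rangle_t=\int_0^{t\wedge\tau}q_{i,j}(X_s)\,ds$ and $M^d$ is the compensated--jump martingale. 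Every jump occurring strictly before $\tau$ moves the process within $B(\hat x,1)$, hence has modulus $<2$, so by \eqref{Nx1} and the L\'evy system formula (as in the proof of Proposition \ref{prop012804-20}) the jump part stays at the diffusive scale, $\E_{\hat x}|M^d_{t\wedge\tau}|^2\le4N_*t$, while $\big|\int_0^{t\wedge\tau}b(X_s)\,ds\big|\le\|b\|_\infty t$ and $\E_{\hat x}|M^c_{t\wedge\tau}|^2\le M_A\,t$ (cf.\ \eqref{MA}).

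The core step is a small--ball estimate using uniform ellipticity. Writing $n:=(\hat x-z)/\rho$, the identity $|X_t-z|^2=|X_t-\hat x|^2+2\rho\langle X_t-\hat x,n\rangle+\rho^2$ shows that $\{|X_{t\wedge\tau}-\hat x|\le R\sqrt t\}\cap\{\langle X_{t\wedge\tau}-\hat x,n\rangle<-\varepsilon\sqrt t\}\subset\{X_{t\wedge\tau}\in B(z,\rho)\}$ once $t<(2\rho\varepsilon/R^2)^2$. By \eqref{la-K} on the compact set $\bar B(\hat x,1)$, the continuous martingale $Y^c_t:=\langle M^c_{t\wedge\tau},n\rangle$ has $\langle Y^c\rangle_t=\int_0^{t\wedge\tau}\langle n,{\bf Q}(X_s)n\rangle\,ds\ge\lambda\,(t\wedge\tau)$ with $\lambda:=\lambda_{\bar B(\hat x,1)}>0$, so by the Dambis--Dubins--Schwarz time change $Y^c$ is a Brownian motion run for a length $\ge\lambda t$ on $\{\tau>t\}$; one deduces that $P_{\hat x}\big(\langle X_{t\wedge\tau}-\hat x,n\rangle<-\varepsilon\sqrt t\big)$ is bounded below uniformly in small $t$ for $\varepsilon$ small, after removing the $O(\sqrt t)$ contributions of the drift and of $M^d$ off an event of probability $\le C(N_*+M_A)/R^2$ by Chebyshev, $R$ fixed large; the same bound gives $P_{\hat x}(|X_{t\wedge\tau}-\hat x|>R\sqrt t)\le C(N_*+M_A)/R^2$. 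Combining these, with $\varepsilon$ small, $R$ large and then $t$ small, yields $\liminf_{t\downarrow0}P_{\hat x}(X_{t\wedge\tau}\in B(z,\rho))>0$, which together with $P_{\hat x}(\tau\le t)\to0$ is what was needed; doing this for each $\hat x\in\partial D$ proves the proposition. (Equivalently: for the local operator $L$ the function $w(x)=e^{-\mu\rho^2}-e^{-\mu|x-z|^2}$, with $\mu$ large depending only on $\lambda_{\bar B(\hat x,1)},\|{\bf Q}\|_\infty,\|b\|_\infty,\rho$, is a classical supersolution of $Lv=0$ on the annulus $\{\rho/2<|x-z|<3\rho/2\}$, positive on $D$ near $\hat x$ and vanishing at $\hat x$ — the standard barrier giving regularity for the $L$-diffusion; the estimate above transfers that regularity from $L$ to $A$.)

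The step I expect to be the main obstacle is exactly this small--ball estimate: one must show, uniformly as $t\downarrow0$ and using only that the coefficients are bounded Borel and that $N$ obeys \eqref{Nx1}, that the nondegenerate diffusive increment of order $\sqrt t$ in the direction $-n$ is not cancelled by the (possibly asymmetric, possibly infinite--activity) jump part $M^d$. The natural device is the orthogonality $[Y^c,M^d]=0$, which keeps the conditional variance of $Y^c$ at least $\lambda t$ given the discontinuous part of the trajectory and which one then converts into an anti--concentration bound via the time change; the delicate point is that $\langle Y^c\rangle$ is not independent of the Brownian motion it produces, so the conditioning must be handled carefully (e.g.\ through an optional--time--change or Girsanov argument), but this is routine once the decomposition above is in place.
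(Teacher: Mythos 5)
Your route is genuinely different from the paper's: you argue probabilistically, via Blumenthal's $0$--$1$ law and a small-time hitting estimate for the exterior ball, whereas the paper invokes the potential-theoretic criterion of \cite[Proposition VII.3.3]{BH} and exhibits a barrier $\psi(x)=c\left(r^{-\sigma}-|x-y_0|^{-\sigma}\right)$, extended globally, that is superharmonic for the \emph{full} operator $A=L+S$ once $c,\sigma$ are large. Your reduction (Blumenthal, $\tau_D\le\sigma$, the geometric inclusion of $\{|X_{t\wedge\tau}-\hat x|\le R\sqrt t\}\cap\{\langle X_{t\wedge\tau}-\hat x,n\rangle<-\varepsilon\sqrt t\}$ in $\{X_{t\wedge\tau}\in B(z,\rho)\}$ for small $t$) and the semimartingale decomposition with the stated moment bounds are fine.

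The gap is exactly where you locate it, and it is not routine: the strategy you outline cannot close it. You propose to discard the jump contribution ``off an event of probability $\le C(N_*+M_A)/R^2$ by Chebyshev'' and to retain the event $\{Y^c_t\le-(\varepsilon+R)\sqrt t\}$. But $\E_{\hat x}|M^d_{t\wedge\tau}|^2$ is of order $N_*t$ --- the \emph{same} diffusive order as $\langle Y^c\rangle_t$ --- so the exceptional probability decays only polynomially in $R$, while any lower bound on $P_{\hat x}\big(Y^c_t\le-(\varepsilon+R)\sqrt t\big)$ compatible with $\langle Y^c\rangle_t\le\|{\bf Q}\|_\infty\,t$ is at most of order $e^{-cR^2}$. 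Hence no choice of $R$ makes the good event more probable than the bad one unless $N_*$ is small relative to the ellipticity constant; even a Bernstein-type bound for the compensated small jumps only yields $e^{-cR^2/N_*}$, which still loses for large $N_*$. Orthogonality $[Y^c,M^d]=0$ does not rescue this: the compensator $N(X_s,dy)\,ds$ depends on the whole trajectory, so the jump martingale may be adversarially correlated with the continuous one, and no conditional-independence or time-change argument is available from A1)--A2) alone. Your parenthetical fallback does not repair the gap either: the exponential function you write down is a supersolution only for the local part $L$, and regularity does not ``transfer'' from the $L$-diffusion to the $A$-process; one must verify $-A\psi\ge 0$ \emph{including} the nonlocal term $S\psi$, which is precisely what the paper's global power-type barrier with large $\sigma$ is designed to achieve. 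To salvage a probabilistic proof you would in effect apply the martingale problem to such an $A$-superharmonic barrier --- which collapses back to the paper's argument.
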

\begin{proof}
 By \cite[Proposition VII.3.3]{BH}, to prove that $\hat x\in\partial
 D$ is Dirichlet regular  it is enough to  find a  function
 $\psi$ - the so called {\em barrier} - that is strictly positive, and   superharmonic on $D$, i.e. $-A\psi\ge 0$ on $D$,
such that 
\[
\mathop{\lim_{x\to \hat x}}_{x\in D}\psi(x)=0.
\]
 For $\hat x\in\partial D$ let $B(y_0,r)$ be the
exterior ball to $D$ at this point. {Suppose that  $M$ is so large that $D\subset B(y_0,M)$.  Define a compactly supported,
smooth function $\psi:\bbR^d\to\bbR$, such that 
$$
\psi(x;\hat x)=c\left(\frac{1}{r^\si}-\frac{1}{|x-y_0|^\si}\right)
\quad \mbox{for }\frac{r}{10}<|x-y_0|<M
$$
for some  $c,\si>0$.}
It is  elementary to check that $\psi$ fulfills the requirements for a
barrier, provided $c,\sigma>0$ are sufficiently large.
\end{proof}


\section{Some auxiliary results}

\label{sec8}

\subsection{An auxiliary function and its lower bound}
Given $\bar y\in\R^d$ and {$\beta>0$} define 
{\begin{equation}
\label{eta-al}
 \eta(x; \beta,\bar y,r)=e^{-\beta|x-\bar y|^2}-e^{-\beta r^2},\quad x\in\BR^d.
\end{equation}}
We shall use this function in the proofs of the strong maximum
principle and both the Hopf, and
quantitative Hopf lemmas expressed in Theorems \ref{strong-max} --
\ref{thm:main1}. For this purpose we shall prove various estimates of 
{$\eta(x; \beta,\bar y,r)$} that are formulated in the present section as
well as in Section \ref{sec8.1.4}.

Define   open annuli
\begin{equation}
\label{o-a}
V_*(\bar y;r)=B(\bar y,r)\setminus \bar B(\bar y,r/2),\qquad V^*(\bar y;r)=B(\bar y,3r/2)\setminus \bar B(\bar y,r/2).
\end{equation}
{For given $y\in\BR^d$ we let $y^2:= y\otimes y=[y_iy_j]_{i,j=1}^{d}$}.
We start with the 
following result.
\begin{lemma}
\label{lm010204-20a} 
Suppose that $D$ {is bounded}
satisfies the interior ball condition with $\frak r(\cdot)$ as in
Definition \ref{def2.3}. 
Then, for any   $K>0$ there exist  constants $r_0,C>0$ depending only
on ${\ga_{\bar {\cal O}_D}}, \|N\|_\infty,\|Q\|_\infty, \|b\|_\infty, {\|c\|_\infty}$  such that for every $\hat x\in\partial D$,  any interior ball 
$B(\bar y,r)$ at $\hat x$ (cf Definition \ref{def2.3}) with $r\in
(0,r_0\wedge \frak r(\hat x)]$, we have
\begin{equation}
\label{eta-1}
(A-c)\eta\left(x;Cr^{-2},\bar y,r\right)\ge K,\quad x\in  V^*(\bar y;r).
\end{equation}
\end{lemma}
\proof
Let $0<r\le 1$,  $B(\bar y,r)$ be an interior ball in $D$ at $\hat x$,
and   $
V^*:=V^*(\bar y;r)
$ (cf \eqref{o-a}), see
 Fig. \ref{fig1}. 
Let 
$
 \eta(x):=\eta(x;\beta,\bar y,r)$, $x\in\R^d$ be given by
 \eqref{eta-al}. 
Then,
\begin{align}
\label{011004-20}
&\nabla\eta(x)=-2\beta (x-\bar y)e^{-\beta|x-\bar y|^2},\\
&\nabla^2\eta(x)=e^{-\beta|x-\bar y|^2} \left(4\beta^2 (x-\bar y)^2-2\beta I_d\right).\notag
\end{align}
With the notation ${\rm Tr}({\bf Q}(x))$ for the trace of ${\bf
  Q}(x)$, we can write
\[
\sum_{i,j=1}^d\frac12
q_{i,j}(x)\eta_{x_ix_j}(x)=\beta e^{-\beta|x-\bar y|^2}
\left[\vphantom{\int_0^1}2 \beta (x-\bar y)^T\cdot {\bf Q}(x) (x-\bar y)-{\rm Tr}({\bf Q}(x))\right]
\]
and
\[
\sum_{i=1}^d b_i(x)\eta_{x_i}=-2 \beta e^{-\beta|x-\bar y|^2}b^T(x)\cdot (x-\bar y).
\]

\bigskip

\begin{figure}[ht]

\begin{center}
\begin{tikzpicture}[scale=0.8]
 


\node [above]  at (-3.8,0) {$D^\complement$} ;
\node [above]  at (4.2,-1) {$\partial D$} ;

\node [below]  at (-4,-1.5){$D$} ;
 \draw [dotted] (0,-2) circle (3cm);

\fill [black!5,even odd rule] (0,-2) circle[radius=3cm]
circle[radius=2cm];

\fill [black!5,even odd rule]   (0,-2) circle[radius=1cm] circle[radius=0cm];

 \draw [dotted] (0,-2) circle (1cm);

\draw [thick]  (0,0) -- (0,-2);

\draw [decorate,decoration={brace,amplitude=6pt},xshift=-4pt,yshift=0pt]
(-0.0,-2) -- (-0.0,0.0) node [black,midway,xshift=-0.5cm] 
{\footnotesize $r$};

\draw [densely dotted]  (0,-2) -- (-3,-2);

\draw [decorate,decoration={brace,amplitude=10pt},xshift=-4pt,yshift=0pt]
 (0.1,-2) -- (-2.9,-2) node [below,black,midway,yshift=-0.2cm]   {\footnotesize $3r/2$};


\draw [thick]  (0,-2)--(0.72,-2.72);
\draw [decorate,decoration={brace,amplitude=4pt}]
 (0,-2)--(0.72,-2.72) node [above,black,midway, xshift=0.2cm ,yshift=0.1cm]   {\footnotesize $r/2$};


\node [below]  at (-0.25,-3.25){\mbox{\small{ $V^*(\bar y,r)$}}} ;
 \draw [densely dotted] (0,-2) circle (2cm);

  \draw [thick] (-6,-2) [out=45,in=180] to (0,0) [out=0,in=150] to
  (4,-1) [out=330] to (8,0); 

\node [fill, draw, circle, minimum width=3pt, inner sep=0pt,
pin={[fill=white, outer sep=2pt]135:$\hat x$}] at (0,0) {};


\node [fill, draw, circle, minimum width=3pt, inner sep=0pt,
pin={[fill=none, outer sep=-7pt]245:  {$\bar y$}}] at (0,-2) {};


  \end{tikzpicture}
\caption{} 
\label{fig1}
\end{center}
\end{figure}

Using the uniform ellipticity assumption \eqref{la-K}, we get 
\begin{align}
\label{eq2.2.3}
\nonumber L\eta(x)&\ge   \beta e^{-\beta|x-\bar y|^2} \left\{2 \ga_{\bar {\cal O}_D} \beta|x-\bar y|^2
                   -  ({\rm Tr}({\bf Q}(x))-2b^T(x)\cdot
                    (x-\bar y))\right\}\\
&
\ge   \beta  e^{-9 \beta r^2/4} \left\{ \frac{\ga_{\bar {\cal O}_D}\beta r^2}{2} -
                                  \Big(\|{\rm Tr}\,{\bf
  Q}\|_\infty+\frac{3r}{2}\|b\|_\infty \Big)\right\},\quad x\in V^*.
\end{align}
We shall choose $\beta>0$, $r\in(0,1]$ in such a way that
\begin{equation}
\label{021004-20}
\beta r^2=\gamma_*:=\frac{4}{\ga_{\bar {\cal O}_D}}\Big(\|{\rm Tr}\,{\bf
  Q}\|_\infty+\frac{3}{2}\|b\|_\infty \Big).
\end{equation}
Then, remembering that $r\in(0,1]$, we get
\begin{equation}
\label{eq2.2.3b}
L\eta(x)\ge    {\frac{\beta\gamma_*\ga_{\bar {\cal O}_D}}{4 } }e^{-9\gamma_*/4},\quad x\in V^*.
\end{equation}
{Using  \eqref{S}, 
we can write
\begin{align}
\label{eq2.2.4-0}
S\eta=S_1\eta(x)+\int_{\BR^d}\frac{ \delta\eta(x;y)}{1+|y|^2}N(x,dy),
\end{align}
where
\begin{align}
\label{031004-20-a}
&
S_1\eta(x):=\int_{\BR^d}\frac{|y|^2\left(\eta(x+y)-\eta(x)\right)}{1+|y|^2}N(x,dy),\notag\\
&
\delta\eta(x;y):=\eta(x+y)-\eta(x)-\sum_{i=1}^dy_i\partial_{x_i}\eta(x).
\end{align}
Using the fact that
$$
\delta\eta(x;y)=\int_0^1d\theta_1\int_0^{\theta_1}\frac{d^2}{d\theta^2_2}\delta\eta(x;\theta_2y)d\theta_2=\int_0^1d\theta_1\int_0^{\theta_1}y\cdot (\nabla^2\eta)(x;\theta_2y)y\,
d\theta_2 
$$
and subsequently formula \eqref{011004-20} for $\nabla^2\eta$ we 
conclude that
for any $M>10$ 
\begin{align}
\label{eq2.2.4}
{S\eta=\sum_{j=1}^4S_j\eta},
\end{align}
where, cf \eqref{011004-20}, 
\begin{align}
\label{031004-20}
&
S_2\eta(x):=-2\beta\int_0^1d\theta_1
  \int_0^{\theta_1}d\theta_2\int_{|y|\le Mr}
e^{-\beta|x+\theta_2 y-\bar y|^2} \frac{
  |y|^2} {1+|y|^2}N(x,dy),\notag\\
&
S_3\eta(x):=-2\beta\int_0^1d\theta_1
  \int_0^{\theta_1}d\theta_2\int_{|y|> Mr}
e^{-\beta|x+\theta_2 y-\bar y|^2} \frac{
  |y|^2} {1+|y|^2}N(x,dy),\\
&
S_4\eta(x):=4\beta^2\int_0^1d\theta_1 \int_0^{\theta_1}d\theta_2\int_{\BR^d}
e^{-\beta|x+\theta_2 y-\bar y|^2} {\frac{\big[(x+\theta_2 y-\bar y)\cdot
  y\big]^2} {1+|y|^2}}N(x,dy),\notag
\quad
  x\in V^*.
\end{align}}
We have $S_4\eta(x)\ge0$, therefore
\begin{align}
\label{eq2.2.4a}
S\eta(x)\ge \sum_{j=1}^3S_j\eta(x), \quad
  x\in V^*.
\end{align}
Furthermore, since $\|\eta\|_\infty=1$ we get, see \eqref{Nx1},
\begin{equation}
\label{051004-20}
|S_{1}\eta(x)|\le {2N_*},\quad x\in V^*.
\end{equation}
In addition, 
\begin{equation}
\label{061004-20}
|S_{2}\eta(x)|\le \beta n(Mr),\quad x\in V^*,
\end{equation}
where 
$$
 n(r):=\sup_{x\in \bar V^*}N_{B(0,r)}(x).
$$
Using assumption A2) and Dini's uniform convergence theorem we
conclude that 
\begin{equation}
\label{Dini}
\lim_{r\to0+}n(r)=0.
\end{equation}
Finally, we can write
\begin{equation}
\label{071004-20}
|S_3\eta(x)|\le 2\beta{\Theta(x,Mr)},
\end{equation}
where
$$
\Theta(x,r):=\int_0^1d\theta_2
  \int_{\theta_2}^1d\theta_1\int_{|y|> r}
e^{-\beta|x+\theta_2 y-\bar y|^2} \frac{
  |y|^2} {1+|y|^2}N(x,dy).
$$
We have
$$
|x+\theta_2 y-\bar y|\ge \left(\theta_2M-\frac{3}{2}\right)r\ge \frac{Mr}{2},
$$
provided that 
$$
\frac{r}{2}+\frac{3}{2M}=:\theta_*\le \theta_2\le 1.
$$
We can estimate 
\begin{align*}
&
\Theta(x,Mr)\le \int_0^{\theta_*}d\theta_2
  \int_{\theta_2}^1d\theta_1\int_{|y|> Mr}\frac{
  |y|^2} {1+|y|^2}N(x,dy)\\
&
+\int_{\theta_*}^1d\theta_2
  \int_{\theta_2}^1d\theta_1 e^{-\beta(M r)^2/4}\int_{|y|> Mr}
\frac{
  |y|^2} {1+|y|^2}N(x,dy)\\
&
\le \left(\theta_*+ e^{-\beta(M
  r)^2/4}\right) N_*\le \left(\frac{r}{2}+\frac{3}{2M}+ e^{-M^2\ga_*/4}\right) N_*\\
&
= \left(\frac{\sqrt{\ga_*}}{2\sqrt{\beta}}+\frac{3}{2M}+ e^{-M^2
 \ga_*/4}\right) N_*.
\end{align*}
Using estimates \eqref{eq2.2.3b} and \eqref{051004-20} -- \eqref{071004-20}  we get
\begin{align}
\label{A-est}
&
(A-c)\eta(x)\ge{\frac{\beta\gamma_*\ga_{\bar {\cal O}_D}}{4 }} e^{-9\gamma_*/4}-2
N_*-\beta n\left(\frac{M\sqrt{\ga_*}}{\sqrt{\beta}}\right)\\
&
 -
2\left(\frac{\sqrt{\beta\ga_*}}{2}+\frac{3\beta}{2M}+ \beta e^{-M^2
 \ga_*/4}\right) N_*-\|c\|_\infty\notag\\
&
=\beta\left\{{\frac{ \gamma_*\ga_{\bar {\cal O}_D}}{4 }} e^{-9\gamma_*/4}- n\left(\frac{M\sqrt{\ga_*}}{\sqrt{\beta}}\right)-2
\left(\frac{\sqrt{\ga_*}}{2\sqrt{\beta}}+\frac{3}{2M}+ e^{-M^2
 \ga_*/4}\right) N_*\right\}\notag\\
&
-2
N_*-\|c\|_\infty.\notag
\end{align}
Therefore, for  a $\beta_0\gg M^2\gg1$, we get that the right-hand side of the above inequality
is greater than $K$ on $V^*$.  By (\ref{021004-20}) we conclude
\eqref{eta-1} with {$r_0=\sqrt{\ga_*/ \beta_0}$}.\qed



\subsection{A weak subsolution of \eqref{eq3.1}}


\begin{definition}[{Weak subsolution, supersolution and solution of \eqref{eq3.1}}]
\label{df3.1g}
Let ${c,g\in B_b(\BR^d)}$.  A function $u\in  B_b(\BR^d)$ is called a {\em weak subsolution}
to (\ref{eq3.1}) in   $D$
if for any $x\in D$,
\begin{equation}
\label{subsol.g}
u(x)\le \mathbb E_x[e_c(t\wedge\tau_D)u(X_{t\wedge\tau_D})]+\mathbb E_x\left[ \int_0^{t\wedge\tau_D}e_c(s) g(X_s)\,ds\right],\quad t\ge 0.
\end{equation}
{The notions of a supersolution and solution can be then introduced
analogously as in Definition \ref{df3.1}.}
\end{definition}

\begin{remark}
By  Proposition \ref{prop010204-20} if  $u\in C_b(\BR^d)\cap
W^{2,d}_{\rm loc}(D)$  is a subsolution to (\ref{eq3.1}) on $D$, then
$u$ is a  weak subsolution to (\ref{eq3.1}) on $D$.
\end{remark}

{The Feynman-Kac representation of
 solutions, see Proposition \ref{prop010204-20}.
    holds also for weak  subsolutions by their very
definition. In particular, we have the following   form of a weak
maximum principle, that  follows by the same argument as 
Proposition \ref{prop012804-20}, see Section \ref{sec5.1b}.}
\begin{proposition}
\label{prop012904-20}
If $u$  is a weak subsolution of
\eqref{eq3.1a} {on a bounded and open set $D$}, then \eqref{032804-20} is in force.
\end{proposition}

\subsection{Properties  of $w_{c,D}$}
\label{sub.sub12}

Recall that $w_{c,D}(\cdot)$ is given by \eqref{wcD}. 
Obviously $0\le w_{c,D}(x)\le 1,\, x\in D$.
{Define $\hat w_{c,D}:=\mathbf1_{D}w_{c,D}$}, and  a martingale
$$
M_t:=\mathbb E_x\Big[e_c(\tau_D)|\FF_t\Big],\quad t\ge0.
$$where $e_c(t)$ is defined in \eqref{ecD}. 
Suppose that $x\in D$. {By the Markov property we have
$$
M_t=e_c(\tau_D)1_{[\tau_D\le t]}+v_{c,D}(X_t) e_c(t)1_{[\tau_D> t]},
$$
cf \eqref{eq2.pvf}.} Hence,
\begin{equation*}
 1_{[\tau_D> t\wedge
  \tau_D]} w_{c,D}(X_{t\wedge \tau_D})=1-e_c^{-1}(t\wedge \tau_D) M_{t\wedge
  \tau_D},\quad t\ge0. 
\end{equation*}
Since $\hat w_{c,D}(X_{t\wedge \tau_D})=1_{[\tau_D> t\wedge
  \tau_D]}w_{c,D}(X_{t\wedge \tau_D})$, we have
\begin{equation}
\label{wcD1}
{\hat w_{c,D}(X_{t\wedge \tau_D})}=1-e_c^{-1}(t\wedge \tau_D) M_{t\wedge
  \tau_D},\quad t\ge0. 
\end{equation}

Applying It\^o's formula to $e_c^{-1}(t) M_t $ yields
\begin{equation}
\label{011007-20}
e_c^{-1}(t) M_t =M_0+\int_0^t e_c^{-1}(s)\, dM_s+\int_0^t e_c^{-1}(s) M_s c(X_s)\,ds.
\end{equation}
Let $N_t$ be the second term in the right-hand side of the above
equality. It is a local martingale. 
{Thus, substituting from \eqref{011007-20} into \eqref{wcD1} and then
replacing $t$ by 
$t\wedge\tau_V$, where  $V\subset D$ is open, we get
\begin{align}
\label{eq.hatw123}
\nonumber
 {\hat w_{c,D}(X_{t\wedge\tau_V})}&=1-M_0-N_{t\wedge\tau_V}-\int_0^{t\wedge\tau_V} M_s e_c^{-1}(s) c(X_s)\,ds\\&
=w_{c,D}(X_0)-N_{t\wedge\tau_V}-\int_0^{t\wedge\tau_V} \big(cv_{c,D}\big)(X_s)
                                                                                           \,ds,\quad t\ge 0.
\end{align}
Therefore, $\hat w_{c,D}$ is a weak solution to $-Af=cv_{c,D}$ in
  $V$. Applying the It\^o formula to $\hat w_{c,D}(X_{t\wedge
    \tau_V})e_c(t\wedge \tau_V)$  we conclude also that $\hat w_{c,D}$
  is also a weak
  solution to
\begin{equation}
\label{eq.wcd0}
-Af+c f=c,\quad \mbox{in any open }\,\, V\subset D.
\end{equation}}


{From \eqref{eq.hatw123}}, we also get that 
\begin{equation}
\label{eq.wcd1}
{ w_{c,D}(x)}= R^D(c v_{c,D})(x)=R^D(c-c w_{c,D} )(x),\quad x\in D.
\end{equation}
By the very definition of $w_{c,D}$ we have
\[
w_{c,D}(x)>0\,\quad\mbox{iff}\quad \,{P_x\Big(\int_0^{\tau_D}c(X_r)\,dr>0\Big)>0},
\]
which in turn is equivalent to $R^Dc(x)>0$.
The latter however is a direct consequence of
\eqref{eq.uewo.st}. 
Thanks to \eqref{eq.uewo.sf} and \eqref{eq.wcd1} we conclude also
that $w_{c,D}$ is continuous in $D$. 
Summarizing we have shown the following.
\begin{lemma}
\label{lm010809-20}
The function $w_{c,D}$ is strictly positive and continuous on $D$.
\end{lemma}

\bigskip

The following result shall be used in the proof of estimate
\eqref{eta-wz} formulated below.
\begin{lemma}
\label{lm010704-20y}
Suppose that  A3'') holds. Then for any open set
 $V$, compactly embedded in $D$, there exists $a_*>0$ depending only on
$\|N\|_\infty,\|Q\|_\infty, \|b\|_\infty$,
 $\underbar{c}>0$  and $r_*:={\rm dist}(V,D^c)>0$ such that
 \begin{equation}
\label{040704-20}
 \inf _{x\in\bar V} w_{c,D}(x)\ge a_*.
 \end{equation}
 \end{lemma}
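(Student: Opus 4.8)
The plan is to reduce the statement to a uniform lower bound on a survival probability of the canonical process and then obtain that bound from a single barrier function common to all starting points. By \eqref{wcD}, \eqref{eq2.pvf} and \eqref{ecD} we have $w_{c,D}(x)=\E_x\big[1-e_c(\tau_D)\big]$, and since $c\ge\underline c>0$ under A3'') we get $e_c(\tau_D)\le e^{-\underline c\,\tau_D}$ (both sides interpreted as $0$ when $\tau_D=+\infty$). Hence, for every $t>0$,
\begin{equation*}
w_{c,D}(x)\ \ge\ \E_x\big[1-e^{-\underline c\,\tau_D}\big]\ \ge\ \big(1-e^{-\underline c t}\big)\,P_x(\tau_D>t),\qquad x\in D,
\end{equation*}
so it suffices to produce $t_0>0$ and $p_0\in(0,1]$, depending only on the parameters allowed in the statement, with $P_x(\tau_D>t_0)\ge p_0$ for all $x\in\bar V$; one then takes $a_*:=(1-e^{-\underline c t_0})p_0$. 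Since $\bar V\subset D$ is compact, $\mathrm{dist}(x,D^c)\ge r_*$ for all $x\in\bar V$, so $B(x,r_*)\subset D$ and $\tau_D\ge\tau_{B(x,r_*)}$; it is therefore enough to bound $P_x(\tau_{B(x,r_*)}>t_0)$ from below, uniformly in $x\in\bar V$.

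For the survival estimate I would use Dynkin's formula with a radial barrier. Fix $x\in\bar V$, set $r:=r_*$, and take $f=f_x\in C^2_b(\BR^d)$ of the form $f_x(y)=\phi\big(|y-x|^2\big)$, where $\phi\in C^2([0,+\infty))$ is nondecreasing, $\phi(0)=0$, $0\le\phi\le r^2$ and $\phi(s)=r^2$ for $s\ge r^2$. Then $f_x(x)=0$, $f_x\equiv r^2$ on $B(x,r)^c$, and $\|f_x\|_\infty,\|\nabla f_x\|_\infty,\|\nabla^2 f_x\|_\infty$ are bounded by a constant depending only on $r$ and $d$, uniformly in $x$, because the $f_x$ are translates of one fixed profile. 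The essential point is that $\|Af_x\|_\infty\le K$ with $K$ depending only on $r$, $d$, $\|Q\|_\infty$, $\|b\|_\infty$ and $N_*$: the differential part is bounded by $\tfrac12\|Q\|_\infty\|\nabla^2 f_x\|_\infty+\|b\|_\infty\|\nabla f_x\|_\infty$, while $Sf_x$ is handled by splitting the integral at $|z|=1$, a second-order Taylor estimate together with \eqref{Nx1} controlling the $\{|z|\le1\}$ part, and the bounds on $\|f_x\|_\infty$, $\|\nabla f_x\|_\infty$ together with $\int_{|z|>1}N(y,dz)\le N_*$ controlling the $\{|z|>1\}$ part. Since $f_x\in C^2_b(\BR^d)$, the process $M_t[f_x]$ from \eqref{Mtf} is a $P_x$-martingale, so optional stopping at the bounded stopping time $t\wedge\tau_{B(x,r)}$, together with $f_x(x)=0$, gives
\begin{equation*}
\E_x\big[f_x(X_{t\wedge\tau_{B(x,r)}})\big]=\E_x\Big[\int_0^{t\wedge\tau_{B(x,r)}}Af_x(X_s)\,ds\Big]\le Kt,\qquad t\ge0.
\end{equation*}
On $\{\tau_{B(x,r)}\le t\}$ one has $X_{\tau_{B(x,r)}}\in B(x,r)^c$ (the exit from an open set lands in its complement), so $f_x(X_{\tau_{B(x,r)}})=r^2$; hence $r^2\,P_x(\tau_{B(x,r)}\le t)\le Kt$, that is $P_x(\tau_{B(x,r)}>t)\ge 1-Kt/r^2$. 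Choosing $t_0:=r_*^2/(2K)$ yields $P_x(\tau_{B(x,r_*)}>t_0)\ge\tfrac12$ for every $x\in\bar V$.

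Combining the two reductions gives $w_{c,D}(x)\ge\tfrac12\big(1-e^{-\underline c\, r_*^2/(2K)}\big)=:a_*$ for all $x\in\bar V$, and $a_*$ depends only on $\underline c$, $r_*$, $\|Q\|_\infty$, $\|b\|_\infty$ and $N_*$, as required. I expect the one delicate point to be the uniform (in $x$) bound $\|Af_x\|_\infty\le K$ for the nonlocal term $Sf_x$: it works precisely because all the barriers $f_x$ are translates of a single profile that is eventually constant, so every sup-norm entering the estimate is $x$-independent, and only the $\min\{1,|z|^2\}$-integrability \eqref{Nx1} of the L\'evy kernel is used.
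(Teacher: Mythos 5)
Your proof is correct and follows essentially the same route as the paper: reduce \eqref{040704-20} to a uniform lower bound on $P_x(\tau_{B(x,r_*)}>t)$ via $w_{c,D}(x)\ge(1-e^{-\underline c t})P_x(\tau_{B(x,r_*)}>t)$, then obtain that bound from the martingale property \eqref{Mtf} applied to a smooth radial barrier whose translates give $x$-independent bounds on $\|Af_x\|_\infty$. The only cosmetic difference is the choice of barrier (the paper uses the Gaussian profile $e^{-|y-\bar x|^2/r^2}$ rather than a truncated one), which changes nothing in the argument.
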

 \begin{proof}
 Let $\bar x\in D$ and $r>0$ be such that $B(\bar x,r)\subset
 D$. Observe that for any $t>0$,
 \[
 w_{c,D}(\bar x)\ge 1-\E_{\bar x}e^{- \underline c \tau_D}\ge 1-\E_{\bar x}e^{-\underline c \tau_{B(\bar x,r)}}\ge (1-e^{-\underline c t})P_{\bar x}(\tau_{B(\bar x,r)}>t).
 \]
 Let $\rho_{\bar x,r}\in C^2_b(\BR^d)$ be an arbitrary function taking
 values in $[0,1]$ and satisfying
 \[
 \rho_{\bar x,r}(\bar x)=1,\quad 1- \rho_{\bar x,r}(x)\ge a>0,\,\,x\in B^c(\bar x,r).
 \] 
 Since the process
 \[
 M_t:= 1- \rho_{\bar x,r}(X_{t\wedge \tau_{B(\bar x,r)}})+\int_0^{t\wedge \tau_{B(\bar x,r)}} A \rho_{\bar x,r}(X_s)\,ds,\quad t\ge 0
 \] 
 is a zero mean martingale we can write
 \begin{align}
\label{extra}
 P_{\bar x}(\tau_{B(\bar x,r)}\le t)&\le \frac1a\E_{\bar x}\left[(1- \rho_{\bar x,r})(X_{t\wedge \tau_{B(\bar x,r)}})\right]=-\frac1a \E_{\bar x}\left[\int_0^{t\wedge \tau_{B(\bar x,r)}} A \rho_{\bar x,r}(X_s)\,ds\right]\notag\\&
 \le \frac 1a E_{\bar x}(t\wedge \tau_{B(\bar x,r)}) \sup_{B(\bar x,r)} (A \rho_{\bar x,r})^- \le \frac ta\sup_{B(\bar x,r)} (A \rho_{\bar x,r})^-
\\& \le \frac ta C_A(\|\rho_{\bar x,r}\|_\infty+\|D^2 \rho_{\bar x,r}\|_\infty) \notag
 \end{align}
for some constant $C_A$ depending on the coefficients of the operator
$A$.

 Taking $r>0$ sufficiently small, so that $V\Subset D_r$ and letting $\rho_{\bar
  x,r}(x)=e^{-|x-\bar x|^2/r^2}$ for any $\bar x\in V$, we can choose
$t$ sufficiently small so that 
\eqref{040704-20} is in force.
 \end{proof}

\bigskip

\subsection{Some upper bounds on $\eta(x;\beta,\bar y,r)$}

\label{sec8.1.4}
\bigskip

In the present section we prove  some upper bounds on $\eta(x;\beta,\bar
y,r)$ formulated with the help of  function $w_{c,D}(x)$. They are
instrumental    in proving the quantitative Hopf lemmas expressed
in  Theorems  \ref{thm:main1a} and \ref{thm:main1}.

Recall that $D_{r}=[x\in D: {\rm dist}(x,\partial D)>r]$ for $r>0$ ,
cf \eqref{Da}. Let
$
r_*:=\inf[r>0:\,D_{r}=\emptyset].
$  
Define 
\begin{equation}
\label{eq.uewo.dism}
\rho_{c,A,D}(r):= \inf_{x\in D_{r}} w_{c,D}(x),\quad r\in(0,r_*).
\end{equation}
 We adopt the convention that $\rho_{c,A,D} (r):=\lim_{\xi\to r_*^-}\rho_{c,A,D}(\xi)$, if
$r\ge r_*$. Clearly, $\rho_{c,A,D}$ is nondecreasing and takes values in
$(0,1]$. Since   $w_{c,D}$ is a strictly  positive and continuous
 on $D$, see Lemma \ref{lm010809-20}, we conclude that $\rho_{c,A,D}(\cdot)$ is also strictly positive
 and  
 continuous on $(0,+\infty)$.
We start with the following.
\begin{lemma}
\label{lm010204-20}
Suppose that assumption A3') is fulfilled
and $D$ {is bounded} and 
satisfies the interior ball condition.  Let $r_0,C$ be constants as in
Lemma \ref{lm010204-20a} stated for  $K=1$.
Then, there exists a nondecreasing, strictly positive continuous function
$\rho_{c,A,D}:(0,+\infty)\to(0,1]$ depending only on
$c$,  of operator $A$ and the domain $D$ such that for any  $\hat x\in\partial D$ and    interior ball 
$B(\bar y,r)$ at $\hat x$ (cf Definition \ref{def2.3}) with $r\in
(0,r_0\wedge \frak r(\hat x)]$, we have
\begin{equation}
\label{eta-w}
\eta(x;Cr^{-2},\bar y,r)\rho_{c,A,D}(r/2)\le w_{c,D}(x),\quad x\in \bar V_*(\bar y;r).
\end{equation}
\end{lemma}

\proof
By Lemma \ref{lm010204-20a}, for given $\hat x
\in \partial D$, and any   interior ball 
$B(\bar y,r)$ at $\hat x$ (cf Definition \ref{def2.3}) with $r\in (0,\frak r(\hat x)\wedge r_0]$, function
$\eta(\cdot):=\eta(\cdot;Cr^{-2},\bar y,r)$ satisfies \eqref{eta-1}.  
This in particular means that  $g:=-A\eta<0$ in $\bar V_*(\bar y;r)$.
For any $\varepsilon\in(0,1]$ we let $\eta_\varepsilon:=
\varepsilon\eta$. Clearly, $\eta_\varepsilon\le 1$ (as $\eta\le1$) and 
\[
-A\eta_\varepsilon+c \eta_\varepsilon=c\eta_\varepsilon +\varepsilon
g,\quad \mbox{in}\,\,  { V_*(\bar y;r)}.
\]
{By \eqref{eq.wcd0}, $\hat w_{c,D}$ is a weak solution to 
\[
-Au+cu=c,\quad \mbox{in}\,\,  {V_*(\bar y;r)}.
\]
}
Therefore, {$\eta_{\varepsilon}-\hat w_{c,D}$} is a weak subsolution of
\eqref{eq3.1a} on $ { V_*(\bar y;r)}$.
Since $\eta\le1$ we have
\begin{equation}
\label{wcdeta}
\eta_{\varepsilon}(x)\le  {\hat w_{c,D}(x)},\, \quad x\in\bar B(\bar y,r/2),
\end{equation}
with  (cf
\eqref{eq.uewo.dism})
\begin{equation}
\label{varep}
\varepsilon:= \inf_{D_{r/2}}w_{c,D}=\rho_{c,A,D}(r/2).
\end{equation}
We also have  $\eta_\varepsilon\le 0\le  {\hat w_{c,D}}$ on $\overline B^c(\bar
 y,r)$, therefore by Proposition
 \ref{prop012904-20} we have  
\begin{equation}
\label{012402-21}
\eta_\varepsilon(x) - {\hat w_{c,D}(x)}\le \sup_{y\in  V_*^c(\bar y;r)}[\eta_\varepsilon(y)-
 {\hat w_{c,D}(y)}]\le 0,\quad x\in  {V_*(\bar y;r)}
\end{equation}
and the conclusion of the lemma follows.
 \qed

\begin{lemma}
\label{lm010204-20z}
Suppose that assumption A3'') is fulfilled
and $D$ {is bounded} and 
satisfies the interior ball condition.  Then, for any  $\hat x\in\partial D$ and  any interior ball 
$B(\bar y,r)$ at $\hat x$ there exist $C>0$, depending only on
$\ga_{\bar {\cal O}_D}, \|N\|_\infty,\|Q\|_\infty, \|b\|_\infty$,
and $a>0$ depending on the above parameters, {radius $r$} and $\underbar{c}>0$ such that
\begin{equation}
\label{eta-wz}
a \eta(x; Cr^{-2},\bar y,r)\le w_{c,D}(x),\quad x\in \bar V_*(\bar y;r).
\end{equation}
\end{lemma}
\proof
Suppose that
$r>0$ is as in the proof of Lemma \ref{lm010204-20}. Thanks to Lemma
\ref{lm010704-20y} we can choose $a_*>0$, depending   on the same
parameters as in the statement of the lemma,  such that
$\varepsilon\ge a_*$, where $\varepsilon$ is defined in
\eqref{varep}. {This allows us to conclude \eqref{eta-wz} from
\eqref{012402-21}. Recall that
$\eta_\eps=\eps\eta$ in \eqref{wcdeta}.}  \qed

\section{Proofs of the main results}

\label{sec5}

\subsection{Proof of Theorem \ref{strong-max}}

\label{sec4.2}

\subsubsection*{Proof of part 1): the strong maximum principle}
We start with the argument for the strong maximum principle. Recall that $u\in
W^{2,p}_{\rm loc}(D)\cap C_b(\bbR^d)$. 
The proof  is carried out
 by a contradiction. {We may assume that $D$ is bounded, since
 otherwise we would consider an intersection of $D$ with an open
 ball of a sufficiently large radius, that includes a maximal point.} Suppose that there exist
$P,Q\in D$ such that $u(P)=M\ge0$ and $u(Q)<M$. In fact, see the proof of
Theorem 3.5, p. 61 of \cite{weinberger}, we can assume that there
exists a ball $B(\bar y,3r/2)\subset D$ such that $u(\hat x)=M$ for some $\hat
x\in \partial B(\bar y,r)$ and $u(x)<M$ for all $x\in B(\bar y,r)$. By
virtue of Lemma \ref{lm010204-20a} we can assume that  there exists
also a function $\eta(\cdot;\beta,\bar y,r)$  such that
\eqref{eta-1} holds on  
 $V^*:=V^*(\bar y;r)$, cf \eqref{o-a}.
We can choose $\eps>0$ sufficiently small so that
\begin{equation}
\label{v}
v(x):=u(x)+\eps \eta(x;\beta,\bar y,r)< M\quad\mbox{ for }x\in  \bar
B(\bar y,r/2).
\end{equation}
We also have $v(x)\le M$ on $\bar B^c(\bar y,r)$ (since then  $\eta(x;\beta,\bar y,r)<0$).
Since both $u(\cdot)$ and $\eta(\cdot)$ are
subsolutions to  \eqref{eq3.1a} on $V^* $ so is
$v(\cdot)$.
Let
$\tilde v(x):=v(x)-M$. It is a
subsolution to \eqref{eq3.1a} on $V^* $. Therefore by Proposition
\ref{prop012804-20} we have
$$
\sup_{x\in V^*} \tilde v(x)\le \sup_{x\in (V^*)^c} \tilde v(x)<0,
$$
which leads to a contradiction, as $0=\tilde v(\hat x)$ at $\hat x\in
V^* $.

\subsection*{Proof of part 2): the Hopf lemma}
Concerning the proof of \eqref{010104-20a} 
we essentially follow the classical proof of the Hopf lemma for
a subsolution
of a uniformly elliptic equation, as presented in
e.g. \cite{weinberger}, see Theorem 3.7, p. 65. Since $D$ satisfies
the interior ball condition, thanks to  Lemma \ref{lm010204-20a},
we can find an interior ball $B(\bar y;r)$ at $\hat x$ and $\beta>0$ such that
$$
(A-c)\eta(x;\beta,\bar y,r)>0,\quad 
x\in \bar V_*,
$$
where $ V_*:= V_*(\bar y;r)$.
Thanks to the already proved strong maximum principle for $u$ and our
assumption that $u\not\equiv M$ on $D$,
we know that $u(x)<M$ for $x\in B(\bar y;r/2)$.   {Since  $\eta\le 1$
  on $B(\bar
  y;r/2)\subset D_{r/2}$, see \eqref{Da},  we have 
\begin{equation}
\label{011504-20}
v(x):=u(x)+\eps \eta(x;\beta,\bar y,r)<M\quad\mbox{ for  }x\in  B(\bar
y;r/2).
\end{equation} with
\begin{equation}
\label{eq.qhler4}
\varepsilon:=  \inf_{y\in D_{r/2}}(M-u(y)).
\end{equation}
We also have
$v(x)<M$ for $x\in \bar B^c(\bar y,r)$ (as then $\eta\le0$).  
Therefore, by Proposition \ref{prop012804-20} applied to $v$ on  $\bar V_*$, we conclude that
$$
v(x)\le M=u(\hat x)=v(\hat x),\quad x\in \bar V_*,
$$
which yields $\partial_{{\bf n}} v(\hat x)\ge0$. Using
\eqref{011504-20} we get 
$$
\underline{\partial}_{{\bf n}} u(\hat x)\ge 2\beta\eps re^{-\beta r^2} >0.
$$
By the construction of $\eta(\cdot; \beta,\bar y,r)$, the parameters $\beta,r$ depend only on
${\gamma_{\bar{\mathcal O}_D}}, M_A$. This ends the proof of Theorem \ref{strong-max}.\qed

\begin{remark}  {
 We follow the notation of the proof of Theorem
 \ref{strong-max}.  From the proof of part 2) of the
 theorem we conclude that
\begin{equation}
\label{eq.qhlr857}
u(\hat x)-u(x)\ge   \eta(x) \inf_{y\in D_{r/2}}\big(M-u(y)\big),\quad x\in \overline{ V}_*,
\end{equation}
where, for simplicity, we let $\eta(x):=\eta(x; \beta,\bar y,r)$.

{ Suppose that $r_0>0$ satisfies
\begin{equation}
\label{br0}
2 \beta r_0^2<1.
\end{equation} Recalling the definition of $\eta(\cdot)$ (see
\eqref{eta-al}) we infer from \eqref{eq.qhlr857} that
for any $r\in (0,r_0]$:
\begin{equation*}
\begin{split}
&
\frac{u(\hat x)-u(\hat x-tn(\hat x,\bar y))}{t}\ge \frac{1}{t}\Big[\eta\big(\hat x-tn(\hat x,\bar y)\big)- \eta(\hat x)\Big]\inf_{y\in
  D_{r/2}}\big(M-u(y)\big)\\
&
\ge \underline{\partial}_{{\bf
    n}} \eta(x)_{|x=\hat x-tn(\hat x,\bar y)}\inf_{y\in
  D_{r/2}}\big(M-u(y)\big)\ge  \beta r
e^{-\beta r^2/4} \inf_{y\in D_{r/2}}\big(M-u(y)\big),\quad t\in {(0, r/2]},
\end{split}
\end{equation*}
The last inequality follows from the concavity of $t\mapsto\eta\big(\hat
x-tn(\hat x,\bar y)\big)$ for $t\in[0,r/2]$, due to  \eqref{br0} (see
\eqref{011004-20}). Summarizing, we have shown that
\begin{equation}
\label{eq.qhlr857i}
\frac{u(\hat x)-u(\hat x-tn(\hat x,\bar y))}{t}\ge  \beta r
e^{-\beta r^2/4} \inf_{y\in D_{r/2}}\big(M-u(y)\big),\quad t\in {(0, r/2]},
\end{equation}
where $n(\hat x,\bar y):=(\hat x-\bar y)/|\hat x-\bar y|$.}}
\end{remark}



{The argument leading to \eqref{eq.qhlr857} can be also used to obtain
a lower bound on the resolvent operator applied to a non-negative
function and we can formulate the following result.}
\begin{proposition}
\label{prop010310-20}
Assume that a  {bounded}   domain $D$ satisfies  the  uniform interior  ball condition. 
Suppose that  {non-trivial} $f\in B^+(D)$  is such that $R^D_\al f\in C_0(\bar
D)$ for some $\al\ge0$. Then, there exists $a>0$ such that 
\begin{equation}
\label{030310-20}
R^D_{\al}f(x)\ge a\delta_D(x),\quad x\in D.
\end{equation}
\end{proposition}
\proof We use the notation from the proof of Theorem \ref{strong-max}.
It can be easily seen, by an application of the It\^o formula, that $u=-R^D_{\al}f(x)$ is a weak subsolution of  $(A-\al)v=0$ on $D$.
By virtue of  \eqref{eq.uewo.st} we
have $u(x)<0$, $x\in D$. We can invoke an analogue of
\eqref{eq.qhlr857} that also holds for weak subsolutions and conclude
that
  there exists $a'>0$ such that for any $\hat x\in \partial D$
 \begin{equation}
 \label{eq.qhlr857a}
\begin{split}
 &{R^D_{\al}f (\hat x-tn(\hat x,\bar y))}={-u(\hat x-tn(\hat x,\bar
   y))}\ge  a' t\inf_{y\in D_{r/2}}\big(-u(y)\big)\\
&
=a' t\inf_{y\in
   D_{r/2}}R^D_{\al}f(y),\quad t\in (0, 1],
\end{split}
 \end{equation}
where $n(\hat x,\bar y):=(\hat x-\bar y)/|\hat x-\bar y|$. This, in
turn, implies \eqref{030310-20}.
\qed


\subsection{Proof of Theorem \ref{thm:main1}}

\label{sec6.3}



Suppose that $\hat x$ is as in the statement of Theorem
\ref{thm:main1} and $u$ is a subsolution of  \eqref{eq3.1a}. 
The conclusion of the theorem is obvious if $u(\hat x)<0$ and follows directly from
\eqref{010104-20a}, if $u(\hat x)=0$.
Assume therefore that $u(\hat x)>0$.
{Using the Feynman-Kac formula, see Proposition \ref{prop010204-20},
we infer that for any $x\in D$
\begin{equation}
\label{020204-20}
u(\hat x)-u(x)\ge 
u(\hat x)-\E_x\left[e_c(\tau_D)u(X_{\tau_D})\right]\ge u(\hat x)\left(1-v_{c,D}(x)\right)= u(\hat x)w_{c,D}(x).
\end{equation} 
Invoking the lower bound on $w_{c,D}(x)$, contained in   Lemma
\ref{lm010204-20}, we conclude \eqref{eta-w}  for any interior ball $B(\bar y,r)$ for $D$ at $\hat x$, with $r\in (0,r_0\wedge \frak r(\hat x)]$.}
Therefore, for ${\bf n}$ that is the outward unit normal to $\partial B(\bar y,r)$
at $\hat x$, we have
\begin{equation}
\label{020204-20a}
\underline{\partial}_{{\bf n}} u(\hat x)\ge -\rho_{c,A,D}(r/2)u(\hat x)\partial_{{\bf n}}
\exp\left\{-\frac{C}{r^2}|x-\bar y|^2\right\}\vphantom{1}_{| x=\hat x}=\frac{2C}{r} e^{-C} \rho_{c,A,D}(r/2) u(\hat x).
\end{equation}
The conclusion of Theorem \ref{thm:main1}  then follows.

\qed




\subsection{Proof of Theorem \ref{thm:main1a}}

\label{sec6.3a}

We start with \eqref{020204-20}.
Invoking  the lower bound on $w_{c,D}(x)$, provided in  Lemma \ref{lm010204-20z}, we can find $B(\bar y,r)$ -  an
interior ball for $D$ at $\hat x$ - and
$C,a>0$ as in the statement of the lemma such that  \eqref{eta-wz} holds.
Therefore, for ${\bf n}$ that is the outward unit normal to $\partial B(\bar y,r)$
at $\hat x$, we have
\begin{equation}
\label{020204-20ax}
\underline{\partial}_{{\bf n}} u(\hat x)\ge -au(\hat x)\partial_{{\bf n}}
\exp\left\{-\frac{C}{r^2}|x-\bar y|^2\right\}\vphantom{1}_{| x=\hat x}=2\frac{Ca}{r^2} re^{-C} u(\hat x).
\end{equation}
The conclusion of Theorem \ref{thm:main1a}  then follows.






\subsection{Proof of Theorem \ref{pee}}

\label{sec7.2.1}

Using the Feynman-Kac formula  (Proposition \ref{prop010204-20}) we can write
\begin{equation}
\label{052904-20b}
u(\hat x)-u(x)\ge \big(1-\E_xe_c(\tau_D\wedge t)\big)u(\hat x)+\mathbb
E_x\left[\int_0^{\tau_D\wedge t}e_c(s)(Au-cu)(X_s)\,ds\right],\quad t>0.
\end{equation}
Observe that 
\begin{equation}
\label{052904-20}
\Big(1-\E_xe_c(\tau_D\wedge t)\Big)u(\hat x)\ge (1-e^{-\underline c t})P_x(\tau_D>t) u(\hat x).
\end{equation}
From \eqref{eigen} we have
\begin{equation}
\label{062904-20}
e^{-\lambda_D t}\varphi_D(x)=\E_x\left[\varphi_D(X_t)\mathbf1_{\{t<\tau_D\}}\right]\le \|\varphi_D\|_\infty P_x(\tau_D>t).
\end{equation}
Substituting into \eqref{052904-20} the lower bound on
$P_x(\tau_D>t)$ obtained from \eqref{062904-20}
and maximizing over $t>0$ we conclude that
\begin{equation}
\label{072904-20}
u(\hat x)-u(x)\ge \frac{u(\hat x)
  \varphi_D(x)}{\|\varphi_D\|_\infty}\cdot \frac{\underline
  c /\la_D}{(1+\underline c/\la_D)^{1 +\la_D/\underline c}}\ge \frac{u(\hat x)
  \varphi_D(x)}{e\|\varphi_D\|_\infty}\cdot\frac{\underline
  c}{\la_D+\underline c}.
\end{equation}
In the last inequality we have used an elementary bound
$(1+s)^{1/s}\le e$ {valid for all
$s>0$}.

Furthermore, since $c(x)\ge0$, we have  $\bar c:=\|c\|_\infty$ (cf
\eqref{c-c}) and
\begin{align}
\label{052904-20c}
&
\mathbb
E_x\left[\int_0^{\tau_D\wedge t}e_c(r)(Au-cu)(X_r)\,dr\right]\ge {\rm essinf}_D(Au-cu)\mathbb
  E_x\left[\int_0^{\tau_D\wedge t}e^{-\bar c r}dr\right]\notag
\\
&
\ge
  \frac{1}{\bar c}(1-e^{-\bar c t}) \,{\rm essinf}_D(Au-cu) P_x(\tau_D>t).
\end{align}
Using again \eqref{062904-20}  to estimate $P_x(\tau_D>t)$ from below
and maximizing over $t>0$ we conclude that  
\begin{align}
\label{052904-20a}
&u(\hat x)-u(x)\ge \frac{
  \varphi_D(x)}{\|\varphi_D\|_\infty \bar c}\cdot\frac{\bar
  c /\la_D}{(1+\bar c/\la_D)^{1 +\la_D/\bar c}}{\rm essinf}_D(Au-cu)\\
&
\ge \frac{
  \varphi_D(x)}{e\|\varphi_D\|_\infty }\cdot\frac{1}{\la_D+\bar c}{\rm essinf}_D(Au-cu).\notag
\end{align}
Estimate \eqref{012804-20} follows easily from \eqref{072904-20} and \eqref{052904-20a}.
\qed

\subsection{Proof of Theorem \ref{pee1}}

\label{sec7.2.2}

We start with the estimate 
\eqref{052904-20b}. The first term in the right hand side is bounded
in the same way as in Section \ref{sec7.2.1}. To estimate the second
term we note that, {thanks to finiteness of the moment of the exit time (Proposition \ref{lm010104-20}),}
\begin{equation}
\label{060909-20}
\lim_{t\to+\infty}\mathbb
E_x\left[\int_0^{\tau_D\wedge t}e_c(s)(Au-cu)(X_s)\,ds\right]{\ge R^D_{\bar c}\Big((A-c)u\Big)}.
\end{equation}
 {By Lemma \ref{lm2.im9402} there exist {strictly positive, $m$-a.e.} $\bar \psi, \bar \phi\in B^+_b(D)$
such that 
\begin{equation}
\label{RDa}
R^D_{\bar c} f(x)\ge R^D_{\bar c+1}\bar \psi(x) \int_D R^D_{\bar c+1}f \bar \phi \,dx,\quad x\in
D,\quad f\in B^+_b(D).
\end{equation}
{Thanks to the irreducibility and strong Feller properties of the
resolvent,} see  \eqref{eq.uewo.st} and \eqref{eq.uewo.sf}, we conclude that
$\psi:=R^D_{\bar c+1}\bar \psi$ is strictly positive and continuous in
$D$. By \eqref{eq.uewo.st}, the measure  
$$\nu(A)= \int_D \bar\phi
R^D_{\bar c+1} \mathbf{1}_A dx, \quad A\in {\cal B}(D)
$$
is equivalent to $m$. Thus, $ {\chi}:=d\nu/dx$ is strictly positive in $D$ ($m$-a.e.).
Applying \eqref{RDa} we
obtain 
\begin{equation}
\label{020310-20}
 R^D_{\bar c}\Big((A-c)u\Big)(x)\ge \psi(x)\int_D (Au-cu) \chi dx,\quad x\in D
\end{equation}
and \eqref{012804-20-1} follows.}\qed

\subsection{Proof of Theorem \ref{thm:main3a}}

\label{sec-bony}
Let $\varepsilon_0>0$ be such that $B(\hat x,2\varepsilon_0)\subset
D$. Set $V_\varepsilon=B(\hat x,\varepsilon)$. By  {the Feynman-Kac
formula}, see Proposition \ref{prop010204-20}, for any $\varepsilon\in (0,\varepsilon_0]$,
 \begin{equation}
\label{010204-20dfg1ccv}
u(\hat x)-\E_{\hat x}\left[u(X_{\tau_{V_\varepsilon}})\right]=-\E_{\hat x}\left[\int_0^{\tau_{V_\varepsilon}}(Au)(X_s)\,ds\right].
\end{equation}
Using the Ikeda Watanabe formula, see \cite[Remark 2.46, page 65]{BSW}, we get
\begin{equation}
\label{010293-20dfg1v}
\begin{split}
&\mathbb E_{\hat
  x}\left[\sum_{0<s\le\tau_D}\mathbf{1}_{D}(X_{s-})\mathbf{1}_{\overline
  D^c}(X_s)\right]\\
&
=\mathbb E_{\hat x}\left[\int_{\BR^d}\int_0^{\tau_D}\mathbf{1}_{D}(X_{s-})\mathbf{1}_{\overline D^c}
(y)N(X_{s-},dy-X_{s-})\,ds\right]=0.
\end{split}
\end{equation}
The last equality follows from the fact that $\mathcal S(D)\subset \overline D$.
This implies that $\mathbb P_{\hat x}(X_{\tau_{V_\varepsilon}}\in \overline D)=1$.

Since $\hat x\in D$ is a  global  maximal point of $u$ in $\overline
D$ (by continuity of $u$), from \eqref{010204-20dfg1ccv}, we obtain
\[
 \E_{\hat x}\left[\int_0^{\tau_{V_\varepsilon}}(Au)(X_s)\,ds\right]\le 0,\quad \varepsilon\in (0,\varepsilon_0].
\]
Combining the above with  \eqref{020205-20} (here  $D:=V_\varepsilon$ and $\alpha=0$),
\[
 \left(\essinf_{V_\varepsilon}Au\right)\int_{V_\varepsilon}r_0^{V_\varepsilon}(\hat x,y)\,dy\le  \E_{\hat x}\left[\int_0^{\tau_{V_\varepsilon}}(Au)(X_s)\,ds\right]\le 0,\quad \varepsilon\in (0,\varepsilon_0].
\]
By  \eqref{eq.uewo.st} we have $\int_{V_\varepsilon}r_0^{V_\varepsilon}(\hat x,y)\,dy>0$. Therefore, $\essinf_{V_\varepsilon}Au\le 0,\, \varepsilon\in (0,\varepsilon_0]$,
which proves the assertion of the theorem, cf \eqref{essinf}.
\qed

\subsection{Proof of Theorem \ref{thm:main4a}}
\label{prf-thm5.4}

 {
{Using  Lemma \ref{lm2.im9402} we conclude that
there exists a pair of  functions
$\bar\psi,\bar\phi\in B^+_b(D)$ such that
$\bar\psi>0$, $\bar\phi>0$, $m$ a.e.}
and
\begin{equation}
\label{RDabcbarc}
R^D_{\bar c+1} f(x)\ge R^D_{\bar c+2}\bar \psi(x) \int_D R^D_{\bar c+2} f \bar\phi dx,\quad x\in
D,\quad f\in B^+_b(D).
\end{equation}
Applying this inequality, with $f$ replaced by $R^D_{\bar c}f$, and
using \eqref{010310-20} we get
\begin{equation}
\label{010510-20}
R^D_{\bar c} f(x)\ge R^D_{\bar c+2}\bar \psi(x) \int_D R^D_{\bar c} f  d\nu,\quad x\in
D,\quad f\in B^+_b(D),
\end{equation}
where 
\begin{equation}
\label{nuB}
\nu(B)=\int_D\bar\phi R^D_{\bar c+2}\mathbf{1}_B\,dx, \quad B\in
{\cal B}(D).
\end{equation}
{By \eqref{eq.uewo.st}, $\nu\sim m$. Therefore, we can write
$\chi:D\to(0,+\infty)$ such that
\begin{equation}
\label{nuB1}
\nu(B)=\int_B\chi(x)\,dx, \quad B\in
{\cal B}(D),\quad \mbox{where }\chi(x)=\hat R^D_{\bar c+2}\bar\phi 
\end{equation}
and $\hat R^D_{\bar c+2}$ is defined by \eqref{020205-20d}.
By \eqref{eq.uewo.st} and 
\eqref{eq.uewo.sf} we infer that  $R^D_{\bar c+2}\bar \psi$
is continuous and strictly positive on $D$.}

In the next step, we  show that $R^D_{\bar c} f(x)$ in \eqref{010510-20}
can be replaced by $w$ - any $\bar c$-excessive function with respect to
$(P^D_t)_{t\ge 0}$, i.e. a non-negative function satisfying
\begin{equation}
\label{030510-20}
P^D_tw(x)\le e^{\bar c t}w(x),\quad t\ge0,\,x\in D
\end{equation}
and
\begin{equation}
\label{030510-20a}
\lim_{t\to0+}e^{-\bar c t}P^D_tw(x)=w(x),\quad x\in D.
\end{equation}
More precisely, for any $w$, that is  $\bar c$-excessive,  we have
\begin{equation}
\label{010510-20aa}
w(x)\ge R^D_{\bar c+2}\bar \psi(x) \int_D w  d\nu,\quad x\in
D.
\end{equation}
{Indeed, from \cite[Proposition II.2.6]{bg} we conclude that, there exists a
sequence of non-negative Borel functions $(f_n)_{n\ge 1} $ such that
$\big(R^D_{\bar c}f_n(x)\big)_{n\ge1}$ is monotonne increasing and
\begin{equation}
\label{040510-20}
\lim_{n\to+\infty}R^D_{\bar c}f_n(x) =w(x), \quad x\in D.
\end{equation}
We can write inequality \eqref{010510-20} for each $R^D_{\bar
  c}f_n(x)$. Estimate \eqref{010510-20aa} then follows by an
application of the
monotonne convergence theorem and \eqref{040510-20}.}}

Coming back to the proof of \eqref{830104-20aabb}, it follows from \eqref{010510-20aa},
provided we prove that $u$ is  $\bar c$-excessive. {Indeed for any
$V\Subset D$, by
\eqref{010510-20}, we can write
\begin{equation}
\label{010510-20aa1}
u(x)\ge R^D_{\bar c}u(x)\ge C\int_V\chi udx
\end{equation}
with $C:= \inf_{x\in V}R^D_{\bar c+2}\bar \psi(x) >0$  and $\chi$ is  defined in \eqref{nuB1}.}

Since $u$ is a supersolution to \eqref{eq3.1a}, 
by Proposition \ref{prop010204-20}, we have
\[
u(x)\ge \mathbb E_x\Big[e_c(\tau_D\wedge t)u(X_{\tau_D\wedge t})\Big],\quad t\ge 0,\, x\in D.
\]
Thus, by non-negativity of $u$, 
\[
u(x)\ge  e^{-\bar c t}\mathbb E_x[ u(X_{t})1_{[t<\tau_D]}]=e^{-\bar c t}P^D_tu(x),\, t\ge 0
\]
and \eqref{030510-20} holds. Equality \eqref{030510-20a}  is a
consequence of 
 continuity of $u$, in $\bar D$. This ends the proof of Theorem
 \ref{thm:main4a}.

\qed

\medskip

\begin{remark}
\label{rmk9.3}
 Under the assumptions of Theorem
   \ref{cor.d21.20.1}  (resp. Theorem \ref{cor.d21.20.1a}),
   using estimate \eqref{010510-20aa1} and the conclusion of Proposition
   \ref{prop010310-20}, we can prove that in \eqref{830104-20aabb}, 
 $C:=a{\rm dist}(V, D^c)$ (resp. $\nu:=a\delta_D\,dx$, and $C:={\rm
   dist}(V, D^c)$) for some $a>0$, depending only on $D$.
\end{remark}

\medskip

{
\begin{remark}
\label{rmk9.4}
 It is clear that if $\hat R_\al$ satisfy the strong Feller 
 condition for some $\al>0$, then $\chi\in C(D)$ and is strictly positive. As a result
 we 
 conclude then the usual form of the weak Harnack inequality, i.e. that
  for any $V {\Subset} D$ we can find a constant $C>0$   such that
for any non-negative supersolution $u$   we have
\begin{equation}
\label{830104-20aabbb}
\inf_{x\in V} u(x)\ge C \int_Vu\,dx.
\end{equation}
According to \cite[Theorem 1]{mikulevicius}, a sufficient condition for the aforementioned strong Feller
property of the adjoint resolvent is  that the
coefficients of the operator $\hat A_0$, given by \eqref{eqi.6.30.09},
satisfy the assumptions A1), A2) and A4).
\end{remark}
}



\subsection{Proofs of Theorems  \ref{cor.d21.20.1} and  \ref{cor.d21.20.1a}}
\label{sec7.2.2a}

  {Invoking the argument from the proofs of  Theorem \ref{pee1}   and
    Lemma  \ref{lm2.im9402} we conclude that there exist {strictly positive $m$-a.e.} $\bar\psi_0,\bar\phi_0\in B^+_b(D)$
such that 
\begin{equation}
\label{012804-20-9d}
u(\hat x)-u(x)\ge \frac{\underline
  c\varphi_D(x) u(\hat
x)}{2e
  \|\varphi_D\|_\infty (\la_D+\underline c)}+R^D_{\bar c+1}\bar \psi_0(x)\int_DR^D_{\bar c+1}(Au-cu)\bar\phi_0\,dx,\quad x\in D.
\end{equation}
By \eqref{eq.uewo.st}, measure 
$$\nu(B)=\int_D\bar\phi_0R^D_{\bar
  c+1}\mathbf{1}_B\,dx, \quad B\in {\cal B}(D)
$$ is equivalent to
$m$. Thus, $\phi_0:=d\nu/dx$ is strictly positive and 
\begin{equation}
\label{012804-20-9g}
\int_DR^D_{\bar c+1}(Au-cu)\bar\phi_0\,dx=\int_D(Au-cu)\phi_0\,dx.
\end{equation}
Since $D$ satisfies the exterior ball condition and $R^D_{\bar c+1}$ is
strongly Feller (see \eqref{eq.uewo.sf}), we have $R^D_{\bar c+1}\bar
\psi_0 \in C_0(\overline D)$, see Section \ref{sec7.5}.
Using Proposition \ref{prop010310-20} we conclude that there exists
$a_1>0$ such that   $R^D_{\bar c+1}\bar\psi_0\ge a_1\delta_D$. This
finishes the proof of Theorem  \ref{cor.d21.20.1}.}

Under assumptions of Theorem  \ref{cor.d21.20.1a}, we may  write
\begin{equation}
\label{012804-20-9j}
\int_DR^D_{\al_*}(Au-cu)\bar\phi_0\,dx=\int_D(Au-cu)\hat R^{\hat c,D}_{\al_*}\bar\phi_0\,dx,
\end{equation}
where {$\al_*:=(\bar c+1)\vee (\|\hat c\|_\infty+1)$}.
Invoking again   Proposition \ref{prop010310-20}  we conclude that there exists $a_2>0$ such that $\hat
R^{\hat c,D}_{\al_*}\bar\phi_0\ge a_2\delta_D$ and the conclusion of the
theorem follows.\qed

\subsection{Proof of Theorem \ref{thm013009-20}}

\label{sec-res-dual}

With no loss of generality we may and shall assume that $\hat c\le0$, as
otherwise we could have considered $A-\al$ for a sufficiently large $\al>0$.

 {Suppose that $p>d$. By \cite[Theorem 1.2]{Taira3}, the operators $(A,D(A))$ and  $(\hat A,D(\hat A))$ generate $C_0$-semigroups of contractions on $C_0(\overline D)$, where
\begin{align*}
&D(A)=[u\in W^{2,p}(D)\cap C_0(\overline D): Au\in C_0(\overline D)],\\
&
D(\hat A)=[u\in W^{2,p}(D)\cap C_0(\overline D): \hat Au\in C_0(\overline D)].
\end{align*}
By the standard approximation we conclude  that \eqref{eq.deo.1} holds for $u\in D(A), v\in D(\hat A)$.
Let $(G_\alpha)_{\alpha>0}$, $(\hat G_\alpha)_{\alpha>0}$ denote the
semigroup resolvents generated by $A$, $\hat A$, respectively. Using
the martingale problem we conclude that $G_\alpha=R^D_\alpha$ and $ \hat
G_\alpha=\hat R^{\hat c,D}_\alpha$, where as we recall $R^D_\alpha$, $
 \hat R^{\hat c,D}_\alpha$ are the resolvent corresponding to the
 probability transition semigroups \eqref{PDt} and \eqref{hPDt}.
Thus, 
$$
R^D_\alpha(C_0(\overline D))\subset C_0(\overline D)\cap
W^{2,p}(D)\quad\mbox{ and }\quad \hat R^{\hat
  c,D}_\alpha(C_0(\overline D))\subset C_0(\overline D)\cap W^{2,p}(D)
$$
 for any $p>d$ and $\alpha\ge 0$.
Let $(R^{D}_\alpha)^*$ denote the adjoint operator to $R^D_\alpha$ in $L^2(D;m)$. 
Then for any $w\in C_0(\overline D)$, there exists $\eta\in D(A)$ such that $(-A+\alpha )\eta=w$. 
By using \eqref{eq.deo.1}, for any $u\in C_0(\overline D)$
\begin{align*}
\int_D  w\hat R_\alpha^{\hat c,D}u  \,dx&=\int_D (-A+\alpha )\eta\hat R_\alpha^{\hat c,D}u\,dx\\&=
\int_D \eta (-\hat A+\alpha )\hat R_\alpha^{\hat c,D}u \,dx=\int_D u \eta\,dx.
\end{align*}
On the other hand
\begin{align*}
\int_D u \eta\,dx&=\int_D u R^D_\alpha(-A+\alpha )\eta\,dx
=\int_D(R^{D}_\alpha)^* u (-A+\alpha )\eta \,dx\\&=
\int_D (R^{D}_\alpha)^* u w\,dx=\int_D u  R^{D}_\alpha w\,dx.
\end{align*}
Since $u,w\in C_0(\overline D)$ were arbitrary, we get    \eqref{res-dual}.\qed}

\medskip

\subsection*{Acknowledgements}
{\small  T. Klimsiak is supported by Polish National Science Centre:
  \linebreak Grant No. 2017/25/B/ST1/00878.  T. Komorowski  acknowledges the
support of the  Polish  National Science Centre:
Grant No.  2016/23/B/ST1/00492. Both authors wish to express their
gratitude to   the referee for very careful reading of the manuscript and valuable remarks that 
lead  to significant improvement of the presentation. }


\begin{thebibliography}{32}

\bibitem{AP}
Anulova, S.V.,  Pragarauskas, H.: {\em Weak Markov solutions of stochastic equations.}
 {\em Lithuanian Math. J.} {\bf{17}}, (1977), 141--155 
 

\bibitem{BeSw} Beygmohammadi, M.; Sweers, G. {\em Hopf's boundary point lemma, the Krein-Rutman theorem and a special domain.} Positivity 18 (2014), no. 1, 81-94. 

\bibitem{bil} Billingsley, P., {\em Convergence of probability measures.} Second edition. Wiley Series in Probability and Statistics: Probability and Statistics. A Wiley-Interscience Publication. John Wiley \& Sons, Inc., New York, 1999.

\bibitem{BH}
J. Bliedtner,  W.  Hansen, {\em Potential Theory}, Springer,
Berlin, 1986.

\bibitem{bg} Blumenthal, R. M.; Getoor, R. K. {\em Markov processes and
  potential theory.} Pure and Applied Mathematics, Vol. 29 ,Academic
  Press, New York-London 1968.

\bibitem{BNV} Berestycki, H.; Nirenberg, L.; Varadhan, S. R. S. {\em The
  principal eigenvalue and maximum principle for second-order elliptic
  operators in general domains.} Comm. Pure Appl. Math. 47 (1994),
  no. 1, 47--92.


 \bibitem{boko20} A. Bobrowski,  T. Komorowski,  {\em A quantitative Hopf-type maximum principle for subsolutions of  elliptic PDEs.}  Discrete Contin. Dyn. Syst. Ser. S. Volume {\bf 13}, Number12,  (2020), 3495-3502
doi:10.3934/dcdss.2020248

\bibitem{bony1}  Bony, J.-M.  Probl\'{e}me de Dirichlet et semi-groupe fortement fell\'{e}rien associ\'{e}s a un op\'{e}rateur integro-differentiel. C. R. Acad. Sci. Paris Ser. A-B 265 (1967), A361-A364.






\bibitem{BCP}
Bony, J.-M.; Courr\'{e}ge, P.; Priouret, P.:
 {\em Semi-groupes de Feller sur une vari\'{e}t\'{e} \'{a} bord compacte et probl\'{e}mes aux limites int\'{e}gro-diffeentiels du second ordre donnant lieu au principe du maximum.} (French) Ann. Inst. Fourier (Grenoble) 18 (1968), no. fasc., fasc. 2, 369-521 (1969).

\bibitem{BSW}
B\"ottcher, B., Schilling, R., Wang, J.: {\em L\'evy Matters III. L\'evy-Type Processes: Construction,
Approximation and Sample Path Properties.} Lecture Notes in Mathematics, vol. 2099.
Springer, Cham (2013)


\bibitem{BC} Brezis, H; Cabr\'{e}, X. {\em Some simple nonlinear PDE's without solutions.} Boll. Unione Mat. Ital. Sez. B Artic. Ric. Mat. (8) 1 (1998), no. 2, 223-262. 





 





\bibitem{cz} Chung, K. L.; Zhao, Z. X.
{\em From Brownian motion to Schr\"odinger's equation.}
Grundlehren der Mathematischen Wissenschaften, 312. Springer-Verlag,
Berlin, 1995.



\bibitem{De}
Dembinski, V.:
{\em Regular points, transversal sets, and Dirichlet problem for standard
processes.} {\em Z. Wahrsch. Verw. Gebiete} {\bf 66} (1984), 507--527.

\bibitem{dmo}
 Diaz, J. I.; Morel, J.-M.; Oswald, L.: {\em An elliptic equation with singular nonlinearity. }
 Comm. Partial Differential Equations 12 (1987), no. 12, 1333-1344.


  \bibitem{tweedie}  Down, D.; Meyn, S. P.; Tweedie, R. L. {\em Exponential
    and uniform ergodicity of Markov processes.} Ann. Probab. 23
    (1995), no. 4, 1671-1691. 

\bibitem{BDR}
El Berdan, N.,  Diaz, J.I., Rakotoson, J. M.: {\em The uniform Hopf inequality for discontinuous coefficients and optimal regularity in BMO for singular problems. }
J. Math. Anal. Appl. 437 (2016), no. 1, 350-379.


 \bibitem{ethier-kurtz} Ethier, S. N.; Kurtz, T. G., {\em Markov processes. Characterization and convergence.} Wiley Series in Probability and Mathematical Statistics: Probability and Mathematical Statistics. John Wiley \& Sons, Inc., New York, 1986. 
 
 \bibitem{Foondun}
 Foondun, M.: {\em Harmonic Functions for a Class of Integro-differential operators.} Potential Analysis, {\bf  31}, 21--44 (2009)








\bibitem{gilbarg} Gilbarg, D.; Trudinger, N. S. {\em Elliptic partial differential equations of second order.} Reprint of the 1998 edition. Classics in Mathematics. Springer-Verlag, Berlin, 2001.




\bibitem{hl}
Han, Qing; Lin, Fanghua: {\em Elliptic partial differential equations.} Second edition. Courant Lecture Notes in Mathematics, 1. 
CIMS, New York; American Mathematical Society, Providence, RI, 2011. 
















 
\bibitem{KK20a} {T. Klimsiak,  T. Komorowski} {\em On the maximum principle for
   integro-differential operators of elliptic type - probabilistic
   approach}, arXiv:2102.08890 (2021).
 
 
 \bibitem{LM}
  Lepeltier, J.-P.; Marchal, B. {\em Probl\'{e}me des martingales et \'{e}quations differentielles stochastiques associ\'{e}es a un op\'{e}rateur int\'{e}gro-diff\'{e}rentiel.} Ann. Inst. H. Poincare Sect. B (N.S.) 12 (1976), no. 1, 43--103.
  
  

\bibitem{mikulevicius2}   Mikulevi\v{c}ius, R.; Pragarauskas, H. {\em
    Green measures of It\^o processes.} (Russian) Litovsk. Mat. Sb. 27
  (1987), no. 4, 711-723. English translation: Lithuanian Math. J. 27 (1987), no. 4, 325-334. 

 \bibitem{mikulevicius}   Mikulevi\v{c}ius, R.; Pragarauskas, H. {\em On H\"older continuity of solutions of certain integro-differential equations}. Ann. Acad. Sci. Fenn. Ser. A I Math. 13 (1988), no. 2, 231--238. 
 
 
 
\bibitem{Num}
Nummelin, E.:  {\em General Irreducible Markov Chains and Non-Negative Operator.} Cambridge Univ. Press (1984).







\bibitem{op}
Orsina, L.; Ponce, A. C. {\em Hopf potentials for the Schr\"{o}dinger operator}. Anal. PDE 11 (2018), no. 8, 2015-2047


\bibitem{weinberger}  Protter, M. H.; Weinberger, H. F. {\em Maximum principles in differential equations}. Corrected reprint of the 1967 original. Springer-Verlag, New York, 1984.


\bibitem{schaefer} H. H. Schaefer, {\em Banach lattices and positive operators}, Springer-Verlag, New York, 1974.







\bibitem{Stroock}
Stroock, D.W.: {\em Diffusion processes associated with L\'evy generators.} {\em Probab. Theory Relat.
Fields} {\bf 32}, 209--244 (1975)



\bibitem{Taira2} Taira, K.: 
{\em Boundary Value Problems and Markov Processes.} Lecture Notes in
Math. 1499, 3-d edition, Springer Verlag (2020).

\bibitem{Taira3}
Taira, K. {\em Dirichlet problems with discontinuous coefficients and Feller
semigroups.} Rendiconti del Circolo Matematico di Palermo Series 2 (2020) 69:287-323


\bibitem{Taira5}
Taira, K. {\em Ventcel boundary value problems for elliptic Waldenfels operators. }
Boll. Unione Mat. Ital. 13 (2020), no. 2, 213-256

\bibitem{Taira6}  Taira, K. {\em Semigroups, boundary value problems and Markov processes.} Second edition. Springer Monographs in Mathematics. Springer, Heidelberg, 2014.

\end{thebibliography}
\end{document}